\newtheorem{theorem}{Theorem}[section]
\newtheorem{proposition}[theorem]{Proposition}
\newtheorem{corollary}[theorem]{Corollary}
\newtheorem{lemma}[theorem]{Lemma}
\theoremstyle{definition}
\newtheorem{definition}[theorem]{Definition}
\newtheorem{example}[theorem]{Example}
\newtheorem{axiom}{Axiom}
\numberwithin{equation}{section}
\newtheorem{remark}[theorem]{Remark}
\DeclareMathOperator{\sk}{sk}
\DeclareMathOperator{\Sk}{Sk}
\DeclareMathOperator{\csk}{cosk}
\DeclareMathOperator{\Csk}{Cosk}
\DeclareMathOperator*{\colim}{colim}
\DeclareMathOperator{\Dec}{Dec}
\DeclareMathOperator{\tr}{tr}
\DeclareMathOperator{\pr}{pr}
\newcommand{\too}{\longrightarrow}
\newcommand{\sm}{\mathsf{Smooth}}
\newcommand{\ssm}{\mathsf{sSmooth}}
\newcommand{\st}{\mathsf{Set}}
\newcommand{\sst}{\mathsf{sSet}}
\newcommand{\snst}{\mathsf{s}_{\leq n}\mathsf{Set}}
\newcommand{\C}{\mathsf{C}}
\newcommand{\sC}{\mathsf{sC}}
\newcommand{\snC}{\mathsf{s}_{\leq n}\mathsf{C}}
\newcommand{\tn}{\tau_n}
\newcommand{\sgp}{\mathsf{sGroup}}
\newcommand{\W}{W}
\newcommand{\LW}{\overline{W}}
\newcommand{\spn}{\textup{Spin}(n)}
\newcommand{\strn}{\textup{String}(n)}
\newcommand{\fivebrn}{\textup{Fivebrane}(n)}
\newcommand{\p}{d}
\newcommand{\s}{s}
\newcommand{\G}{\mathcal{G}}
\newcommand{\hooklongrightarrow}{\lhook\joinrel\longrightarrow}
\newcommand{\g}{{\boldsymbol g}}
\newcommand{\h}{{\boldsymbol h}}
\title{Descent for $n$-bundles}
\author{Jesse Wolfson}
\email{wolfson@math.uchicago.edu}
\address{University of Chicago}
\thanks{This work was partially supported by an
  NSF Graduate Research Fellowship under Grant No.\ DGE-0824162, and by
  an NSF Research Training Group in the Mathematical Sciences under
  Grant No.\ DMS-0636646.}
\begin{document}

\begin{abstract}
  Given a Lie group $G$, one constructs a principal $G$-bundle on a
  manifold $X$ by taking a cover $U\rightarrow X$, specifying a
  transition cocycle on the cover, and descending the trivialized
  bundle $U\times G$ along the cover.  We demonstrate the existence of
  an analogous construction for local $n$-bundles for general $n$.  We
  establish analogues for simplicial Lie groups of Moore's results
  on simplicial groups; these imply that bundles for strict Lie
  $n$-groups arise from local $n$-bundles.  Our construction leads
  to simple finite dimensional models of Lie 2-groups such as $\strn$ from cocycle data.
\end{abstract}
\keywords{String 2-group, simplicial manifolds, bundle gerbes.}
\subjclass[2010]{18G30 (Primary), 18G55, 53C08 (Secondary)}

\maketitle

\section{Introduction}
The nerve of a group is a simplicial set satisfying Kan's horn-filling conditions.  Grothendieck observed that the nerve provides an equivalence between the category of groups and the category of reduced Kan simplicial sets whose horns have unique fillers above dimension one. More generally, he showed that the nerve extends to an equivalence between the category of groupoids and the category of Kan simplicial sets whose horns have unique fillers above dimension one. Inspired by this, Duskin \cite{Dus:79} defined an $n$-groupoid to be a Kan simplicial set whose horns have unique fillers above dimension $n$.

In the last decade, Henriques \cite{Hen:08}, Pridham \cite{Pri:13} and others have begun the study of Lie $n$-groupoids: simplicial manifolds whose horn-filling maps are surjective submersions in all dimensions, and isomorphisms above dimension $n$.  Examples are common. (See also Getzler \cite{Get:09}.)  Lie $0$-groupoids are precisely smooth manifolds.  Lie 1-groups are nerves of Lie groups.  Abelian Lie $n$-groups are equivalent to chain complexes of abelian Lie groups supported between degrees $0$ and $n-1$.

Simplicial Lie groups whose underlying simplicial set is an $(n-1)$-groupoid give rise to Lie $n$-groups, by the $\LW$-construction (Section 6). We call this special class of Lie $n$-groups \emph{strict Lie $n$-groups}.

Much of the theory of principal bundles for Lie groups generalizes naturally to principal bundles for strict Lie $n$-groups.  In Theorems \ref{thm:desc} and \ref{thm:wbar}, we show that the construction of a fiber bundle from a cocycle on a cover has a close analogue for cocycles for strict Lie $n$-groups.  As an application, we show how this allows for the construction of finite dimensional Lie 2-groups, such as $\strn$, from cohomological data.  The method works equally well for $n>2$.

\subsection*{Outline}
We develop our results within a category $\C$ which has a terminal object $\ast$ and a subcategory of covers.  We require the subcategory of covers to be stable under pullback,  to contain the maps $X\to\ast$ for every object $X$, to satisfy an axiom of right-cancelation, and to be contained within the class of effective epimorphisms. Our motivating example is the category of finite dimensional smooth manifolds, with surjective submersions as covers.  Other examples include Banach manifolds, analytic manifolds over a complete normed field, and of course sets.

In Section \ref{sec:stacks}, we recall the definitions and basic properties of higher stacks. This section parallels the discussion in Behrend and Getzler \cite{BeG:13}, the main difference being that in that paper, the category $\C$ is assumed to possess finite limits. Here, we work with categories of manifolds, so this assumption does not hold. ( Conversely, they do not impose the assumption that the maps $X\to\ast$ are covers, which fails in the setting of not-necessarily smooth analytic spaces.)

In Section \ref{sec:highhom}, we show that the collection of $k$-morphisms in a Lie $n$-groupoid forms a Lie $(n-k)$-groupoid (Theorem \ref{thm:kmor}).  In Section \ref{sec:stric}, we apply this to study Duskin's $n$-strictification functor $\tn$. (For a discussion in the absolute case, see \cite[Section~3.1, Example~5]{Gle:82}, \cite[Definition~3.5]{Hen:08} or \cite[Section~2]{Get:09}.)  In the Lie setting, the functor $\tn$ does not always exist.  However, when it does, it provides a partial left adjoint to the inclusion of $n$-stacks into the category of $\infty$-stacks.  We recall the relevant properties and give a necessary and sufficient criterion for existence.  In Section \ref{sec:descent}, we impose the additional axiom that quotients of regular equivalence relations in $\C$ exist. (This was first established by Godemont for smooth manifolds, or analytic manifolds over a complete normed field.) Under this assumption, we introduce local $n$-bundles and prove our main result on descent (Theorem \ref{thm:desc}).

In Section \ref{sec:slie}, we extend results of Moore on simplicial groups in $\st$ to simplicial Lie groups. These results provide a ready supply of examples satisfying the hypotheses of Theorem \ref{thm:desc}. We conclude in Section \ref{sec:string}, by applying our results to construct finite dimensional Lie 2-groups from cocycle data.  We describe the resulting model of $\strn$ and compare it to the model constructed by Schommer-Pries \cite{Sch:11}.

\subsection*{Acknowledgments}
The results of Section \ref{sec:slie} represent joint work with E. Getzler.  The author thanks him and J. Batson for helpful comments on several drafts. The author also thanks the anonymous referee for numerous helpful remarks which substantially improved the exposition.

\section{Higher Stacks}\label{sec:stacks}

We work in a category $\C$ with a subcategory of ``covers''.

\begin{axiom}\label{axiom:fib}
  The category $\C$ has a terminal object $\ast$, and the map
  $X\to\ast$ is a cover for every object $X\in\C$.
\end{axiom}

\begin{axiom}\label{axiom:top}
  Pullbacks of covers along arbitrary maps exist and are covers.
\end{axiom}

\begin{axiom}\label{axiom:fg+g}
  If $g$ and $f$ are composable maps such that $fg$ and $g$ are
  covers, then $f$ is also a cover.
\end{axiom}

If it exists, the \emph{kernel pair} of a map $f:X\to Y$ in a category $\C$ is
the pair of parallel arrows
\begin{equation*}
  \begin{xy}
    \morphism/{@{>}@<3pt>}/<600,0>[X\times_YX`X;]
    \morphism/{@{>}@<-3pt>}/<600,0>[X\times_YX`X;]
  \end{xy}
\end{equation*}
The map $f:X\to Y$ is an \emph{effective epimorphism} if $f$ is
the coequalizer of this pair:
\begin{equation*}
  \begin{xy}
    \morphism/{@{>}@<3pt>}/<600,0>[X\times_YX`X;]
    \morphism/{@{>}@<-3pt>}/<600,0>[X\times_YX`X;]
    \morphism(600,0)<500,0>[X`Y;f] .
  \end{xy}
\end{equation*}

\begin{axiom}\label{axiom:subcan}
  Covers are effective epimorphisms.
\end{axiom}

Axioms \ref{axiom:fib} and \ref{axiom:top} ensure that isomorphisms
are covers, that $\C$ has finite products, that projections along
factors of products are covers, and that covers form a pre-topology on
$\C$. Axiom \ref{axiom:fg+g}, which we borrow from Behrend and
Getzler \cite{BeG:13}, ensures that being a cover is a local property:
it is preserved \emph{and} reflected under pullback along
covers. Likewise, Axiom \ref{axiom:subcan} ensures that being an
isomorphism is a local property.

We will be interested in the following examples of categories
  with covers satisfying these assumptions:
\begin{enumerate}
\item $\st$, the category of sets, with surjections as covers;
\item $\sm$, the category of finite-dimensional smooth manifolds, with
  surjective submersions as covers;
\item the category of Banach manifolds, with surjective submersions as
  covers (see \cite{Hen:08});
\item the category of analytic manifolds over a complete normed field,
  with surjective submersions as covers (see \cite[Chapter
  III]{Ser:64}).
\end{enumerate}

Denote by $\sC$ the category of simplicial objects in $\C$. In
particular, we have the category of simplicial sets $\sst$. The
category $\C$ embeds fully faithfully in $\sC$ as the category of constant simplicial diagrams.  We do not distinguish between the category $\C$ and its essential image under this embedding.

Let $\Delta^k$ denote the standard $k$-simplex, that is, the simplicial set $\Delta^k=\Delta(-,[k])$. A simplicial set $S$ is the colimit of its simplices
\begin{equation*}
    \colim_{\Delta^k\to S} \Delta^k\cong S
\end{equation*}

\begin{definition}
    Let $X_\bullet$ be a simplicial object in $\C$. Let $S$ be a simplicial set.  Denote by $\hom(S,X)$ the limit
    \begin{equation*}
        \hom(S,X):=\lim_{\Delta^k\to S} X_k
    \end{equation*}
\end{definition}
Note that such limits do not exist in general.

By a Lie group, we will mean a group internal to the category $\C$,
that is, an object $G$ with product $m:G\times G\to G$, inverse
$i:G\to G$, and identity $e:\ast\to G$, satisfying the usual
axioms. We may associate to a Lie group its \emph{nerve} $N_\bullet G\in\sC$,
which is the simplicial object
\begin{equation*}
  N_kG = G^k .
\end{equation*}
The face maps $\p_i:G^k\to G^{k-1}$ are defined for $i=0$ and $i=k$ by
projection away from the first and last factor respectively, and for
$0<i<k$ by
\begin{equation*}
  \p_i = G^{i-1} \times m \times G^{k-1} .
\end{equation*}
The degeneracy maps $\s_i:G^k\to G^{k+1}$ are defined by
\begin{equation*}
  \s_i = G^i \times e \times G^{k-i} .
\end{equation*}
In fact, the above construction does not use the existence of an
inverse, and works if $G$ is only a monoid in $\C$.

We will use the following simplicial subsets of $\Delta^k$:
\begin{enumerate}
\item the boundary $\partial\Delta^k$ of $\Delta^k$;
\item the $i^{\textit{th}}$ horn
  $\Lambda^k_i\subset\partial\Delta^k$, obtained from
  $\partial\Delta^k$ by omitting its $i^{\textit{th}}$ face.
\end{enumerate}

\begin{definition}
  Let $f:X\to Y$ be a map in $\sC$.  The \emph{matching object} $M_k(f)$ is the limit
  \begin{equation*}
    \hom(\partial\Delta^k,X) \times_{\hom(\partial\Delta^k,Y)} Y_k .
  \end{equation*}
  Denote by $\mu_k(f)$ the induced map from $X_k$ to
  $M_k(f)$.

  The object of \emph{relative $\Lambda^k_i$-horns} $\Lambda^k_i(f)$ is the limit
  \begin{equation*}
    \hom(\Lambda^k_i,X) \times_{\hom(\Lambda^k_i,Y)} Y_k .
  \end{equation*}
  Denote by $\lambda^k_i(f)$ the induced map from
  $X_k$ to $\Lambda^k_i(f)$.
\end{definition}

A section of $M_k(f)$ is a $k$-simplex of $Y$ together with a lift of
its boundary to $X$, and $\mu_k(f)$ measures the extent to which these
relative spheres are filled by $k$-simplices of $X$. Similarly, a section of $\Lambda^k_i(f)$ is a $k$-simplex of $Y$ together with a
lift of the $\Lambda^k_i$-horn to $X$, and $\lambda^k_i(f)$ measures
the extent to which these relative horns are filled by $k$-simplices
of $X$.

In the absolute case, where the target $Y$ of the simplicial map $f$
is the terminal object $\ast$, we write $M_k(X)$ and
$\Lambda^k_i(X)$ instead of $M_k(f)$ and $\Lambda^k_i(f)$, and
similarly for the induced maps $\mu_k(X)$ and $\lambda^k_i(X)$.

As an example, we have
\begin{equation*}
  \Lambda^k_i(N_\bullet G) \cong
  \begin{cases}
    \ast , & k=0,1 , \\
    G^k , & k>1 .
  \end{cases}
\end{equation*}
This is easily seen if $0<i<k$; for $i=0$ or $i=k$, the proof requires
the existence of the inverse for $G$. In fact, the isomorphisms
$\Lambda^k_i(N_\bullet G)\cong N_kG$, $k>1$, together with the
condition $N_0G\cong\ast$, characterize the nerves of groups, and
indeed give an alternative axiomatization of the theory of groups.

Grothendieck extended this observation, omitting the condition
$N_0G\cong\ast$.
\begin{definition}
  A Lie groupoid $\G$ in $\C$ is an internal groupoid in $\C$, with
  morphisms $\G_1$ and objects $\G_0$, source and target maps
  $s,t:\G_1\to\G_0$, multiplication
  \begin{equation*}
    m\colon\G_1\times_{\G_0}^{t,s}\G_1\to \G_1,
  \end{equation*}
  unit $e:\G_0\to\G_1$, and inverse $i:\G_1\to\G_1$, such that $s$ and $t$ are covers.
\end{definition}
The \emph{nerve} $N_\bullet\G$ of a groupoid is the simplicial object
$N_\bullet \G\in\sC$,
\begin{equation*}
  N_k\G =
  \begin{cases}
    \G_0 , & k=0 , \\
    \G_1 , & k=1 , \\
    \underbrace{\G_1\times_{\G_0}^{t,s}\cdots\times_{\G_0}^{t,s}\G_1}_k , & k>1 .
  \end{cases}
\end{equation*}
On 1-simplices, the face maps $\p_0$ and $\p_1$ correspond to the target $t$ and source $s$.  The degeneracy $\s_0\colon\G_0\to\G_1$ corresponds to the unit.  On $k$-simplices for $k>1$, the face maps
\begin{equation*}
    \p_i:\underbrace{\G_1\times_{\G_0}^{t,s}\cdots\times_{\G_0}^{t,s}\G_1}_k\to\underbrace{\G_1\times_{\G_0}^{t,s}\cdots\times_{\G_0}^{t,s}\G_1}_{k-1}
\end{equation*}
are defined for $i=0$ and $i=k$ by projection away from the first and last factor respectively, and for
$0<i<k$ by
\begin{equation*}
  \p_i = \underbrace{\G_1\times_{\G_0}^{t,s}\cdots\times_{\G_0}^{t,s}\G_1}_{i-1}\times m \times \underbrace{\G_1\times_{\G_0}^{t,s}\cdots\times_{\G_0}^{t,s}\G_1}_{k-i} .
\end{equation*}
The degeneracy maps
\begin{equation*}
    \s_i:\underbrace{\G_1\times_{\G_0}^{t,s}\cdots\times_{\G_0}^{t,s}\G_1}_k\to\underbrace{\G_1\times_{\G_0}^{t,s}\cdots\times_{\G_0}^{t,s}\G_1}_{k+1}
\end{equation*}
are defined by
\begin{equation*}
  \s_i = \underbrace{\G_1\times_{\G_0}^{t,s}\cdots\times_{\G_0}^{t,s}\G_1}_{i-1} \times e \times \underbrace{\G_1\times_{\G_0}^{t,s}\cdots\times_{\G_0}^{t,s}\G_1}_{k-i} .
\end{equation*}
Grothendieck's observation, generalized to Lie groupoids from his
setting of discrete groupoids, is as follows.
\begin{proposition}[Grothendieck]
  A simplicial object $X_\bullet\in\sC$ is isomorphic to the nerve
  of a Lie groupoid if and only if the horn-filler maps
  \begin{equation*}
    \lambda^k_i(X) : X_k \to \Lambda^k_i(X)
  \end{equation*}
  are covers for $k=1$, and isomorphisms for $k>1$.

  In particular, a simplicial object $X_\bullet\in\sC$ is isomorphic
  to the nerve of a Lie group if and only if the above conditions are
  fulfilled and $X_0=\ast$.
\end{proposition}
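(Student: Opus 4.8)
The plan is to establish the groupoid statement in full and then read off the group statement by imposing $X_0\cong\ast$, treating the ``only if'' and ``if'' directions separately.

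\emph{Only if.} Let $\G$ be a Lie groupoid. First I would identify the relative horn objects of $N_\bullet\G$. For $k=1$ one has $\Lambda^1_0(N\G)\cong\Lambda^1_1(N\G)\cong\G_0$, and the maps $\lambda^1_0,\lambda^1_1\colon\G_1\to\G_0$ are the source $s$ and target $t$, hence covers by the definition of a Lie groupoid. For $k\geq2$ I would show that $\lambda^k_i(N\G)\colon N_k\G\to\Lambda^k_i(N\G)$ is an isomorphism, with $\Lambda^k_i(N\G)$ realized as an iterated fiber product of the covers $s,t$, so that the limit exists. A $\Lambda^k_i$-horn in $N\G$ is a compatible family of strings of composable morphisms: when $0<i<k$, or whenever $k\geq3$, the horn already contains the spine $0\to 1\to\cdots\to k$, which determines the filling $k$-simplex uniquely; for the remaining outer horns $\Lambda^2_0$ and $\Lambda^2_2$ one reconstructs the missing edge using the inverse (e.g.\ $\lambda^2_0$ sends $(g,h)$ to $(g,hg)$, with inverse $(a,b)\mapsto(a,ba^{-1})$). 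In each case the inverse map is built from the projections, $m$ and $i$, hence is a morphism of $\C$; checking compatibility with the simplicial identities is routine.

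\emph{If.} Suppose $X_\bullet$ satisfies the stated conditions. I would define a candidate Lie groupoid $\G$ with $\G_0:=X_0$, $\G_1:=X_1$, source $s:=d_1=\lambda^1_0(X)$, target $t:=d_0=\lambda^1_1(X)$, and unit $e:=s_0$; both $s$ and $t$ are covers by hypothesis. The object of composable pairs $\G_1\times_{\G_0}^{t,s}\G_1$ is precisely the horn object $\Lambda^2_1(X)$, whose existence is part of the hypothesis, and composition is $m:=d_1\circ\lambda^2_1(X)^{-1}$. The inverse $i\colon\G_1\to\G_1$ is obtained from the $\Lambda^2_0$- and $\Lambda^2_2$-fillers of the horns $(\mathrm{id}_{s(g)},g)$ and $(g,\mathrm{id}_{t(g)})$, which produce left and right inverses of $g$, assembled as morphisms of $\C$ via $\lambda^2_0(X)^{-1}$ and $\lambda^2_2(X)^{-1}$. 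The groupoid axioms then follow by the classical coherence-via-unique-fillers arguments, read internally: the unit laws are witnessed by the degeneracies $s_0,s_1$ being the unique fillers of the relevant $\Lambda^2_1$-horns; associativity is forced by uniqueness of a filling $3$-simplex for a suitable inner horn $\Lambda^3_i$ built from the composites $hg$ and $kh$; and the inverse laws — in particular that the left and right inverses agree — come from uniqueness of $\Lambda^2$- and $\Lambda^3$-fillers. Each such assertion is an equality of morphisms of $\C$ because every horn object invoked exists and the filler maps $\lambda^k_i(X)$ are honest isomorphisms.

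\emph{Recovering $X$, the group case, and the obstacle.} It remains to build a simplicial isomorphism $N_\bullet\G\cong X_\bullet$. By construction $N_0\G=X_0$, $N_1\G=X_1$, $N_2\G=\Lambda^2_1(X)$ with $\lambda^2_1(N\G)$ the identity, and the triple $(\mathrm{id},\mathrm{id},\lambda^2_1(X)^{-1})$ is an isomorphism of $2$-truncations respecting all faces and degeneracies, directly from the definitions of $m$, $e$, $s$, $t$. For $k\geq3$ the object $\Lambda^k_i(-)$ is a limit over the category of proper faces, so it depends only on simplices of degree $<k$; since $\lambda^k_i$ is an isomorphism for $k\geq2$ on both $X$ and (by the ``only if'' part) $N\G$, one extends the $2$-truncation isomorphism to all degrees by induction, setting $\phi_k:=\lambda^k_i(X)^{-1}\circ\Lambda^k_i(\phi_{<k})\circ\lambda^k_i(N\G)$ and checking that each $\phi_k$ is an isomorphism commuting with faces and degeneracies. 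Finally, if $X_0\equiv\ast$ then $\G_0\cong\ast$, so $\G$ is a Lie group, and conversely $N_0G\cong\ast$ for every Lie group $G$. I expect the main obstacle to be the coherence step of the ``if'' direction: one must carry out the $3$-simplex manipulations for associativity and the inverse laws entirely \emph{internally} to $\C$, taking care that every auxiliary horn object used is among those the hypothesis guarantees to exist, and that ``unique filler'' is invoked only through the given isomorphisms $\lambda^k_i(X)$, since $\C$ need not have general limits.
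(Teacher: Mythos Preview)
The paper does not actually prove this proposition: it is stated without proof and attributed to Grothendieck, with only the brief remark preceding it that $\Lambda^k_i(N_\bullet G)\cong G^k$ for $k>1$ in the group case, ``easily seen if $0<i<k$; for $i=0$ or $i=k$, the proof requires the existence of the inverse for $G$.'' So there is nothing to compare against beyond that hint.

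Your proposal is the standard argument and is essentially correct. A few comments. In the ``only if'' direction your spine heuristic is fine, but note that for the nerve this is literally true by definition: $N_k\G$ \emph{is} the object of spines, so the content is that the restriction map from $\hom(\Delta^k,N\G)$ to $\hom(\Lambda^k_i,N\G)$ is an isomorphism, which you can see by exhibiting the inverse explicitly from projections, $m$, and (for the outer $2$-horns) $i$; it is worth saying that these horn objects exist as iterated pullbacks of the covers $s,t$ (Axiom~2). In the ``if'' direction your definitions of $s,t,e,m,i$ are right, and your identification of the obstacle is apt: the associativity and inverse-law verifications must be phrased as equalities of morphisms obtained by composing the structure maps with the given isomorphisms $\lambda^k_i(X)^{-1}$, never by chasing elements. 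For the inductive reconstruction $N_\bullet\G\cong X_\bullet$ in degrees $k\ge3$, your formula $\phi_k:=\lambda^k_i(X)^{-1}\circ\Lambda^k_i(\phi_{<k})\circ\lambda^k_i(N\G)$ is correct, but you should also argue that $\phi_k$ is independent of the choice of $i$ (equivalently, that it commutes with \emph{all} faces, not just those appearing in a fixed $\Lambda^k_i$); this follows because both $\lambda^k_i$'s are isomorphisms for every $i$, so the induced map on matching objects $M_k$ already determines $\phi_k$.
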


Maps between Lie groupoids are in bijection with simplicial
maps between their nerves.  As a result, the full subcategory of $\sC$ consisting of
those simplicial objects satisfying the above conditions is equivalent
to the category of Lie groupoids.

Motivated by Grothendieck's observation, Duskin \cite{Dus:79}
introduced a notion of $n$-groupoid valid in any topos.  Duskin's notion was adapted
by Henriques \cite{Hen:08} (see also \cite{Get:09}) to
cover higher Lie groupoids.
\begin{definition}
  \label{defi:ngpd}
  Let $n\in\mathbb{N}\cup\{\infty\}$. A \emph{Lie $n$-groupoid} is a
  simplicial object $X_\bullet\in\sC$ such that for all $k>0$ and $0\le i\le
  k$, the limit $\Lambda^k_i(X)$ exists in $\C$, the map
  \begin{equation*}
    \lambda^k_i(X)\colon X_k \to \Lambda^k_i(X)
  \end{equation*}
  is a cover, and it is an isomorphism for $k>n$.

  A \emph{Lie $n$-group} $X_\bullet$ is a Lie $n$-groupoid such that
  $X_0=\ast$.
\end{definition}

As an example, a Lie 0-groupoid is the same as an object of $\C$ (viewed as a constant simplicial diagram).

\begin{definition}[Verdier]
  \label{def:hyp}
  Let $n\in\mathbb{N}\cup\{\infty\}$. A map $f:X_\bullet\to
  Y_\bullet$ of Lie $\infty$-groupoids is an \emph{$n$-hypercover}
  if, for all $k\ge0$, the limit $M_k(f)$ exists in $\C$, the
  map
  \begin{equation*}
    \mu_k(f)\colon X_k \to M_k(f)
  \end{equation*}
  is a cover for all $k$, and it is an isomorphism for $k\geq n$.
\end{definition}

Hypercovers in $\sst$ are the same as trivial fibrations, that is,
Kan fibrations which are also weak homotopy equivalences. Hypercovers play
much the same role in the theory of Lie $n$-groupoids. We refer
to an $\infty$-hypercover simply as a ``hypercover.''

A $0$-hypercover is an isomorphism, while a $1$-hypercover of a Lie
$0$-groupoid is isomorphic to the nerve of the cover $f_0:X_0\to Y_0$. In
other words,
\begin{equation*}
  X_k \cong \underbrace{X_0\times_{Y_0}\times\cdots\times_{Y_0}X_0}_{k+1}
\end{equation*}

\begin{definition}
    An \emph{augmentation} of a simplicial object $X_\bullet\in\sC$ is a
    simplicial map to an object $Y\in\C\subset\sC$.
\end{definition}
This amounts to the same thing as a map $\varepsilon:X_0\too Y$
that renders the diagram
\begin{equation*}
  \begin{xy}
    \morphism|a|/@{>}@<3pt>/<400,0>[X_1`X_0;\p_0]
    \morphism|b|/@{>}@<-3pt>/<400,0>[X_1`X_0;\p_1]
    \morphism(400,0)<400,0>[X_0`Y;\varepsilon]
  \end{xy}
\end{equation*}
commutative.

\begin{definition}
    The \emph{orbit space} $\pi_0(X)$ of a Lie $\infty$-groupoid
    $X_\bullet$ is a cover
    \begin{equation*}
        X_0\too\pi_0(X)
    \end{equation*}
    which coequalizes the fork
    \begin{equation*}
      \begin{xy}
        \morphism|a|/@{>}@<3pt>/<400,0>[X_1`X_0;\p_0]
        \morphism|b|/@{>}@<-3pt>/<400,0>[X_1`X_0;\p_1]
        \morphism(400,0)<400,0>[X_0`\pi_0(X);]
      \end{xy}
    \end{equation*}
\end{definition}
In other words, for any augmentation $\varepsilon:X_0\too Y$ of $X_\bullet$, there is an induced map
\begin{equation*}
    \begin{xy}
      \morphism|a|/@{>}@<3pt>/<400,0>[X_1`X_0;\p_0]
      \morphism|b|/@{>}@<-3pt>/<400,0>[X_1`X_0;\p_1]
      \morphism(400,0)<400,0>[X_0`\pi_0(X);]
      \morphism(800,0)/{.>}/<0,-400>[\pi_0(X)`Y;]
      \morphism(400,0)<400,-400>[X_0`Y;]
    \end{xy}
\end{equation*}
Furthermore, if $\varepsilon$ is a cover, then so is the induced map
from $\pi_0(X)$ to $Y$, by Axiom~\ref{axiom:fg+g}.

It is characteristic of the theory of Lie $\infty$-groupoids that
the orbit space $\pi_0(X)$ need not exist. One case in which the
orbit space of $X_\bullet$ exists, however, is when $X_\bullet$ admits an augmentation
$\varepsilon:X_\bullet\to Y$ which is a hypercover.
\begin{proposition}\label{prop:hyppi0}
  \mbox{}
  \begin{enumerate}
  \item An augmentation $\varepsilon:X_\bullet\to Y$ is an
    $n$-hypercover if and only if the maps $\varepsilon:X_0\to Y$ and
    $\mu_1(\varepsilon):X_1\to X_0\times_YX_0$ are covers, and the
    maps
    \begin{equation*}
        \mu_k(X)\colon X_k\to M_k(X)
    \end{equation*}
    are covers for $k>1$ and isomorphisms for $k\ge n$.
  \item If the augmentation $\varepsilon:X_\bullet\to Y$ is a
    hypercover, then $\pi_0(X)\cong Y$.
  \end{enumerate}
\end{proposition}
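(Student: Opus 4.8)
The plan is to prove (1) by computing the matching objects of an augmentation directly, and then to deduce (2) from the effective-epimorphism axiom. The key point for (1) is that for a \emph{constant} simplicial object $Y$ and a connected simplicial set $S$, the limit $\hom(S,Y)=\lim_{\Delta^j\to S}Y_j$ is the limit of the constant diagram with value $Y$ over the category of simplices of $S$; since that category is connected, this limit is $Y$ itself. Applying this with $S=\partial\Delta^k$ --- which is empty for $k=0$, a disjoint union $\Delta^0\sqcup\Delta^0$ for $k=1$, and connected (indeed $|\partial\Delta^k|\simeq S^{k-1}$) for $k\ge2$ --- one obtains
\begin{equation*}
  M_k(\varepsilon)\cong
  \begin{cases}
    Y,& k=0,\\
    X_0\times_YX_0,& k=1,\\
    M_k(X),& k\ge2,
  \end{cases}
\end{equation*}
where, under these identifications, $\mu_0(\varepsilon)$ is $\varepsilon:X_0\to Y$ and $\mu_k(\varepsilon)$ is $\mu_k(X)$ for $k\ge2$. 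One checks in passing that the relevant limits exist under the hypotheses: for $k=1$ it is a pullback of a cover, and for $k\ge2$ it is inherited from the Lie $\infty$-groupoid structure on $X_\bullet$. The requirements of Definition~\ref{def:hyp}, that each $\mu_k(\varepsilon)$ be a cover and an isomorphism once $k\ge n$, now translate termwise into exactly the stated conditions.

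For (2), apply (1) with $n=\infty$: the map $\varepsilon:X_0\to Y$ is a cover and $\mu_1(\varepsilon):X_1\to X_0\times_YX_0$ is a cover. Being an augmentation, $\varepsilon$ coequalizes $(\p_0,\p_1):X_1\rightrightarrows X_0$, so only the universal property remains. Let $\pr_0,\pr_1:X_0\times_YX_0\rightrightarrows X_0$ be the kernel pair of $\varepsilon$; then $\p_i=\pr_i\circ\mu_1(\varepsilon)$, and since $\mu_1(\varepsilon)$ is a cover, hence an epimorphism by Axiom~\ref{axiom:subcan}, a morphism out of $X_0$ coequalizes $(\p_0,\p_1)$ if and only if it coequalizes $(\pr_0,\pr_1)$. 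Hence the coequalizer of $(\p_0,\p_1)$ agrees with the coequalizer of the kernel pair of $\varepsilon$, which by Axiom~\ref{axiom:subcan} is $Y$, via the cover $\varepsilon:X_0\to Y$. This identifies $Y$ with $\pi_0(X)$.

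The step I expect to be the main obstacle is the matching-object computation in (1): since $\hom(S,X)$ need not exist in general, part of the content is that the limits in question do exist, and the identification $\hom(\partial\Delta^k,Y)\cong Y$ for constant $Y$ rests on connectedness of the category of simplices of $\partial\Delta^k$ when $k\ge2$. Once the matching objects are pinned down, part (1) is a termwise comparison with Definition~\ref{def:hyp}, and part (2) follows formally from Axiom~\ref{axiom:subcan}.
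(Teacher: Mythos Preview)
Your proof is correct and follows the same route as the paper's (very terse) argument: identify $M_k(\varepsilon)$ explicitly as $Y$, $X_0\times_Y X_0$, and $M_k(X)$ in the three ranges, then read off (1) termwise; for (2), use that $\mu_1(\varepsilon)$ is an epimorphism to reduce the coequalizer of $(d_0,d_1)$ to that of the kernel pair of $\varepsilon$, and invoke Axiom~\ref{axiom:subcan}.

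One small correction: the existence of $M_k(X)$ for $k\ge2$ is \emph{not} a consequence of $X_\bullet$ being a Lie $\infty$-groupoid --- $\partial\Delta^k$ is not collapsible, so Lemma~\ref{lemma:psrep} does not apply. Rather, existence is built into the hypotheses on both sides of the biconditional: if $\varepsilon$ is an $n$-hypercover then $M_k(\varepsilon)$ exists by definition and your identification gives $M_k(X)\cong M_k(\varepsilon)$; conversely, asserting that $\mu_k(X)$ is a cover presupposes that $M_k(X)$ exists. So your argument goes through, but the parenthetical about where existence comes from should be revised.
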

\begin{proof}
  If $\varepsilon:X_\bullet\to Y$ is an augmentation, we have
  \begin{equation*}
    M_k(\varepsilon) =
    \begin{cases}
      Y , & k=0 , \\
      X_0\times_YX_0 , & k=1 , \\
      M_k(X) , & k>1 .
    \end{cases}
  \end{equation*}
  The first part follows by inspection.

  The second part is a restatement of Axiom~\ref{axiom:subcan}.
\end{proof}

Let $\Delta_{\le n}\subseteq\Delta$ be the full subcategory with
objects $\{[m] \mid m\le n\}$. An \emph{$n$-truncated} simplicial
object is a functor
\begin{equation*}
  X_{\le n} : \Delta_{\leq n}^\circ \to \C .
\end{equation*}
Denote by $\snC$ the category of $n$-truncated simplicial objects in
$\C$. Restriction along $\Delta_{\leq n}\hookrightarrow\Delta$ induces
the functor of \emph{$n$-truncation}:
\begin{equation*}
  \tr_n : \sC \to \snC .
\end{equation*}
When $S$ is a simplicial set of dimension less than or equal to $n$,
we abuse notation and write $S$ for $\tr_nS$.

When $\C$ has finite limits, $n$-truncation $\tr_n:\sC\to\snC$ admits
a right-adjoint $\csk_n:\snC\to\sC$, called the $n$-coskeleton. The
composition
\begin{equation*}
  \csk_n\circ\tr_n : \sC \to \sC
\end{equation*}
is denoted $\Csk_n$. When $\C$ does not possess finite limits, the
functor $\Csk_n$ is only partially defined.

In the category of simplicial sets, there is also a left-adjoint
$\sk_n:\snst\to\sst$ to $n$-truncation, called the $n$-skeleton. The
composition
\begin{equation*}
  \sk_n\circ\tr_n : \sst \to \sst
\end{equation*}
is denoted $\Sk_n$.

Special cases of the next two lemmas first appeared in
\cite{DHI:04}. Let $S\hookrightarrow T$ be a monomorphism of finite
simplicial sets.
\begin{lemma}
  \label{lemma:phrep}
  Suppose that $S$ is $n$-dimensional. Let $Y_\bullet\in\sC$ be a
  simplicial object such that the limit $\hom(T,Y)$ exists. Let $f:\tr_nX_\bullet\to\tr_nY_\bullet$ be a map in
  $\snC$ such that the matching object $M_k(f)$ exists for all
  $k\leq n$, and the map
  \begin{equation*}
    \mu_k(f) : X_k \to M_k(f)
  \end{equation*}
  is a cover for all $k\le n$.  Then the limit
  \begin{equation*}
    \hom(S,X)\times_{\hom(S,Y)}\hom(T,Y)
  \end{equation*}
  exists.
\end{lemma}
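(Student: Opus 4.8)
The plan is to induct on the dimension of $S$, reducing along the skeletal filtration $\emptyset = \Sk_{-1}S \hookrightarrow \Sk_0 S \hookrightarrow \cdots \hookrightarrow \Sk_n S = S$. The base case is $S = \emptyset$, where $\hom(S,X) = \ast$ and $\hom(S,Y) = \ast$, so the asserted limit is just $\hom(T,Y)$, which exists by hypothesis. For the inductive step, suppose the result holds for $(m-1)$-dimensional subcomplexes and $S$ has dimension $m \le n$. Write $S'$ for the $(m-1)$-skeleton of $S$, and let $B$ be the set of nondegenerate $m$-simplices of $S$. Then $S$ is obtained from $S'$ by the pushout
\begin{equation*}
  \begin{xy}
    \morphism<700,0>[\coprod_B \partial\Delta^m`S';]
    \morphism(0,0)|l|<0,-400>[\coprod_B \partial\Delta^m`\coprod_B \Delta^m;]
    \morphism(0,-400)<700,0>[\coprod_B \Delta^m`S;]
    \morphism(700,0)|r|<0,-400>[S'`S;]
  \end{xy}
\end{equation*}
in simplicial sets. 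Applying $\hom(-,Y)$ (which turns colimits of simplicial sets into limits) and $\hom(-,X)$ gives, for any $Z$, a pullback description $\hom(S,Z) \cong \hom(S',Z)\times_{\prod_B \hom(\partial\Delta^m,Z)} \prod_B Z_m$ whenever the relevant limits exist.

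The key manipulation is then a rewriting of the target limit. I would replace $T$ by $T' := T \cup_S S'$... more precisely, argue as follows. First apply the inductive hypothesis to the pair $S' \hookrightarrow T$ (note $S'$ is $(m-1)$-dimensional, and $\hom(T,Y)$ exists), obtaining that $\hom(S',X)\times_{\hom(S',Y)}\hom(T,Y)$ exists; call this object $P'$. Now I claim the desired limit is the pullback
\begin{equation*}
  P' \times_{\prod_B \hom(\partial\Delta^m, Y)\times_{\hom(\partial\Delta^m,Y)} \text{(stuff)}} \cdots,
\end{equation*}
which I will organize cleanly as: $\hom(S,X)\times_{\hom(S,Y)}\hom(T,Y) \cong \bigl(\prod_B X_m\bigr) \times_{\prod_B M_m(f)} P'$, where the map $P' \to \prod_B M_m(f)$ is built from the restriction $P' \to \hom(S',X)\times_{\hom(S',Y)}\hom(S',Y)\cdots$ followed by the $B$-fold restriction along $\partial\Delta^m \hookrightarrow S'$ into $\prod_B \bigl(\hom(\partial\Delta^m,X)\times_{\hom(\partial\Delta^m,Y)}\hom(\partial\Delta^m,Y)\bigr)$, and noting this last target receives a compatible map from $\prod_B Y_m$ via the boundary-restriction in $Y$, exhibiting $\prod_B M_m(f)$. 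The point is that $M_m(f) = \hom(\partial\Delta^m,X)\times_{\hom(\partial\Delta^m,Y)}Y_m$ exists by hypothesis for $m \le n$, so $\prod_B M_m(f)$ exists, and $\mu_m(f): X_m \to M_m(f)$ is a cover, so $\prod_B X_m \to \prod_B M_m(f)$ is a cover; hence its pullback along $P' \to \prod_B M_m(f)$ exists by Axiom~\ref{axiom:top}. A diagram chase (pasting of pullback squares, using that $S = S' \cup_{\coprod_B\partial\Delta^m}\coprod_B\Delta^m$ and that $\hom$ sends this to a pullback) then identifies that pullback with $\hom(S,X)\times_{\hom(S,Y)}\hom(T,Y)$.

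The main obstacle is bookkeeping: $\C$ lacks general finite limits, so at each stage I must verify that every intermediate limit I form is one that is guaranteed to exist — either because it is a pullback of a cover (Axiom~\ref{axiom:top}), a finite product (available by Axioms~\ref{axiom:fib},~\ref{axiom:top}), or an instance of the inductive hypothesis or the standing hypothesis that $\hom(T,Y)$ exists. The delicate point is that $M_k(f)$ is only assumed to exist for $k \le n$, which is exactly why the hypothesis requires $\dim S \le n$: the induction never needs matching objects in degrees above $n$. I would present the diagram chase by pasting two pullback squares — one expressing $\hom(S,-)$ via $\hom(S',-)$ and the $\partial\Delta^m \hookrightarrow \Delta^m$ data, the other being the fiber product with $\hom(T,Y)$ — and invoke the pasting law for pullbacks, taking care that all four corners of each square are objects already known to exist. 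The existence of the composite limit then follows formally, with the cover axioms supplying the one genuinely non-formal input.
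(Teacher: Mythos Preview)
Your proposal is correct and follows essentially the same approach as the paper: both build the limit inductively along a cell-attachment filtration of $S$, using at each stage that the new cell contributes a pullback along the cover $\mu_m(f)\colon X_m\to M_m(f)$, which exists by Axiom~\ref{axiom:top}. The only cosmetic difference is that the paper attaches one nondegenerate simplex at a time (so each step is a single pullback square over $M_{n_\ell}(f)$), whereas you batch all $m$-cells together via the skeletal filtration and pull back along the finite product $\prod_B \mu_m(f)$; since $S$ is finite and finite products of covers are covers, this makes no substantive difference.
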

\begin{proof}
  Filter the simplicial set $S$
  \begin{equation*}
    \emptyset = S_0 \hookrightarrow \ldots \hookrightarrow S_N = S
  \end{equation*}
  where
  \begin{equation*}
    S_\ell \cong S_{\ell-1} \cup_{\partial\Delta^{n_\ell}} \Delta^{n_\ell} .
  \end{equation*}
  Here, $n_\ell\leq n$ for all $\ell$.

  Suppose that the limit
  \begin{equation*}
    Z_j = \hom(S_j,X)\times_{\hom(S_j,Y)}\hom(T,Y) .
  \end{equation*}
  exists for $j<\ell$. This is true for $\ell=1$, since
  $Z_0\cong\hom(T,Y)$. The limit $Z_\ell$ is the pullback
  \begin{equation*}
    \begin{xy}
      \Square[Z_\ell`X_{n_\ell}`Z_{\ell-1}`M_{n_\ell}(f);``\mu_{n_\ell}(f)`]
    \end{xy}
  \end{equation*}
  This pullback exists because $\mu_{n_\ell}(f)$ is a cover.
\end{proof}

\begin{lemma}
  \label{lemma:hrep}
  Let $f:X_\bullet\to Y_\bullet$ be a hypercover such
  that the limit
  \begin{equation*}
    \hom(S,X)\times_{\hom(S,Y)}\hom(T,Y)
  \end{equation*}
  exists. Then the limit $\hom(T,X)$ exists
  and the map
  \begin{equation*}
    \hom(T,X) \to \hom(S,X)\times_{\hom(S,Y)}\hom(T,Y)
  \end{equation*}
  is a cover.
\end{lemma}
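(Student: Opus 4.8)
The plan is to run essentially the same filtration argument used to prove Lemma~\ref{lemma:phrep}, but now exploiting the fact that $f$ is a hypercover (so \emph{all} the maps $\mu_k(f)$ are covers, with no dimension restriction and no need to truncate). First I would recall that, since $S\hookrightarrow T$ is a monomorphism of finite simplicial sets, we may filter $T$ relative to $S$:
\begin{equation*}
  S = T_0 \hookrightarrow T_1 \hookrightarrow \cdots \hookrightarrow T_N = T , \qquad
  T_\ell \cong T_{\ell-1}\cup_{\partial\Delta^{n_\ell}}\Delta^{n_\ell} ,
\end{equation*}
attaching the nondegenerate simplices of $T$ not already in $S$ one at a time, in order of nondecreasing dimension. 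The claim will be proved by induction on $\ell$: at each stage both the limit $\hom(T_\ell,X)\times_{\hom(T_\ell,Y)}\hom(T,Y)$ exists and the comparison map to $\hom(S,X)\times_{\hom(S,Y)}\hom(T,Y)$ is a cover. The base case $\ell=0$ is precisely the hypothesis of the lemma. Once the induction is complete, the top stage $\ell = N$ gives $\hom(T,X)\times_{\hom(T,Y)}\hom(T,Y)\cong\hom(T,X)$, establishing both the existence of $\hom(T,X)$ and that the map to $\hom(S,X)\times_{\hom(S,Y)}\hom(T,Y)$ is a cover.

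For the inductive step, suppose $Z_{\ell-1} := \hom(T_{\ell-1},X)\times_{\hom(T_{\ell-1},Y)}\hom(T,Y)$ exists and $Z_{\ell-1}\to\hom(S,X)\times_{\hom(S,Y)}\hom(T,Y)$ is a cover. Because $T_\ell$ is obtained from $T_{\ell-1}$ by gluing on a single $\Delta^{n_\ell}$ along $\partial\Delta^{n_\ell}$, the functor $\hom(-,X)$ turns this pushout into a pullback, and likewise for $\hom(-,Y)$; combining these, $Z_\ell$ is the pullback
\begin{equation*}
  \begin{xy}
    \Square[Z_\ell`X_{n_\ell}`Z_{\ell-1}`M_{n_\ell}(f);``\mu_{n_\ell}(f)`]
  \end{xy}
\end{equation*}
(using $M_{n_\ell}(f) = \hom(\partial\Delta^{n_\ell},X)\times_{\hom(\partial\Delta^{n_\ell},Y)}Y_{n_\ell}$, and the fact that the relevant $\hom$'s built from the already-constructed $Z_{\ell-1}$ exist). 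Since $f$ is a hypercover, $\mu_{n_\ell}(f)$ is a cover, so by Axiom~\ref{axiom:top} this pullback exists and the map $Z_\ell\to Z_{\ell-1}$ is a cover; composing with the inductive cover $Z_{\ell-1}\to\hom(S,X)\times_{\hom(S,Y)}\hom(T,Y)$ and using that covers are closed under composition finishes the step.

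The main obstacle is bookkeeping rather than conceptual: one must be careful that at each stage the limits invoked genuinely exist, i.e. that the comparison $\hom(\partial\Delta^{n_\ell},X)\times_{\hom(\partial\Delta^{n_\ell},Y)}\hom(T,Y)$ can be identified with $Z_{\ell-1}$ (this is where the choice of a filtration built \emph{relative to $S$} and ordered by dimension matters — each $\partial\Delta^{n_\ell}$ lands inside $T_{\ell-1}$), and that the pushout $T_{\ell-1}\cup_{\partial\Delta^{n_\ell}}\Delta^{n_\ell}$ is genuinely along the boundary inclusion and not a more complicated attachment. There is also a mild point that the induction must carry \emph{both} the existence statement and the cover statement simultaneously, since one feeds the other; but once the filtration is set up correctly, each step is just one application of the pullback-stability of covers (Axiom~\ref{axiom:top}) together with their closure under composition, exactly as in Lemma~\ref{lemma:phrep}.
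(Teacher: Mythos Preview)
Your proposal is correct and follows essentially the same approach as the paper: filter $T$ relative to $S$ by attaching one nondegenerate simplex at a time, and at each stage realize the intermediate limit as a pullback along the cover $\mu_{n_\ell}(f)$. The paper's proof is slightly terser (it does not spell out the closure-under-composition step or the bookkeeping remarks you include), but the filtration, the pullback square, and the inductive structure are identical.
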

\begin{proof}
  Filter the simplicial set $T$
  \begin{equation*}
    S = S_0 \hookrightarrow \ldots \hookrightarrow S_N = T
  \end{equation*}
  where
  \begin{equation*}
    S_\ell \cong S_{\ell-1} \cup_{\partial\Delta^{n_\ell}} \Delta^{n_\ell} .
  \end{equation*}

  Suppose that the limit
  \begin{equation*}
    Z_j = \hom(S_j,X)\times_{\hom(S_j,Y)}\hom(T,Y) .
  \end{equation*}
  exists for $j<\ell$, and that the map $Z_j\to Z_0$ is a
  cover. The limit
  \begin{equation*}
    Z_0=\hom(S,X)\times_{\hom(S,Y)}\hom(T,Y)
  \end{equation*}
  exists by hypothesis. The limit $Z_\ell$ is the pullback
  \begin{equation*}
    \begin{xy}
      \Square[Z_\ell`X_{n_\ell}`Z_{\ell-1}`M_{n_\ell}(f);``\mu_{n_\ell}(f)`]
    \end{xy}
  \end{equation*}
  This pullback exists because $\mu_{n_\ell}(f)$ is a cover.

  We conclude that the limit $Z_N=\hom(T,X)$ exists, and that the
  morphism $Z_N\to Z_0$ is a cover.
\end{proof}

\begin{theorem}
  \label{prop:hk2}\mbox{}
  \begin{enumerate}
  \item The composition of two $n$-hypercovers is an $n$-hypercover.
  \item The pullback of an $n$-hypercover along a map of Lie
    $\infty$-groupoids exists in $\sC$, and is an $n$-hypercover.
  \end{enumerate}
\end{theorem}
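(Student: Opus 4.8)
The plan is to prove both statements by reducing to the two preceding lemmas, which let us build up $\hom$-objects one nondegenerate simplex at a time. The essential point in both cases is to verify the matching-object conditions of Definition~\ref{def:hyp}, so the real content is a calculation of matching objects for composites and for pullbacks, using the gluing-type Lemmas~\ref{lemma:phrep} and \ref{lemma:hrep} to guarantee the relevant limits exist along the way.

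For part (1), let $f:X_\bullet\to Y_\bullet$ and $g:Y_\bullet\to Z_\bullet$ be $n$-hypercovers and set $h=gf$. I would first observe that for each $k$, the matching object $M_k(h)$ fits into an iterated pullback: writing $M_k(h)=\hom(\partial\Delta^k,X)\times_{\hom(\partial\Delta^k,Z)}Z_k$, one factors this through $\hom(\partial\Delta^k,Y)$, obtaining
\begin{equation*}
  M_k(h)\cong M_k(f)\times_{\hom(\partial\Delta^k,Y)}\bigl(\hom(\partial\Delta^k,Y)\times_{\hom(\partial\Delta^k,Z)}Z_k\bigr)=M_k(f)\times_{\hom(\partial\Delta^k,Y)}M_k(g).
\end{equation*}
To make sense of this one needs $\hom(\partial\Delta^k,X)$ and $\hom(\partial\Delta^k,Y)$ to exist; since $\partial\Delta^k$ is $(k-1)$-dimensional, this follows inductively from Lemma~\ref{lemma:hrep} applied to $f$ and $g$ with $S=\emptyset$ (or, in the relative form, to the pair $\emptyset\hookrightarrow\partial\Delta^k$ using Lemma~\ref{lemma:phrep} to produce the base). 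Then $\mu_k(h)$ factors as $X_k\xrightarrow{\mu_k(f)}M_k(f)\to M_k(h)$, where the second map is the pullback of $\mu_k(g):Y_k\to M_k(g)$ along $M_k(f)\to\hom(\partial\Delta^k,Y)$. By Axiom~\ref{axiom:top} the second map is a cover, and for $k\ge n$ it is an isomorphism since $\mu_k(g)$ is; composing with $\mu_k(f)$, which is a cover for all $k$ and an isomorphism for $k\ge n$, gives the claim, using Axiom~\ref{axiom:top} once more for composability of covers. One should also check the hypercover condition that $h_0:X_0\to Z_0$ is a cover (composite of covers) — covered by the same reasoning at level $0$.

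For part (2), let $f:X_\bullet\to Y_\bullet$ be an $n$-hypercover and $\varphi:W_\bullet\to Y_\bullet$ any map of Lie $\infty$-groupoids. I would define the candidate pullback levelwise by $P_k:=W_k\times_{Y_k}X_k$, the existence of which needs justification: the relevant limit is $\hom(\Delta^k,W)\times_{\hom(\Delta^k,Y)}\hom(\Delta^k,X)$, which exists by Lemma~\ref{lemma:hrep} applied to $f$ with the pair $\partial\Delta^k\hookrightarrow\Delta^k$, once one knows $\hom(\partial\Delta^k,X)\times_{\hom(\partial\Delta^k,Y)}\hom(\Delta^k,Y)$ exists — and the latter is handled by Lemma~\ref{lemma:phrep} (using that $W$, hence the composite giving $\hom(\Delta^k,Y)$, has all matching maps covers is not needed; one only uses that $\hom(\Delta^k,Y)\cong Y_k$ exists trivially). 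Having constructed $P_\bullet$, the standard argument identifies $M_k(\pi)$, for $\pi:P_\bullet\to W_\bullet$ the projection, with $W_k\times_{Y_k}M_k(f)$ via a pasting of pullback squares, so that $\mu_k(\pi)$ is the base change of $\mu_k(f):X_k\to M_k(f)$ along $M_k(f)\to Y_k$ pulled back over $W_k$; Axiom~\ref{axiom:top} then gives that $\mu_k(\pi)$ is a cover for all $k$ and an isomorphism for $k\ge n$, and that the limit exists. The condition that $\pi_0:P_0\to W_0$ is a cover is again a base change of the cover $f_0:X_0\to Y_0$.

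The main obstacle I anticipate is bookkeeping around the \emph{existence} of the various $\hom$-limits and matching objects, rather than the cover/isomorphism claims themselves, which are formal consequences of Axioms~\ref{axiom:top} and \ref{axiom:fg+g} once the pullback squares are in place. Because $\C$ lacks finite limits, one cannot simply assert that $M_k(h)$ or $P_k$ exists; each must be produced by an induction over a filtration of a finite simplicial set as in Lemmas~\ref{lemma:phrep} and \ref{lemma:hrep}, and one has to be careful that at each stage the hypotheses of those lemmas (that the already-constructed stage maps down by a cover, and that the base $\hom(T,Y)$-type limit exists) are genuinely available. Organizing this as a single induction on skeleta — simultaneously establishing existence of $\hom(\Sk_m T, -)$-objects and that the transition maps are covers — should make both parts fall out cleanly.
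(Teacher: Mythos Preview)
Your approach is the paper's approach, and part~(1) matches it closely: the paper uses precisely the pullback square you describe (with $M_k(g)\to M_k(fg)$ the base change of $\mu_k(f)$), together with an induction on $k$ invoking Lemma~\ref{lemma:phrep} to produce $M_k(fg)$ at each stage.

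For part~(2) there is a minor organizational point. You propose to first build $P_k=W_k\times_{Y_k}X_k$ and then identify $M_k(\pi)\cong W_k\times_{Y_k}M_k(f)$. The paper runs this in the opposite order: assuming $\mu_j(g^*f)$ is a cover for $j<k$, it applies Lemma~\ref{lemma:phrep} to the map $g^*f$ itself to obtain $M_k(g^*f)$, and then \emph{defines} $g^*X_k$ as the pullback of $\mu_k(f)$ along $M_k(g^*f)\to M_k(f)$. This gives existence of the pulled-back simplices and the cover property of $\mu_k(g^*f)$ simultaneously. Your route works too, but the step ``identify $M_k(\pi)$ with $W_k\times_{Y_k}M_k(f)$'' conceals the same existence issue you flag at the end: neither $M_k(f)\to Y_k$ nor $W_k\to Y_k$ is known to be a cover, so that fiber product cannot simply be formed; you must in any case produce $M_k(\pi)$ by applying Lemma~\ref{lemma:phrep} inductively to $\pi$, which is exactly the paper's move. (Also, your citation of Lemma~\ref{lemma:hrep} with $\partial\Delta^k\hookrightarrow\Delta^k$ only recovers that $\mu_k(f)$ is a cover; to get that $f_k\colon X_k\to Y_k$ is a cover, take $\emptyset\hookrightarrow\Delta^k$.)
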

\begin{proof}
  Consider a composable pair of $n$-hypercovers
  \begin{equation*}
    \begin{xy}
      \morphism<400,0>[X_\bullet`Y_\bullet;g]
      \morphism(400,0)<400,0>[Y_\bullet`Z_\bullet;f]
    \end{xy}
  \end{equation*}
  Suppose that the matching object $M_j(fg)$ exists and that the
  map $\mu_j(fg)$ is a cover for $j<k$. This is certainly the case for
  $k=1$, since $M_0(fg)\cong Z_0$, and $\mu_0(fg)=f_0g_0$ is the
  composition of the two covers $f_0$ and $g_0$. Lemma
  \ref{lemma:phrep} now shows that the matching object $M_k(fg)$ exists.  The square in the commuting diagram
  \begin{equation*}
    \begin{xy}
      \qtriangle<600,500>[X_k`M_k(g)`M_k(fg);\mu_k(g)`\mu_k(fg)`]
      \Square(600,0)[M_k(g)`Y_k`M_k(fg)`M_k(f);``\mu_k(f)`]
    \end{xy}
  \end{equation*}
  is a pullback. Since $f$ and $g$ are $n$-hypercovers, we see that
  $\mu_k(fg)$ is a (composition of) cover(s) for all $k$, and an
  isomorphism if $k>n$.

  We turn to the second statement. Consider an $n$-hypercover
  $f:X_\bullet\to Z_\bullet$ and a map $g:Y_\bullet\to
  Z_\bullet$ of Lie $\infty$-groupoids. Suppose that the limits
  $g^*X_j$ and $M_j(g^*f)$ exist and that the maps
  $\mu_j(g^*f)$ are covers for $j<k$. Lemma \ref{lemma:phrep} shows
  that the matching object $M_k(g^*f)$ exists. The limit $g^\ast X_k$ is the pullback
  \begin{equation*}
    \begin{xy}
      \Square[g^*X_k`X_k`M_k(g^*f)`M_k(f);`\mu_k(g^*f)`\mu_k(f)`]
    \end{xy}
  \end{equation*}
  The map $\mu_k(f)$ is a cover for all $k$ because
  $f$ is an $n$-hypercover. This shows that the pullback $g^*X_k$ exists, that the map $\mu_k(g^*f)$ is a cover for all $k$, and that it
  is an isomorphism for $k\ge n$.
\end{proof}

There is also a relative version of the notion of a Lie $n$-groupoid,
modeled on the definition of a Kan fibration in the theory of
simplicial sets.
\begin{definition}\label{defi:nst}
  Let $n\in\mathbb{N}\cup\{\infty\}$. A map $f:X_\bullet\to
  Y_\bullet$ of Lie $\infty$-groupoids is an \emph{$n$-stack} if for
  all $k>0$ and $0\le i\le k$, the limit $\Lambda^k_i(f)$ exists, the map
  \begin{equation*}
    \lambda^k_i(f) : X_k \to \Lambda^k_i(f)
  \end{equation*}
  is a cover, and it is an isomorphism if $k>n$.
\end{definition}

There are analogues of Lemmas \ref{lemma:phrep} and \ref{lemma:hrep}
for $n$-stacks, due to Henriques \cite{Hen:08}, but only under certain
additional conditions on the simplicial sets $S$ and $T$.
\begin{definition}
  An inclusion of finite simplicial sets $S\hookrightarrow T$ is an
  \emph{expansion} if it can be written as a composition
  \begin{equation*}
    S = S_0 \hookrightarrow \cdots \hookrightarrow S_N = T
  \end{equation*}
  where
  \begin{equation*}
    S_\ell \cong S_{\ell-1} \cup_{\Lambda^{n_\ell}_{i_\ell}} \Delta^{n_\ell} .
  \end{equation*}
  A finite simplicial set $S$ is \emph{collapsible} if the inclusion
  of some, and hence any, vertex is an expansion.
\end{definition}

Let $S\hookrightarrow T$ be a monomorphism of finite
simplicial sets.
\begin{lemma}
  \label{lemma:psrep}
  Suppose that $S$ is $n$-dimensional and collapsible. Let
  $Y_\bullet\in\sC$ be a simplicial object such that the limit
  $\hom(T,Y)$ exists and the restriction to any vertex
  \begin{equation*}
    \hom(T,Y) \to Y_0
  \end{equation*}
  is a cover. Let $f\colon\tr_nX_\bullet\to\tr_nY_\bullet$ be a map in
  $\snC$ such that the limit $\Lambda^k_i(f)$ exists
  for all $0<k\leq n$ and $0\le i\le n$, and the map
  \begin{equation*}
    \lambda^k_i(f) : X_k \to \Lambda^k_i(f)
  \end{equation*}
  is a cover for all $0<k\le n$ and $0\le i\le n$.  Then the limit
  \begin{equation*}
    \hom(S,X)\times_{\hom(S,Y)}\hom(T,Y)
  \end{equation*}
  exists.
\end{lemma}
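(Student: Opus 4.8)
The plan is to run the argument of Lemma~\ref{lemma:phrep} with the filtration of $S$ by boundary attachments replaced by a filtration by horn attachments, which collapsibility makes available, and with the base case treated separately. Since $S$ is collapsible, the inclusion of some vertex $v$ is an expansion, so I fix a filtration
\begin{equation*}
  \{v\} = S_0 \hookrightarrow \cdots \hookrightarrow S_N = S, \qquad S_\ell \cong S_{\ell-1}\cup_{\Lambda^{n_\ell}_{i_\ell}}\Delta^{n_\ell}.
\end{equation*}
Because $S$ is $n$-dimensional, every attached cell has dimension $0<n_\ell\le n$, and $0\le i_\ell\le n_\ell\le n$. As before, set $Z_j = \hom(S_j,X)\times_{\hom(S_j,Y)}\hom(T,Y)$, so that $Z_N$ is the limit whose existence is in question, and induct on $j$.

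For the base case, $S_0$ is a single vertex, so $\hom(S_0,X)\cong X_0$ and $\hom(S_0,Y)\cong Y_0$, and hence $Z_0\cong X_0\times_{Y_0}\hom(T,Y)$, the fibre product being formed over the restriction $\hom(T,Y)\to Y_0$ along $v\hookrightarrow S\hookrightarrow T$. This is precisely where the hypothesis that $\hom(T,Y)\to Y_0$ is a cover enters: it lets us conclude, by Axiom~\ref{axiom:top}, that $Z_0$ exists. For the inductive step, assume $Z_{\ell-1}$ exists. Applying $\hom(-,X)$ and $\hom(-,Y)$ to the pushout square defining $S_\ell$ gives $\hom(S_\ell,X)\cong\hom(S_{\ell-1},X)\times_{\hom(\Lambda^{n_\ell}_{i_\ell},X)}X_{n_\ell}$ and likewise for $Y$; substituting these into the defining fibre product of $Z_\ell$ and reassociating the iterated fibre products (using $\Lambda^{n_\ell}_{i_\ell}(f)=\hom(\Lambda^{n_\ell}_{i_\ell},X)\times_{\hom(\Lambda^{n_\ell}_{i_\ell},Y)}Y_{n_\ell}$, together with the map $\hom(T,Y)\to Y_{n_\ell}$ obtained by restricting along $\Delta^{n_\ell}\hookrightarrow S_\ell\hookrightarrow T$) identifies $Z_\ell$ with the pullback
\begin{equation*}
  \begin{xy}
    \Square[Z_\ell`X_{n_\ell}`Z_{\ell-1}`\Lambda^{n_\ell}_{i_\ell}(f);``\lambda^{n_\ell}_{i_\ell}(f)`]
  \end{xy}
\end{equation*}
Since $0<n_\ell\le n$, the horn-filling map $\lambda^{n_\ell}_{i_\ell}(f)$ is a cover by hypothesis, so this pullback exists by Axiom~\ref{axiom:top}; hence $Z_\ell$, and in the end $Z_N$, exists.

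The step I expect to demand the most care is the identification of $Z_\ell$ with the displayed pullback: it is a purely formal but slightly fiddly manipulation of iterated limits, and one must keep track of how the attaching map $\Lambda^{n_\ell}_{i_\ell}\to S_{\ell-1}$ is compatible with the inclusions $S_{\ell-1}\hookrightarrow S\hookrightarrow T$, so that the extra factor $\hom(T,Y)$ is carried along consistently --- exactly as in Lemma~\ref{lemma:phrep}, with $\partial\Delta^{n_\ell}$ there playing the role of $\Lambda^{n_\ell}_{i_\ell}$ here. Beyond this bookkeeping no new idea is needed; the content is entirely in the two cover hypotheses --- on $\hom(T,Y)\to Y_0$ for the base, and on the maps $\lambda^{n_\ell}_{i_\ell}(f)$ for the inductive step --- together with stability of covers under pullback (Axiom~\ref{axiom:top}). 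Note that collapsibility, rather than a weaker filtration property, is what makes this work: horn attachments cannot build anything out of $\emptyset$, so the induction must begin at a vertex, which is exactly what the hypothesis on $\hom(T,Y)\to Y_0$ is tailored to accommodate.
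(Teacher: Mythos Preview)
Your proof is correct and follows essentially the same approach as the paper's own proof: filter $S$ by horn attachments starting from a vertex, set $Z_j=\hom(S_j,X)\times_{\hom(S_j,Y)}\hom(T,Y)$, handle the base case $Z_0\cong X_0\times_{Y_0}\hom(T,Y)$ using the cover hypothesis on $\hom(T,Y)\to Y_0$, and obtain each $Z_\ell$ as a pullback of $\lambda^{n_\ell}_{i_\ell}(f)$. Your write-up is in fact more explicit about the base case and the limit reshuffling than the paper, which simply asserts the pullback square and says the base case holds ``by the hypotheses on $\hom(T,Y)$''.
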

\begin{proof}
  Filter the simplicial set $S$
  \begin{equation*}
    \Delta^0 = S_0 \hookrightarrow \ldots \hookrightarrow S_N = S
  \end{equation*}
  where
  \begin{equation*}
    S_\ell \cong S_{\ell-1} \cup_{\Lambda^{n_\ell}_{i_\ell}}
    \Delta^{n_\ell} .
  \end{equation*}
  Here, $n_\ell\leq n$ for all $\ell$.

  Suppose that the limit
  \begin{equation*}
    Z_j = \hom(S_j,X)\times_{\hom(S_j,Y)}\hom(T,Y) .
  \end{equation*}
  exists for $j<\ell$. This is true for $\ell=1$, by the
  hypotheses on $\hom(T,Y)$. We have the pullback diagram
  \begin{equation*}
    \begin{xy}
      \Square[Z_\ell`X_{n_\ell}`Z_{\ell-1}`\Lambda^{n_\ell}_{i_\ell}(f);%
      ``\lambda^{n_\ell}_{i_\ell}(f)`]
    \end{xy}
  \end{equation*}
  This pullback exists because $\lambda^{n_\ell}_{i_\ell}(f)$ is a
  cover.
\end{proof}

\begin{lemma}
  \label{lemma:srep}
  Let $f:X_\bullet\to Y_\bullet$ be an $\infty$-stack such that
  the limit
  \begin{equation*}
    \hom(S,X)\times_{\hom(S,Y)}\hom(T,Y)
  \end{equation*}
  exists. Suppose that the inclusion $S\hookrightarrow T$ is
  an expansion. Then the limit $\hom(T,X)$ exists, and
  the map
  \begin{equation*}
    \hom(T,X) \to \hom(S,X)\times_{\hom(S,Y)}\hom(T,Y)
  \end{equation*}
  is a cover.
\end{lemma}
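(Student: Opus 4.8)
The plan is to mimic the proof of Lemma~\ref{lemma:hrep}, replacing the filtration by attachments of $\partial\Delta^{n_\ell}\hookrightarrow\Delta^{n_\ell}$ with the filtration by attachments of horns $\Lambda^{n_\ell}_{i_\ell}\hookrightarrow\Delta^{n_\ell}$ supplied by the hypothesis that $S\hookrightarrow T$ is an expansion. Concretely, write
\begin{equation*}
  S = S_0 \hookrightarrow \cdots \hookrightarrow S_N = T , \qquad
  S_\ell \cong S_{\ell-1} \cup_{\Lambda^{n_\ell}_{i_\ell}} \Delta^{n_\ell} ,
\end{equation*}
and set
\begin{equation*}
  Z_j = \hom(S_j,X)\times_{\hom(S_j,Y)}\hom(T,Y) .
\end{equation*}
The base case $Z_0 = \hom(S,X)\times_{\hom(S,Y)}\hom(T,Y)$ exists by hypothesis, and I claim inductively that each $Z_\ell$ exists and that the map $Z_\ell\to Z_{\ell-1}$ (hence $Z_\ell\to Z_0$) is a cover.

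The inductive step rests on identifying $Z_\ell$ as the pullback
\begin{equation*}
  \begin{xy}
    \Square[Z_\ell`X_{n_\ell}`Z_{\ell-1}`\Lambda^{n_\ell}_{i_\ell}(f);``\lambda^{n_\ell}_{i_\ell}(f)`]
  \end{xy}
\end{equation*}
The right-hand vertical map $\lambda^{n_\ell}_{i_\ell}(f)$ is a cover because $f$ is an $\infty$-stack, so this pullback exists (by Axiom~\ref{axiom:top}) and the induced map $Z_\ell\to Z_{\ell-1}$ is a cover; composing with the inductive hypothesis, $Z_\ell\to Z_0$ is a cover. At the end of the filtration $Z_N = \hom(T,X)$, so the limit $\hom(T,X)$ exists and $\hom(T,X)\to Z_0$ is a cover, which is the assertion.

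The one point requiring genuine care — the analogue of the step that is routine in Lemma~\ref{lemma:hrep} but more delicate here — is the justification of the pullback square, i.e.\ that gluing $\Delta^{n_\ell}$ onto $S_{\ell-1}$ along the horn $\Lambda^{n_\ell}_{i_\ell}$ corresponds, after applying $\hom(-,X)$ relative to $\hom(-,Y)$, to the pullback of $\lambda^{n_\ell}_{i_\ell}(f)$ along the evident map $Z_{\ell-1}\to\Lambda^{n_\ell}_{i_\ell}(f)$. This follows from the pushout $S_\ell \cong S_{\ell-1}\cup_{\Lambda^{n_\ell}_{i_\ell}}\Delta^{n_\ell}$ in $\sst$ together with the fact that $\hom(-,X)$ and $\hom(-,Y)$ send finite colimits of simplicial sets to limits (when those limits exist), so that $\hom(S_\ell,X)\cong \hom(S_{\ell-1},X)\times_{\hom(\Lambda^{n_\ell}_{i_\ell},X)}\hom(\Delta^{n_\ell},X)$ and similarly for $Y$; since $\hom(\Delta^{n_\ell},X)=X_{n_\ell}$ and $\hom(\Delta^{n_\ell},X)\times_{\hom(\Delta^{n_\ell},Y)}\hom(\Lambda^{n_\ell}_{i_\ell},Y) = \Lambda^{n_\ell}_{i_\ell}(f)$, rearranging the iterated fiber products (using that all the relevant limits have been shown to exist in earlier stages) yields the displayed square. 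The subtlety is purely bookkeeping of which limits are known to exist at each stage, so that one never forms a pullback whose existence has not yet been guaranteed; the expansion hypothesis is exactly what makes every attaching map a horn inclusion, so that covers of $\lambda$-type rather than $\mu$-type are the only ones invoked.
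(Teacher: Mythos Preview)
Your proof is correct and follows essentially the same inductive argument as the paper: filter $T$ by horn attachments, set $Z_j=\hom(S_j,X)\times_{\hom(S_j,Y)}\hom(T,Y)$, and realize each $Z_\ell$ as the pullback of $\lambda^{n_\ell}_{i_\ell}(f)$ along $Z_{\ell-1}\to\Lambda^{n_\ell}_{i_\ell}(f)$. One minor slip in your side justification: you wrote $\hom(\Delta^{n_\ell},X)\times_{\hom(\Delta^{n_\ell},Y)}\hom(\Lambda^{n_\ell}_{i_\ell},Y)$ for $\Lambda^{n_\ell}_{i_\ell}(f)$, but the paper's definition is $\hom(\Lambda^{n_\ell}_{i_\ell},X)\times_{\hom(\Lambda^{n_\ell}_{i_\ell},Y)}Y_{n_\ell}$; the pullback square you display is nonetheless the correct one.
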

\begin{proof}
  Filter the simplicial set $T$
  \begin{equation*}
    S = S_0 \hookrightarrow \ldots \hookrightarrow S_N = T
  \end{equation*}
  where
  \begin{equation*}
    S_\ell \cong S_{\ell-1} \cup_{\Lambda^{n_\ell}_{i_\ell}} \Delta^{n_\ell} .
  \end{equation*}

  Suppose that the limit
  \begin{equation*}
    Z_j = \hom(S_j,X)\times_{\hom(S_j,Y)}\hom(T,Y) .
  \end{equation*}
  exists for $j<\ell$, and the map $Z_j\to Z_0$ is a
  cover. The limit
  \begin{equation*}
    Z_0=\hom(S,X)\times_{\hom(S,Y)}\hom(T,Y)
  \end{equation*}
  exists by hypothesis. We have the pullback diagram
  \begin{equation*}
    \begin{xy}
      \Square[Z_\ell`X_{n_\ell}`Z_{\ell-1}`\Lambda^{n_\ell}_{i_\ell}(f);%
      ``\lambda^{n_\ell}_{i_\ell}(f)`]
    \end{xy}
  \end{equation*}
  This pullback exists because $\lambda^{n_\ell}_{i_\ell}(f)$
  is a cover.

  We conclude that the limit $Z_N=\hom(T,X)$ exists, and that the
  morphism $Z_N\to Z_0$ is a cover.
\end{proof}

\begin{theorem}
  \label{thm:hk1} \mbox{}
  \begin{enumerate}
  \item An $n$-hypercover is an $n$-stack.
  \item A hypercover which is an $n$-stack is an $n$-hypercover.
  \item The composition of two $n$-stacks is an $n$-stack.
  \item Let $f\colon X_\bullet\to Y_\bullet$ be an $n$-stack and let
    $g\colon Z_\bullet\to Y_\bullet$ be a map of Lie $\infty$-groupoids.
    If the pullback of $f_0$ along $g_0$ exists in $\C$, then the
    pullback of $f$ along $g$ exists in $\sC$ and this pullback is an
    $n$-stack.
  \end{enumerate}
\end{theorem}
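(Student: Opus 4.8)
The plan is to base all four parts on one combinatorial observation together with the filtration lemmas. Gluing the $i^{\textit{th}}$ face back onto a horn recovers the boundary, $\partial\Delta^k\cong\Lambda^k_i\cup_{\partial\Delta^{k-1}}\Delta^{k-1}$, so, whenever the limits in sight exist, applying $\hom(-,X)$ and $\hom(-,Y)$ exhibits the matching object as a pullback
\[
  M_k(f)\;\cong\;\Lambda^k_i(f)\times_{M_{k-1}(f)}X_{k-1},
\]
whose second leg is $\mu_{k-1}(f)$, whose first projection $\pi\colon M_k(f)\to\Lambda^k_i(f)$ satisfies $\lambda^k_i(f)=\pi\circ\mu_k(f)$, and whose remaining leg is a canonical map $q\colon\Lambda^k_i(f)\to M_{k-1}(f)$. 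Part (1) then follows at once: if $f$ is an $n$-hypercover, the limit $\Lambda^k_i(f)$ exists by Lemma \ref{lemma:phrep} (take $S=\Lambda^k_i$, $T=\Delta^k$; every $\mu_j(f)$ is a cover), $\pi$ is a pullback of the cover $\mu_{k-1}(f)$ and hence a cover, so $\lambda^k_i(f)=\pi\circ\mu_k(f)$ is a cover; and for $k>n$ both $\mu_{k-1}(f)$ and $\mu_k(f)$ are invertible, hence so are $\pi$ and $\lambda^k_i(f)$.

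For (2), let $f$ be a hypercover which is an $n$-stack, so $M_k(f)$, $\Lambda^k_i(f)$ and the square above exist for all $k$. If $k>n$, then $\lambda^k_i(f)=\pi\circ\mu_k(f)$ is an isomorphism while $\mu_k(f)$ is a cover, hence an epimorphism, so $\mu_k(f)$ is at once an epimorphism and a split monomorphism, therefore invertible. For $k=n$: by the case $k>n$ the map $\mu_{n+1}(f)$ is invertible, and $\lambda^{n+1}_i(f)$ is too, so $\pi\colon M_{n+1}(f)\to\Lambda^{n+1}_i(f)$ is invertible, being $\lambda^{n+1}_i(f)\circ\mu_{n+1}(f)^{-1}$. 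Now $\pi$ is a pullback of $\mu_n(f)$ along $q\colon\Lambda^{n+1}_i(f)\to M_n(f)$, so once $q$ is known to be a cover, locality of isomorphisms (Axiom \ref{axiom:subcan}) makes $\mu_n(f)$ invertible. To see $q$ is a cover, factor it as
\[
  \Lambda^{n+1}_i(f)=\hom(\Lambda^{n+1}_i,X)\times_{\hom(\Lambda^{n+1}_i,Y)}Y_{n+1}\too M_n(f)\times_{Y_n}Y_{n+1}\too M_n(f),
\]
the first map being a pullback along $Y_{n+1}$ of the cover $\hom(\Lambda^{n+1}_i,X)\to\hom(\partial\Delta^n,X)\times_{\hom(\partial\Delta^n,Y)}\hom(\Lambda^{n+1}_i,Y)$ supplied by Lemma \ref{lemma:hrep} for the inclusion $\partial\Delta^n\hookrightarrow\Lambda^{n+1}_i$ (its hypothesis holds, via Lemma \ref{lemma:phrep}, because $f$ is a hypercover), and the second map a pullback of the face map $\p_i\colon Y_{n+1}\to Y_n$, which is a cover since $Y$ is a Lie $\infty$-groupoid. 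Hence $q$ is a composite of covers. I expect this to be the main obstacle: for $k>n$ the horn map $\lambda^k_i(f)$ is directly invertible, whereas for $k=n$ one only has the level-$(n+1)$ horn map invertible and must propagate invertibility down one level, and that is exactly what forces one to feed the \emph{non-expansion} inclusion $\partial\Delta^n\hookrightarrow\Lambda^{n+1}_i$ into the hypercover lemma (Lemma \ref{lemma:hrep}), where the weaker $n$-stack lemma (Lemma \ref{lemma:srep}) would not apply; so the full strength of ``hypercover'' is genuinely used.

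Parts (3) and (4) are inductions on simplicial degree in the style of the proof of Theorem \ref{prop:hk2}, with Lemmas \ref{lemma:psrep} and \ref{lemma:srep} and the collapsibility of $\Lambda^k_i$ playing the roles of Lemmas \ref{lemma:phrep} and \ref{lemma:hrep}. For a composable pair $X\xrightarrow{g}Y\xrightarrow{f}Z$ of $n$-stacks, one identifies $\Lambda^k_i(fg)\cong\hom(\Lambda^k_i,X)\times_{\hom(\Lambda^k_i,Y)}\Lambda^k_i(f)$, which exists by Lemma \ref{lemma:psrep} (the hypothesis on $Z_k\to Z_0$ holds since $Z$ is a Lie $\infty$-groupoid; the hypothesis on $\lambda^{k'}_{i'}(fg)$ for $k'<k$ is the inductive hypothesis); the induced map $\Lambda^k_i(g)\to\Lambda^k_i(fg)$ is a pullback of $\lambda^k_i(f)$, so $\lambda^k_i(fg)=\big(\Lambda^k_i(g)\to\Lambda^k_i(fg)\big)\circ\lambda^k_i(g)$ is a composite of covers, and of isomorphisms when $k>n$. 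For (4), given an $n$-stack $f\colon X\to Y$ and a map $g\colon Z\to Y$ of Lie $\infty$-groupoids with $X_0\times_{Y_0}Z_0$ existing, build $X\times_Y Z$ degreewise by $(X\times_Y Z)_k=X_k\times_{Y_k}Z_k$: at stage $k$, with the lower stages already built so that $\hom(\Lambda^k_i,X\times_Y Z)$ exists, one has $\Lambda^k_i(\pr_Z)\cong\Lambda^k_i(f)\times_{Y_k}Z_k$, whence $(X\times_Y Z)_k\cong X_k\times_{\Lambda^k_i(f)}\Lambda^k_i(\pr_Z)$ exists because $\lambda^k_i(f)$ is a cover, and the induced $\lambda^k_i(\pr_Z)$ is a pullback of $\lambda^k_i(f)$, hence a cover, and an isomorphism for $k>n$. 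Note that $g$ itself need not be a cover; only $\lambda^k_i(f)$ is used.
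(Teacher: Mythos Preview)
Your argument is correct and follows the same architecture as the paper's proof: the pullback square $M_k(f)\cong\Lambda^k_i(f)\times_{M_{k-1}(f)}X_{k-1}$ with $\lambda^k_i(f)=\pi\circ\mu_k(f)$ is exactly the paper's diagram \eqref{mulambda}, and parts (3) and (4) run the same induction via the relative-horn pullback square, with Lemma~\ref{lemma:psrep} supplying existence. Your treatment of (2) is in fact slightly more explicit than the paper's: where the paper asserts that $\Lambda^k_i(f)\to M_{k-1}(f)$ is a cover by citing Lemma~\ref{lemma:phrep} (whose statement concerns existence rather than covers), you spell out the factorization through $M_n(f)\times_{Y_n}Y_{n+1}$ and invoke Lemma~\ref{lemma:hrep} for the inclusion $\partial\Delta^n\hookrightarrow\Lambda^{n+1}_i$ together with the fact that face maps of a Lie $\infty$-groupoid are covers---this makes transparent why the hypercover hypothesis, and not merely the $n$-stack hypothesis, is needed at this step.
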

\begin{proof}
  Let $f\colon X_\bullet\to Y_\bullet$ be an $\infty$-hypercover. We see
  that $f$ is an $\infty$-stack by considering the finite inclusions
  $\Lambda^k_i\hookrightarrow\Delta^k$ and applying Lemma
  \ref{lemma:hrep}. It remains to show that if $f$ is an
  $n$-hypercover, then $\lambda^k_i(f)$ is an isomorphism when $k>n$ (and
  $0\le i\le k$).

  The square in the commuting diagram
  \begin{equation}\label{mulambda}
    \begin{xy}
      \qtriangle(0,-250)<600,500>[X_k`M_k(f)`\Lambda^k_i(f);\mu_k(f)`\lambda^k_i(f)`]
      \Square(600,-250)[M_k(f)`X_{k-1}`\Lambda^k_i(f)`M_{k-1}(f);``\mu_{k-1}(f)`]
    \end{xy}
  \end{equation}
  is a pullback. If $f$ is an $n$-hypercover and $k>n$, then the maps
  $\mu_k(f)$ and $\mu_{k-1}(f)$ are isomorphisms, and we see that
  $\lambda^k_i(f)$ is an isomorphism.

  To prove the second part, we consider \eqref{mulambda} in the case
  where $f$ is a hypercover and an $n$-stack. If $k>n$, then $\lambda^k_i(f)$ is an isomorphism.  The map
  $\mu_k(f)$ is a cover, and, by Axiom \ref{axiom:subcan}, an epimorphism.  The diagram \eqref{mulambda} now implies that $\mu_k(f)$ is an
  isomorphism.  Similarly, the map $M_k(f)\to\Lambda^k_i(f)$ is an isomorphism.

  The map $\Lambda^k_i(f)\to M_{k-1}(f)$ is induced by the inclusion $\partial\Delta^{k-1}\hookrightarrow\Lambda^k_i$. Lemma \ref{lemma:hrep} shows that it is a cover.  The pull-back of $\mu_{k-1}(f)$ along this cover is the map $M_k(f)\to\Lambda^k_i(f)$.  This map is an isomorphism for $k>n$.  Axiom \ref{axiom:subcan} therefore implies that $\mu_{k-1}(f)$ is an isomorphism for $k>n$. Hence $f$ is an $n$-hypercover.

  Turning to the third part of the theorem, consider a composable pair
  of $n$-stacks
  \begin{equation*}
    \begin{xy}
      \morphism<400,0>[X_\bullet`Y_\bullet;g]
      \morphism(400,0)<400,0>[Y_\bullet`Z_\bullet;f]
    \end{xy}
  \end{equation*}
  Suppose that the limit $\Lambda^j_i(fg)$ exists and
  that the map $\lambda^j_i(fg)$ is a cover, for all $0<j<k$ (and
  $0\le i\le j$). Lemma \ref{lemma:srep} now shows that the limit
  $\Lambda^k_i(fg)$ exists.  The square in the commuting
  diagram
  \begin{equation*}
    \begin{xy}
      \qtriangle<600,500>[X_k`\Lambda^k_i(g)`\Lambda^k_i(fg);%
      \lambda^k_i(g)`\lambda^k_i(fg)`]
      \Square(600,0)[\Lambda^k_i(g)`Y_k`\Lambda^k_i(fg)`\Lambda^k_i(f);%
      ``\lambda^k_i(f)`]
    \end{xy}
  \end{equation*}
  is a pullback. Since $f$ and $g$ are $n$-stacks, we see that
  $\lambda^k_i(fg)$ is a (composition of) cover(s) for all $k>0$ and
  $0\le i\le k$, and an isomorphism if $k>n$.

  We turn to the fourth statement. Consider an $n$-stack
  $f:X_\bullet\to Z_\bullet$ and a map $g:Y_\bullet\to Z_\bullet$ of
  Lie $\infty$-groupoids. Suppose that, for $j<k$ (and $0\le i\le j$), the pullback $g^*X_j$ exists, the limit $\Lambda^j_i(g^*f)$ exists, and the map $\lambda^j_i(g^*f)$ is a cover. Lemma \ref{lemma:psrep} shows that the limit
  $\Lambda^k_i(g^*f)$ exists for $0\le i\le k$. The limit $g^\ast X_k$ is the pullback
  \begin{equation*}
    \begin{xy}
      \Square[g^*X_k`X_k`\Lambda^k_i(g^*f)`\Lambda^k_i(f);%
      `\lambda^k_i(g^*f)`\lambda^k_i(f)`]
    \end{xy}
  \end{equation*}
  The map $\lambda^k_i(f)$ is a cover for all $k>0$ because $f$ is an $n$-hypercover. This shows that the pull-back $g^*X_k$ exists, that the map $\lambda^k_i(g^*f)$ is a cover for all $k>0$ and $0\le i\le k$, and that this map is an isomorphism for
  $k>n$.
\end{proof}

\section{Higher Morphism Spaces in Higher Stacks}
\label{sec:highhom}

Order preserving maps of all finite ordinals, including 0, form a
category $\Delta_+$ extending $\Delta$.  An augmented simplicial set
is a functor
\begin{equation*}
  \begin{xy}
    \morphism[\Delta_+^{\circ}`\st;]
  \end{xy}
\end{equation*}
Such a functor consists of a simplicial set $S$ equipped with a map to
a constant simplicial set $S_{-1}$.

The ordinal sum
\begin{equation*}
  [n]+[m]:=\{0\leq\ldots\leq n\leq 0'\leq\ldots\leq m'\}=[n+m+1]
\end{equation*}
endows the category $\Delta_+$ with a monoidal structure.  This
structure extends along the Yoneda embedding
\begin{equation*}
  \begin{xy}
    \morphism[\Delta_+`\sst_+;]
  \end{xy}
\end{equation*}
to give a closed monoidal structure on $\sst_+$ called the
\emph{join}, and denoted $\star$. Given an augmented simplicial set
$K$, denote the right adjoint to $K\star(-)$ by
\begin{equation*}
  \begin{xy}
    \morphism<750,0>[\sst_+`\sst_+;(-)^{K\star}]
  \end{xy}
\end{equation*}

\begin{example}
  The functor $(-)^{\Delta^{n-1}\star}$ is Illusie's $\Dec_n(-)$
  (c.f.\ \cite[Chapter VI]{Ill:72}).
\end{example}

The inclusion $i:\Delta\hookrightarrow\Delta_+$ provides a forgetful
functor
\begin{equation*}
  \begin{xy}
    \morphism[\sst_+`\sst;i^*]
  \end{xy}
\end{equation*}
Its right adjoint
\begin{equation*}
  \begin{xy}
    \morphism[\sst`\sst_+;i_*]
  \end{xy}
\end{equation*}
augments a simplicial set by a point.

\begin{definition}\mbox{}
  \begin{enumerate}
  \item Let $S$ and $T$ be simplicial sets. Denote by $S\star T$ the
    simplicial set
    \begin{equation*}
      S\star T = i^*((i_*S)\star(i_*T)) .
    \end{equation*}
  \item Let $X_{\bullet}$ be in $\sC$ and let $S$ be a finite
    simplicial set.  Denote by $X^{S\star}_{\bullet}$ the putative simplicial object with $k$-simplices
    \begin{align*}
      X^{S\star}_k:=\hom(S\star\Delta^k,X)
    \end{align*}
    Face and degeneracy maps are given by $1\star d_i$ and $1\star
    s_i$.
  \end{enumerate}
\end{definition}

In \cite{Dus:02}, Duskin gave a construction of the collection of
morphisms in a higher category.\footnote{Duskin called this the
  ``path-homotopy complex''.  His construction is hinted at in the
  earlier treatment of nerves in \cite{Ill:72}, and of weak Kan
  complexes in \cite{BoV:73}.}
\begin{definition}
  Let $X_{\bullet}$ be an $\infty$-groupoid in $\st$. Define
  $P^{\geq1}X_{\bullet}$ to be the pullback
  \begin{equation*}
    \begin{xy}
      \Square[P^{\geq1}X_{\bullet}`X^{\Delta^0\star}_\bullet`X_0`X_{\bullet};```]
    \end{xy}
  \end{equation*}
\end{definition}
The simplicial set $P^{\geq 1}X$ is an $\infty$-groupoid which models
the ``space'' of $1$-morphisms in $X_{\bullet}$.\footnote{Lurie
  \cite{Lur:09} denotes this construction `$\hom_X^L$'. Given $x,y\in
  X_0$, Lurie's $\hom_X^L(x,y)$ is the fiber at $(x,y)$ of a canonical
  map $P^{\geq 1}X\rightarrow\hom(\partial\Delta^1,X)$.}

We will generalize $P^{\ge1}X_\bullet$ to higher morphism spaces
$P^{\ge k}X_\bullet$, for all $k>0$, and at the same time, we will define a relative
version of the construction. The discussion follows the lines of
Joyal's proof of \cite[Theorem~3.8]{Joy:02}, except that we restrict
attention to the case
\begin{equation*}
  (S\hookrightarrow T) =
  (\partial\Delta^{k-1}\hookrightarrow\Delta^{k-1}) ,
\end{equation*}
and work in the Lie setting.

Given a proper, non-empty subset $J$ of $[n]$, let
\begin{equation*}
  \Lambda^n_J = \bigcup_{i\in J} \partial_i\Delta^n \subset \partial\Delta^n .
\end{equation*}
An induction shows that $\Lambda^n_J$ is collapsible.

Let $f:X_\bullet\to Y_\bullet$ be an $\infty$-stack. The
$\ell$-simplices of the simplicial object
\begin{equation}
  \label{star}
  X_\bullet^{\partial\Delta^{k-1}\star}\times_{Y_\bullet^{\partial\Delta^{k-1}\star}}
  Y_\bullet^{\Delta^{k-1}\star}
\end{equation}
are given by the limit
\begin{multline*}
  \hom(\partial\Delta^{k-1}\star\Delta^\ell,X)
  \times_{\hom(\partial\Delta^{k-1}\star\Delta^\ell,Y)}Y_{k+\ell} \\
  \cong \hom(\Lambda^{k+\ell}_{\{0,\ldots,k-1\}},X)
  \times_{\hom(\Lambda^{k+\ell}_{\{0,\ldots,k-1\}},Y)} Y_{k+\ell} .
\end{multline*}
Lemma \ref{lemma:psrep} implies that this limit exists. In the special case of vertices $\ell=0$, we obtain a
natural identification with the space of relative horns
$\Lambda^k_k(f)$.

Denote by $f^{\partial\Delta^{k-1}\star}$ the induced map
\begin{equation*}
  f^{\partial\Delta^{k-1}\star}\colon X^{\Delta^{k-1}\star} \to
  X_\bullet^{\partial\Delta^{k-1}\star}\times_{Y_\bullet^{\partial\Delta^{k-1}\star}}
  Y_\bullet^{\Delta^{k-1}\star} .
\end{equation*}

\begin{lemma}
  \label{star-l}
  Let $f:X_\bullet\to Y_\bullet$ be an $\infty$-stack. For $\ell>0$,
  the map $\Lambda^\ell_i(f^{\partial\Delta^{k-1}\star})$ is
  canonically isomorphic to $\Lambda^{k+\ell}_{k+i}(f)$.
\end{lemma}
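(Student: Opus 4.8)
The plan is to unwind both sides of the claimed isomorphism as limits of the form $\hom(K,X)\times_{\hom(K,Y)}Y_m$ for suitable simplicial sets $K$, and then to exhibit an isomorphism of the indexing simplicial sets. Concretely, recall from the discussion preceding the lemma that the $\ell$-simplices of the simplicial object in \eqref{star} are naturally identified with
\begin{equation*}
  \hom(\Lambda^{k+\ell}_{\{0,\ldots,k-1\}},X)\times_{\hom(\Lambda^{k+\ell}_{\{0,\ldots,k-1\}},Y)}Y_{k+\ell},
\end{equation*}
and that $X^{\Delta^{k-1}\star}_\ell=\hom(\Delta^{k-1}\star\Delta^\ell,X)\cong\hom(\Delta^{k+\ell},X)=X_{k+\ell}$. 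Thus the map $f^{\partial\Delta^{k-1}\star}$ on $\ell$-simplices is, up to this identification, exactly the horn-filler map $\lambda^{k+\ell}_{k}(f):X_{k+\ell}\to\Lambda^{k+\ell}_{\{0,\ldots,k-1\}}(f)$, since $\Lambda^{k+\ell}_{\{0,\ldots,k-1\}}=\bigcup_{j=0}^{k-1}\partial_j\Delta^{k+\ell}$ is the union of all faces except the $k$-th — i.e. it differs from the $(k+\ell)$-horn $\Lambda^{k+\ell}_k$ only in that the faces $\partial_{k+1},\ldots,\partial_{k+\ell}$ are also omitted. (For $\ell=1$ this is literally $\Lambda^{k+1}_k$, matching the stated identification at vertices.)

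Next I would compute $\Lambda^\ell_i(f^{\partial\Delta^{k-1}\star})$ directly from the definition of the relative horn object. A section of it over a test object consists of an $\ell$-horn $\Lambda^\ell_i\to Y^{\Delta^{k-1}\star}$ in the target together with a lift to $X^{\Delta^{k-1}\star}$ of all faces $\partial_j\Delta^{\ell-1}$, $j\neq i$. Using the closed monoidal adjunction for the join — the functor $(-)^{\partial\Delta^{k-1}\star}$ is right adjoint to $\partial\Delta^{k-1}\star(-)$, and likewise for $\Delta^{k-1}\star(-)$ — such data transpose to a diagram indexed by the pushout-product simplicial set $\bigl(\partial\Delta^{k-1}\star\Delta^\ell\bigr)\cup_{\partial\Delta^{k-1}\star\Lambda^\ell_i}\bigl(\Delta^{k-1}\star\Lambda^\ell_i\bigr)$. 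The join of simplices satisfies $\Delta^{p}\star\Delta^{q}\cong\Delta^{p+q+1}$ with the two factors sitting as the front $\{0,\ldots,p\}$ and back $\{p+1,\ldots,p+q+1\}$ faces; so I would verify the elementary combinatorial identity
\begin{equation*}
  \bigl(\partial\Delta^{k-1}\star\Delta^\ell\bigr)\cup_{\partial\Delta^{k-1}\star\Lambda^\ell_i}\bigl(\Delta^{k-1}\star\Lambda^\ell_i\bigr)\;\cong\;\Lambda^{k+\ell}_{k+i},
\end{equation*}
which just says that omitting the $(k+i)$-th face of $\Delta^{k+\ell}$ is the same as allowing the full front $(k-1)$-skeleton only along the back-horn $\Lambda^\ell_i$ and otherwise taking the boundary of the front face. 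Feeding this identity into the limit defining $\hom(-,X)\times_{\hom(-,Y)}Y_{k+\ell}$ yields exactly $\Lambda^{k+\ell}_{k+i}(f)$, naturally in $\ell$, which is the assertion.

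The main obstacle I anticipate is purely bookkeeping: making the combinatorial identification of simplicial sets above precise and checking that it is compatible with the face and degeneracy maps (which on the $\star$-side are induced by $1\star d_i$ and $1\star s_i$), so that the isomorphism is genuinely natural and not just a levelwise bijection. Two smaller points need care. First, one must confirm that all the relevant limits exist — but this is handled by Lemma \ref{lemma:psrep}, since each $\Lambda^n_J$ is collapsible (as remarked just before the lemma) and $f$ is an $\infty$-stack, so the hypotheses on $\hom(T,Y)$ and on the maps $\lambda^k_i(f)$ are met. Second, the restriction $\ell>0$ is needed precisely so that $k+i$ with $0\le i\le\ell$ ranges over $k,\ldots,k+\ell$, all of which are $>0$, keeping $\Lambda^{k+\ell}_{k+i}(f)$ a legitimate relative horn object; at $\ell=0$ the statement degenerates and is replaced by the identification of the base with $\Lambda^k_k(f)$ already noted.
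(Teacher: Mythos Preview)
Your approach is essentially the paper's: the combinatorial identity
\[
  \bigl(\partial\Delta^{k-1}\star\Delta^\ell\bigr)\cup_{\partial\Delta^{k-1}\star\Lambda^\ell_i}\bigl(\Delta^{k-1}\star\Lambda^\ell_i\bigr)\;\cong\;\Lambda^{k+\ell}_{k+i}
\]
is exactly the pushout square the paper writes down, and applying $\hom(-,X)\times_{\hom(-,Y)}Y_{k+\ell}$ is exactly the passage to the pullback square that the paper identifies with the defining square for $\Lambda^\ell_i(f^{\partial\Delta^{k-1}\star})$. One small wobble: the map $X_{k+\ell}\to\Lambda^{k+\ell}_{\{0,\ldots,k-1\}}(f)$ is not $\lambda^{k+\ell}_k(f)$ (which has target $\Lambda^{k+\ell}_k(f)$), and your description of the data of a section should have the $\ell$-simplex land in the fibre product $X^{\partial\Delta^{k-1}\star}\times_{Y^{\partial\Delta^{k-1}\star}}Y^{\Delta^{k-1}\star}$ rather than in $Y^{\Delta^{k-1}\star}$ alone --- but these are cosmetic and do not affect the argument.
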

\begin{proof}
  An exercise in the combinatorics of joins shows that
  \begin{align*}
    ( \Delta^{k-1}\star\Lambda^\ell_i \hooklongrightarrow
    \Delta^{k-1}\star\Delta^\ell ) & \cong (
    \Lambda^{k+\ell}_{\{k,\ldots,\widehat{k+i},\ldots,k+\ell\}}
    \hooklongrightarrow \Delta^{k+\ell} ) , \text{ and} \\
    ( \partial\Delta^{k-1}\star\Delta^\ell \hooklongrightarrow
    \Delta^{k-1}\star\Delta^\ell ) & \cong (
    \Lambda^{k+\ell}_{\{0,\ldots,k-1\}} \hooklongrightarrow
    \Delta^{k+\ell} ) .
  \end{align*}
  In this way, we obtain a pushout square
  \begin{equation*}
    \begin{xy}
      \square<1000,500>[\partial\Delta^{k-1}\star\Lambda^\ell_i`
      \partial\Delta^{k-1}\star\Delta^\ell`
      \Delta^{k-1}\star\Lambda^\ell_i`\Lambda^{k+\ell}_{k+i};```]
    \end{xy}
  \end{equation*}
  which gives rise to the pullback square
  \begin{equation*}
    \begin{xy}
      \square<1550,500>[\Lambda^{k+\ell}_{k+i}(f)`%
      (X^{\partial\Delta^{k-1}\star}\times_{Y^{\partial\Delta^{k-1}\star}}Y^{\Delta^{k-1}\star})_{\ell}`
      \Lambda^\ell_i(X^{\Delta^{k-1}\star})`%
      \Lambda^{\ell}_i(X^{\partial\Delta^{k-1}\star}\times_{Y^{\partial\Delta^{k-1}\star}}Y^{\Delta^{k-1}\star});```]
    \end{xy}
  \end{equation*}
  But this is also the pullback square defining
  $\Lambda^\ell_i(f^{\partial\Delta^{k-1}\star})$.
\end{proof}

It follows that if $f$ is an $\infty$-stack, then so is
$f^{\partial\Delta^{k-1}\star}$, and if $f$ is an $n$-stack,
$f^{\partial\Delta^{k-1}\star}$ is an $(n-k)$-stack.

\begin{definition}\label{defi:relkmor}
  For $k>0$, the \emph{relative higher morphism space} $P^{\geq
    k}(f)_{\bullet}$ is the pullback
  \begin{equation*}
    \begin{xy}
      \Square[P^{\geq k}(f)_{\bullet}`X^{\Delta^{k-1}\star}_{\bullet}`\Lambda^k_k(f)`%
      X_\bullet^{\partial\Delta^{k-1}\star}\times_{Y_\bullet^{\partial\Delta^{k-1}\star}}%
      Y_\bullet^{\Delta^{k-1}\star};```]
    \end{xy}
  \end{equation*}
\end{definition}

The following result is an immediate consequence of Lemma~\ref{star-l}
and Theorem~\ref{thm:hk1}.
\begin{theorem}\label{thm:kmor}
  If $f:X_{\bullet}\to Y_{\bullet}$ is an $n$-stack, the relative
  higher morphism space $P^{\geq k}(f)_{\bullet}$ is a Lie
  $(n-k)$-groupoid.
\end{theorem}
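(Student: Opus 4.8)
The plan is to unwind the definition of $P^{\geq k}(f)_\bullet$ in Definition~\ref{defi:relkmor} as a pullback in $\sC$ and verify the horn-filling conditions of Definition~\ref{defi:ngpd} directly. First I would record that, by the remark following Lemma~\ref{star-l}, the map $f^{\partial\Delta^{k-1}\star}\colon X^{\Delta^{k-1}\star}_\bullet\to X_\bullet^{\partial\Delta^{k-1}\star}\times_{Y_\bullet^{\partial\Delta^{k-1}\star}}Y_\bullet^{\Delta^{k-1}\star}$ is an $(n-k)$-stack whenever $f$ is an $n$-stack; this is the key input, and it rests on Lemma~\ref{star-l} identifying $\Lambda^\ell_i(f^{\partial\Delta^{k-1}\star})\cong\Lambda^{k+\ell}_{k+i}(f)$ together with the fact (from Theorem~\ref{thm:hk1}) that $\infty$-stacks and $n$-stacks are stable under the relevant constructions. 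I would also note that in degree $\ell=0$ the target simplicial object of $f^{\partial\Delta^{k-1}\star}$ has $0$-simplices $\Lambda^k_k(f)$, so that the inclusion $\Lambda^k_k(f)\hookrightarrow X_\bullet^{\partial\Delta^{k-1}\star}\times\cdots$ used in Definition~\ref{defi:relkmor} is precisely the inclusion of a vertex-level object, making the pullback square well-posed.

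Next I would invoke part (4) of Theorem~\ref{thm:hk1}: since $f^{\partial\Delta^{k-1}\star}$ is an $(n-k)$-stack and we are pulling it back along the map $\Lambda^k_k(f)\to X_\bullet^{\partial\Delta^{k-1}\star}\times_{Y_\bullet^{\partial\Delta^{k-1}\star}}Y_\bullet^{\Delta^{k-1}\star}$ of Lie $\infty$-groupoids (the object $\Lambda^k_k(f)$, regarded as a constant simplicial object, is a Lie $0$-groupoid), the pullback $P^{\geq k}(f)_\bullet$ exists in $\sC$ and is itself an $(n-k)$-stack, provided the pullback in degree $0$ exists in $\C$. But the degree-$0$ pullback is $\Lambda^k_k(f)\times_{M_0(f^{\partial\Delta^{k-1}\star})}X_0^{\Delta^{k-1}\star}$ over the common object $\Lambda^k_k(f)$, i.e.\ an isomorphism onto $X^{\Delta^{k-1}\star}_0=\hom(\Delta^{k-1}\star\Delta^0,X)$, whose existence is already guaranteed by Lemma~\ref{lemma:psrep} (as in the discussion preceding the lemma). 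So the hypothesis of Theorem~\ref{thm:hk1}(4) is met and $P^{\geq k}(f)_\bullet$ is an $(n-k)$-stack.

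Finally, to upgrade ``$(n-k)$-stack'' to ``Lie $(n-k)$-groupoid'' I would identify the target of $P^{\geq k}(f)_\bullet\to Y_\bullet$ — or rather observe that $P^{\geq k}(f)_\bullet$, as defined, is a stack \emph{over the terminal object}: tracing through the pullback, $P^{\geq k}(f)_\bullet$ maps to $\Lambda^k_k(f)$ and thence to $\ast$, and the composite is what the $(n-k)$-stack structure refines. Concretely, one checks that the absolute horn object $\Lambda^\ell_i(P^{\geq k}(f))$ coincides with the relative one $\Lambda^\ell_i(f^{\partial\Delta^{k-1}\star})$ pulled back to $P^{\geq k}(f)$, using that $\Lambda^k_k(f)$ is constant; since the latter maps are covers for all $\ell>0$ and isomorphisms for $\ell>n-k$, the same holds for the former, which is exactly the condition that $P^{\geq k}(f)_\bullet$ be a Lie $(n-k)$-groupoid in the sense of Definition~\ref{defi:ngpd}.

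The main obstacle I anticipate is bookkeeping at level $0$: one must check carefully that the object $\Lambda^k_k(f)$ appearing as the base of the pullback in Definition~\ref{defi:relkmor} really is the degree-$0$ part of the target simplicial object $X_\bullet^{\partial\Delta^{k-1}\star}\times_{Y_\bullet^{\partial\Delta^{k-1}\star}}Y_\bullet^{\Delta^{k-1}\star}$ (this is the ``special case of vertices $\ell=0$'' identification flagged just before Lemma~\ref{star-l}), so that pulling back along the constant object $\Lambda^k_k(f)$ is legitimate and the existence hypothesis in Theorem~\ref{thm:hk1}(4) is automatically satisfied rather than an extra condition. Once that identification is in hand, everything else is a formal consequence of Lemma~\ref{star-l} and Theorem~\ref{thm:hk1}.
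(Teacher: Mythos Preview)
Your proposal is correct and follows essentially the same route as the paper, which simply records the theorem as ``an immediate consequence of Lemma~\ref{star-l} and Theorem~\ref{thm:hk1}''. You have unpacked precisely those two ingredients: Lemma~\ref{star-l} gives that $f^{\partial\Delta^{k-1}\star}$ is an $(n-k)$-stack, and Theorem~\ref{thm:hk1}(4) (pullback) together with~(3) (composition with the $0$-stack $\Lambda^k_k(f)\to\ast$) yields the Lie $(n-k)$-groupoid structure. Your observation in the third paragraph---that over a constant base the relative horn objects coincide with the absolute ones---is exactly what makes the passage from ``$(n-k)$-stack over $\Lambda^k_k(f)$'' to ``Lie $(n-k)$-groupoid'' immediate, and it also neatly sidesteps having to verify that the target $X_\bullet^{\partial\Delta^{k-1}\star}\times_{Y_\bullet^{\partial\Delta^{k-1}\star}}Y_\bullet^{\Delta^{k-1}\star}$ is itself a Lie $\infty$-groupoid, which the literal hypotheses of Theorem~\ref{thm:hk1}(4) would otherwise demand.
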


The vertices of $P^{\geq k}(f)_{\bullet}$ are the $k$-simplices
of $X_{\bullet}$. Its $1$-simplices correspond to
$(k+1)$-simplices $x\in X_{k+1}$ such that
\begin{align*}
  f(x) &= s_kd_kf(x) \intertext{and, for $i<k$,}
  d_ix &= s_{k-1}d_{k-1}d_ix .
\end{align*}
We interpret $x$ as a path rel boundary in the fiber of $f$, which
begins at $d_{k+1}x$ and ends at $d_kx$.

When the matching object $M_k(f)$ exists, it provides a natural augmentation
\begin{equation}
  \label{augmentation}
  \pi\colon P^{\geq k}(f)_{\bullet} \to M_k(f) .
\end{equation}
The map underlying $\pi\colon P^{\geq k}(f)_0\cong X_k \to M_k(f)$ is
$\mu_k(f)$. This determines an augmentation because the diagram
\begin{equation*}
  \begin{xy}
    \morphism|a|/{@{>}@<3pt>}/<700,0>[P^{\geq k}(f)_1`P^{\geq k}(f)_0;\p_0]
    \morphism|b|/{@{>}@<-3pt>}/<700,0>[P^{\geq k}(f)_1`P^{\geq k}(f)_0;\p_1]
    \morphism(700,0)<600,0>[P^{\geq k}(f)_0`M_k(f);\pi]
  \end{xy}
\end{equation*}
commutes.

The augmentation \eqref{augmentation} encodes the idea that vertices in the relative higher morphism space
are lifts of a $k$-morphism in $Y_{\bullet}$, that edges are paths rel
boundary between these lifts such that the paths live in the fiber of
$f$, etc.

\begin{theorem}
  \label{thm:kmorhyp}
  If $f$ is an $n$-hypercover, then the augmentation
  \eqref{augmentation} exists and is an $(n-k)$-hypercover.
\end{theorem}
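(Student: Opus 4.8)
The plan is to run the same inductive skeleton-filtration argument that proved Theorem~\ref{thm:kmor}, but now tracking the stronger hypothesis that $f$ is an $n$-\emph{hypercover} rather than merely an $n$-stack, and deducing that the matching maps $\mu_\ell(\pi)$ of the augmentation $\pi$ are covers, and isomorphisms for $\ell\ge n-k$. By Proposition~\ref{prop:hyppi0}(1), an augmentation is an $(n-k)$-hypercover exactly when $\pi\colon P^{\ge k}(f)_0\to M_k(f)$ and $\mu_1(\pi)\colon P^{\ge k}(f)_1\to P^{\ge k}(f)_0\times_{M_k(f)}P^{\ge k}(f)_0$ are covers, and the absolute matching maps $\mu_\ell(P^{\ge k}(f))$ are covers for $\ell>1$ and isomorphisms for $\ell\ge n-k$. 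The first of these is immediate: the map underlying $\pi$ on vertices is $\mu_k(f)$, which is a cover since $f$ is an $n$-hypercover, and it is an isomorphism when $k\ge n$. So the real content is identifying the higher matching objects of the bisimplicial construction~\eqref{star} and of $P^{\ge k}(f)_\bullet$ itself.

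The key combinatorial step parallels Lemma~\ref{star-l}, but with boundaries in place of horns. First I would show that the matching map $\mu_\ell\bigl(f^{\partial\Delta^{k-1}\star}\bigr)$ is canonically isomorphic to the matching map $\mu_{k+\ell}(f)$ — more precisely, that its target $M_\ell(f^{\partial\Delta^{k-1}\star})$ is built from a pushout of simplicial sets
\begin{equation*}
  \partial\Delta^{k-1}\star\partial\Delta^\ell
  \;\cup_{\partial\Delta^{k-1}\star\Lambda^\ell_i}\;
  \bigl(\text{the pieces assembling }\partial(\Delta^{k-1}\star\Delta^\ell)\bigr),
\end{equation*}
and that an exercise in the combinatorics of joins identifies $\Delta^{k-1}\star\partial\Delta^\ell$, together with $\partial\Delta^{k-1}\star\Delta^\ell$, as exactly the boundary $\partial\Delta^{k+\ell}$ relative to the appropriate faces, so that the pullback square defining $M_\ell(f^{\partial\Delta^{k-1}\star})$ coincides with the one defining $M_{k+\ell}(f)$. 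Granting this, since $f$ is an $n$-hypercover, $\mu_{k+\ell}(f)$ is a cover for all $\ell$ and an isomorphism once $k+\ell\ge n$, i.e.\ once $\ell\ge n-k$; hence $f^{\partial\Delta^{k-1}\star}$ is an $(n-k)$-hypercover. Then, $P^{\ge k}(f)_\bullet$ is by Definition~\ref{defi:relkmor} the pullback of $f^{\partial\Delta^{k-1}\star}$ along the augmentation $\Lambda^k_k(f)\to X_\bullet^{\partial\Delta^{k-1}\star}\times_{Y_\bullet^{\partial\Delta^{k-1}\star}}Y_\bullet^{\Delta^{k-1}\star}$ (where the target, as an object of $\C$, is a constant simplicial object on $\Lambda^k_k(f)$, since $\ell=0$ recovers $\Lambda^k_k(f)$); by Theorem~\ref{prop:hk2}(2) the pullback of an $(n-k)$-hypercover is an $(n-k)$-hypercover, so $P^{\ge k}(f)_\bullet\to\Lambda^k_k(f)$ is an $(n-k)$-hypercover. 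Composing with the projection $\Lambda^k_k(f)\to M_k(f)$ induced by $\partial\Delta^{k-1}\hookrightarrow\Lambda^k_k$ — which is a cover by Lemma~\ref{lemma:phrep}, and in fact the very cover whose pullback of $\mu_{k-1}(f)$ gives $M_k(f)\to\Lambda^k_k(f)$ — and invoking Theorem~\ref{prop:hk2}(1), we get that $\pi\colon P^{\ge k}(f)_\bullet\to M_k(f)$ is an $(n-k)$-hypercover, provided we check that $\pi$ on vertices is indeed $\mu_k(f)$ (noted above) and that the composite has the right behaviour — but this is automatic once the two factors are $(n-k)$-hypercovers.

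The main obstacle is the join combinatorics in the first step: verifying that the matching object $M_\ell(f^{\partial\Delta^{k-1}\star})$ really is $M_{k+\ell}(f)$, which requires carefully identifying the boundary $\partial(\Delta^{k-1}\star\Delta^\ell)$ with the appropriate union of faces of $\Delta^{k+\ell}$ and checking that the gluing of $\Delta^{k-1}\star\partial\Delta^\ell$ to $\partial\Delta^{k-1}\star\Delta^\ell$ along $\partial\Delta^{k-1}\star\partial\Delta^\ell$ reproduces it. This is the analogue, for the pair $(\partial\Delta^\ell\hookrightarrow\Delta^\ell)$, of the horn computation already carried out in Lemma~\ref{star-l} for $(\Lambda^\ell_i\hookrightarrow\Delta^\ell)$; the same inductive bookkeeping over the faces applies, with $\Lambda^{k+\ell}_{k+i}$ replaced throughout by $\partial\Delta^{k+\ell}$ and the index set $\{k,\ldots,\widehat{k+i},\ldots,k+\ell\}$ replaced by $\{k,\ldots,k+\ell\}$. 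Once that identification is in hand, everything else is an application of the hypercover calculus already developed: Theorem~\ref{prop:hk2}, Proposition~\ref{prop:hyppi0}, and Lemma~\ref{lemma:phrep}.
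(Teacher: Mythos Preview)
The combinatorial core of your argument---identifying $M_{k+\ell}(f)$ via the join pushout
\[
\partial\Delta^{k+\ell}\;\cong\;(\Delta^{k-1}\star\partial\Delta^\ell)\cup_{\partial\Delta^{k-1}\star\partial\Delta^\ell}(\partial\Delta^{k-1}\star\Delta^\ell)
\]
---is exactly what the paper uses, and your observation that this parallels Lemma~\ref{star-l} with boundaries in place of horns is correct. The trouble is in how you package it in step~4.

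The map $\Lambda^k_k(f)\to M_k(f)$ you invoke does not exist. Since $\Lambda^k_k\subset\partial\Delta^k$, restriction runs the other way, giving $M_k(f)\to\Lambda^k_k(f)$; the inclusion $\partial\Delta^{k-1}\hookrightarrow\Lambda^k_k$ you cite induces a map $\Lambda^k_k(f)\to M_{k-1}(f)$, not to $M_k(f)$. So $\pi$ is not a \emph{composite} of your pullback map $P^{\ge k}(f)_\bullet\to\Lambda^k_k(f)$ with a second hypercover; rather $\pi$ is a \emph{factor} of that map, through $M_k(f)\to\Lambda^k_k(f)$. Knowing the composite is an $(n-k)$-hypercover does not force the first factor to be one: Proposition~\ref{prop:hyppi0} lets the absolute matching conditions for $\ell>1$ transfer, and $\mu_0(\pi)=\mu_k(f)$ is fine directly, but $\mu_1(\pi)\colon P^{\ge k}(f)_1\to X_k\times_{M_k(f)}X_k$ is not controlled by the cover $P^{\ge k}(f)_1\to X_k\times_{\Lambda^k_k(f)}X_k$. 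There is also a boundary failure: for $k=n$ you would need $f^{\partial\Delta^{n-1}\star}$ to be a $0$-hypercover, but $\mu_0(f^{\partial\Delta^{k-1}\star})=\lambda^k_k(f)$, which for an $n$-hypercover is only a cover at $k=n$, not an isomorphism.

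The paper sidesteps all of this by proving directly that $\mu_\ell(\pi)$ is a pullback of $\mu_{k+\ell}(f)$. One stacks the defining pullback for $P^{\ge k}(f)_\ell$ on top of a square exhibiting $M_\ell(\pi)$ as the pullback of $M_{k+\ell}(f)$ along $\Lambda^k_k(f)\to\bigl(X^{\partial\Delta^{k-1}\star}\times_{Y^{\partial\Delta^{k-1}\star}}Y^{\Delta^{k-1}\star}\bigr)_\ell$; the outer rectangle is a pullback by definition, and the bottom square is a pullback by the same join identity you found (embedded in a further two-square diagram, with the outer rectangle a pullback because $M_\ell(-)$ preserves limits). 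This uses your combinatorics but targets $\mu_\ell(\pi)$ directly, with no intermediate hypercover to compose or cancel.
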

\begin{proof}
  We prove that $\pi$ is an $(n-k)$-hypercover by showing that the
  square
  \begin{equation*}
    \begin{xy}
      \square<1000,500>[P^{\geq k}(f)_\ell`X_{k+\ell}`M_\ell(\pi)`M_{k+\ell}(f);%
      `\mu_\ell(\pi)`\mu_{k+\ell}(f)`]
    \end{xy}
  \end{equation*}
  is a pullback. For $\ell=0$, this is evident: both horizontal maps
  are isomorphisms in this case.

  For $\ell>0$, the above square is the top half of a commutative
  diagram
  \begin{equation*}
    \begin{xy}
      \square<1200,500>[P^{\geq k}(f)_\ell`X_{k+\ell}`M_\ell(\pi)`M_{k+\ell}(f);%
      `\mu_\ell(\pi)`\mu_{k+\ell}(f)`]
      \square(0,-500)<1200,500>[M_\ell(\pi)`M_{k+\ell}(f)`
      \Lambda^k_k(f)`(X^{\partial\Delta^{k-1}\star}
      \times_{Y^{\partial\Delta^{k-1}\star}}Y^{\Delta^{k-1}\star})_\ell;```]
    \end{xy}
  \end{equation*}
  The outer rectangle is a pullback by definition. Thus, it suffices
  to prove that the bottom square is a pullback.

  For $\ell=1$, this may be checked directly. For $\ell>1$, an exercise in the combinatorics of joins shows that
  we have a pushout square
  \begin{equation*}
    \begin{xy}
      \square<1000,500>[\partial\Delta^{k-1}\star\partial\Delta^\ell`
      \partial\Delta^{k-1}\star\Delta^\ell`
      \Delta^{k-1}\star\partial\Delta^\ell`\partial\Delta^{k+\ell};```]
    \end{xy}
  \end{equation*}
  which gives rise to a pullback square
  \begin{equation*}
    \begin{xy}
        \square<1550,500>[M_{k+\ell}(f)`%
        M_\ell(X^{\Delta^{k-1}\star})`%
        (X^{\partial\Delta^{k-1}\star}\times_{Y^{\partial\Delta^{k-1}\star}}Y^{\Delta^{k-1}\star})_\ell`
        M_{\ell}(X^{\partial\Delta^{k-1}\star}\times_{Y^{\partial\Delta^{k-1}\star}}Y^{\Delta^{k-1}\star});```]
    \end{xy}
  \end{equation*}
  This square embeds in a commutative diagram
  \begin{equation*}
    \begin{xy}
        \square<1000,500>[M_\ell(P^{\geq k}(f))`M_{k+\ell}(f)`\Lambda^k_k(f)`(X^{\partial\Delta^{k-1}\star}\times_{Y^{\partial\Delta^{k-1}\star}}Y^{\Delta^{k-1}\star})_\ell;```]
        \square(1000,0)<1550,500>[M_{k+\ell}(f)`%
        M_\ell(X^{\Delta^{k-1}\star})`%
        (X^{\partial\Delta^{k-1}\star}\times_{Y^{\partial\Delta^{k-1}\star}}Y^{\Delta^{k-1}\star})_\ell`
        M_{\ell}(X^{\partial\Delta^{k-1}\star}\times_{Y^{\partial\Delta^{k-1}\star}}Y^{\Delta^{k-1}\star});```]
    \end{xy}
  \end{equation*}
  The outer rectangle is a pullback because $M_\ell(-)$ commutes with limits.  We observed above that the right square is a pullback. We conclude that the left square is a pullback, completing the proof that $\mu_\ell(\pi)$ is a cover.
\end{proof}

\section{Strictification}\label{sec:stric}
In this section we recall Duskin's ``$n$-strictification'' functor
$\tn$ for $n\ge 0$. This is a partially defined left-adjoint to the
inclusion of the category of $n$-stacks into the category of
$\infty$-stacks.  We establish its main properties in Propositions
\ref{prop:tn} and \ref{prop:tnhyp}.

Let $f\colon X_{\bullet}\to Y_{\bullet}$ be an $\infty$-stack such
that the orbit space $\pi_0(P^{\geq n}(f))$ exists.  We define
a map
\begin{equation}\label{eq:ntn}
  \tr_n\tn(f)\colon\tr_n\tn(X,f)_\bullet\to\tr_n Y_\bullet
\end{equation}
On $k$-simplices, for $k<n$, $\tr_n\tn(f)$ is the map
\begin{align*}
  f_k\colon X_k\to Y_k
\end{align*}
On $n$-simplices, $\tr_n\tn(f)$ is the canonical map
\begin{align*}
  \pi_0(P^{\geq n}(f))\to Y_n
\end{align*}

\begin{lemma}\label{lemma:di1}
  Let $f\colon X_{\bullet}\to Y_{\bullet}$ be an $\infty$-stack such
  that the orbit space $\pi_0(P^{\geq n}(f))$ exists.  The maps $\lambda^k_i(\tn(f))$ are covers for
  $k\le n$. For all $i$, the limit $\Lambda^{n+1}_i(\tn(f))$ exists and the map $\Lambda^{n+1}_i(f)\to\Lambda^{n+1}_i(\tn(f))$ is a
  cover.
\end{lemma}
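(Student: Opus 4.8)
The plan is to construct the $n$-truncated simplicial object $\tr_n\tn(X,f)_\bullet$ and the map \eqref{eq:ntn} explicitly, then verify the horn-filling assertions degree by degree. In degrees $k<n$ the simplicial object agrees with $\tr_nX_\bullet$ and \eqref{eq:ntn} is $f_k$; in degree $n$ it is $\pi_0(P^{\geq n}(f))$, equipped with the canonical augmentation from Theorem~\ref{thm:kmor} (noting that, since $P^{\geq n}(f)_\bullet$ is a Lie $0$-groupoid, its orbit space is just the object of connected components and comes with a cover $P^{\geq n}(f)_0 = X_n \to \pi_0(P^{\geq n}(f))$). The face and degeneracy maps $[n]\to[n-1]$, $[n-1]\to[n]$ are inherited from $X_\bullet$: the face maps $\p_i\colon X_n\to X_{n-1}$ descend along $X_n\to\pi_0(P^{\geq n}(f))$ because adjacent $n$-simplices of $X$ — i.e.\ those identified in $\pi_0$ — have equal $i$-faces by the description of the $1$-simplices of $P^{\geq n}(f)$ given just before \eqref{augmentation}; the degeneracies $X_{n-1}\to X_n$ compose with $X_n\to\pi_0$ directly. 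One checks the simplicial identities hold, using that $X_n\to\pi_0(P^{\geq n}(f))$ is an epimorphism (Axiom~\ref{axiom:subcan}) to reduce each identity to the corresponding one in $X_\bullet$.

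First I would handle the horns $\Lambda^k_i(\tn(f))$ for $k<n$. For $k<n$ the $n$-truncated object $\tr_n\tn(f)$ agrees with $\tr_nf$ through degree $k$, so $\Lambda^k_i(\tn(f))=\Lambda^k_i(f)$ and $\lambda^k_i(\tn(f))=\lambda^k_i(f)$, which is a cover since $f$ is an $\infty$-stack. For $k=n$, the key point is that $\Lambda^n_i(\tn(f))$ exists and that $\lambda^n_i(\tn(f))$ is a cover. Here $\Lambda^n_i(\tn(f)) = \hom(\Lambda^n_i, \tr_n\tn(X,f))\times_{\hom(\Lambda^n_i,Y)}Y_n$; since $\Lambda^n_i$ is $(n-1)$-dimensional, this hom only sees degrees $<n$, where $\tn(f)$ agrees with $f$, so $\Lambda^n_i(\tn(f))=\Lambda^n_i(f)$, which exists. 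The map $\lambda^n_i(\tn(f))$ then factors as $\pi_0(P^{\geq n}(f))\to Y_n$ composed into $\Lambda^n_i(f)$'s defining data, but more usefully: the map $X_n\to\Lambda^n_i(f)$ is $\lambda^n_i(f)$, a cover, and it factors through $X_n\to\pi_0(P^{\geq n}(f))$ because two $n$-simplices with a common $(n+1)$-simplex witnessing their identification in $\pi_0$ have the same $\Lambda^n_i$-horn (again by the explicit description of $P^{\geq n}(f)_1$). Since $\lambda^n_i(f)=\lambda^n_i(\tn(f))\circ(X_n\to\pi_0)$ is a cover and $X_n\to\pi_0(P^{\geq n}(f))$ is a cover, Axiom~\ref{axiom:fg+g} gives that $\lambda^n_i(\tn(f))$ is a cover.

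For the last assertion, I would use Lemma~\ref{lemma:psrep} (applied to the collapsible, $n$-dimensional simplicial set $\Lambda^{n+1}_i$ — collapsibility of $\Lambda^{n+1}_i=\Lambda^{n+1}_{\{i\}}$ was noted in Section~\ref{sec:highhom}) to conclude that $\Lambda^{n+1}_i(\tn(f))$ exists, given that the maps $\lambda^k_j(\tn(f))$ are covers for $k\le n$, which we have just established, together with the hypothesis that the relevant restriction-to-a-vertex maps are covers. Then one identifies the map $\Lambda^{n+1}_i(f)\to\Lambda^{n+1}_i(\tn(f))$: both objects are built from the same diagram over $\Lambda^{n+1}_i$ except that in degree $n$ one uses $X_n$ versus $\pi_0(P^{\geq n}(f))$. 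I expect the main obstacle to be showing this comparison map is a cover; the plan is to filter $\Lambda^{n+1}_i$ by adjoining the top-dimensional (that is, $n$-dimensional) faces one at a time, writing $\Lambda^{n+1}_i(f)\to\Lambda^{n+1}_i(\tn(f))$ as an iterated pullback of the cover $X_n\to\pi_0(P^{\geq n}(f))$ along the structure maps, exactly as in the proofs of Lemmas~\ref{lemma:phrep} and \ref{lemma:psrep}, so that stability of covers under pullback (Axiom~\ref{axiom:top}) and under composition finishes the argument.
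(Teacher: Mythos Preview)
Your plan is correct and matches the paper's proof closely: for $k<n$ you identify $\lambda^k_i(\tn(f))$ with $\lambda^k_i(f)$; for $k=n$ you use the factorization $\lambda^n_i(f)=\lambda^n_i(\tn(f))\circ q$ together with Axiom~\ref{axiom:fg+g}; you invoke Lemma~\ref{lemma:psrep} for the existence of $\Lambda^{n+1}_i(\tn(f))$; and your filtration of $\Lambda^{n+1}_i$ by its $n$-faces is exactly the paper's induction on the subsets $J\subset[n+1]$ via the partial horns $\Lambda^{n+1}_J$, with each step a pullback of the cover $X_n\to\pi_0(P^{\geq n}(f))$.

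One small correction: you write that $P^{\geq n}(f)_\bullet$ is a Lie $0$-groupoid, but under the hypotheses of the lemma $f$ is only an $\infty$-stack, so Theorem~\ref{thm:kmor} only gives that $P^{\geq n}(f)_\bullet$ is a Lie $\infty$-groupoid. This does not affect your argument, since the fact you actually use---that $X_n\to\pi_0(P^{\geq n}(f))$ is a cover---is part of the \emph{definition} of the orbit space and is assumed to hold by hypothesis; you should just adjust the justification accordingly.
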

\begin{proof}
  For $k<n$, the natural map $\tr_n
  X_\bullet\to\tr_n\tn(X,f)_\bullet$ induces an isomorphism between
  the maps $\lambda^k_i(f)$ and $\lambda^k_i(\tr_n\tn(f))$.  For
  $k=n$, we have a commuting square
  \begin{equation*}
    \begin{xy}
      \square<750,500>[X_n`\tn(X,f)_n`\Lambda^n_i(f)`\Lambda^n_i(\tn(f));%
      `\lambda^n_i(f)`\lambda^n_i(\tn(f))`\cong]
    \end{xy}
  \end{equation*}
  This square guarantees that the map $\lambda^n_i(\tn(f))$ is a cover
  for all $i$.  Indeed, the top horizontal map is the cover
  $X_n\to\pi_0(P^{\geq n}(f))$, the map $\lambda^n_i(f)$ is a cover by
  assumption, and the bottom horizontal map is an isomorphism.
  Axiom~\ref{axiom:fg+g} implies that $\lambda^n_i(\tn(f))$ is a
  cover.

  Lemma \ref{lemma:psrep} guarantees that, for all $i$, the limit $\Lambda^{n+1}_i(\tn(f))$ exists.  It remains to show that the map $\Lambda^{n+1}_i(f)\to\Lambda^{n+1}_i(\tn(f))$ is a cover for all $i$.

  Recall that given a proper, non-empty subset $J$ of $[n+1]$,
  \begin{equation*}
    \Lambda^{n+1}_J=\bigcup_{i\in J} \partial_i\Delta^{n+1}
    \subset \partial\Delta^{n+1} .
  \end{equation*}
  For each $J$, there is a map
  \begin{multline*}
    \Lambda^{n+1}_J(f) =
    \hom(\Lambda^{n+1}_J,X)\times_{\hom(\Lambda^{n+1}_J,Y)}Y_{n+1} \\
    \too \Lambda^{n+1}_J(\tn(f)) =
    \hom(\Lambda^{n+1}_J,\tn(X,f))\times_{\hom(\Lambda^{n+1}_J,Y)}Y_{n+1} .
  \end{multline*}
  We will show that it is a cover, by induction on $|J|$.

  Let $J_+=J\cup\{j\}$, where $j\notin J$, and let
  \begin{equation}\label{eq:Jintj}
    \Lambda^{n+1}_{J\cap j}(f) = \hom(\Lambda^{n+1}_J\cap\partial_j\Delta^{n+1},X)
    \times_{\hom(\Lambda^{n+1}_J\cap\partial_j\Delta^{n+1},Y)} Y_{n+1} .
  \end{equation}
  We have a pair of pullback diagrams in which the vertical maps are
  covers:
  \begin{equation*}
    \begin{xy}
      \Square[\Lambda^{n+1}_J(f)\times_{\Lambda^{n+1}_{J\cap j}(f)}\pi_0(P^{\le n}(f))`%
      \Lambda^{n+1}_J(f)`\Lambda^{n+1}_{J_+}(\tn(f))`%
      \Lambda^{n+1}_J(\tn(f));```]
    \end{xy}
  \end{equation*}
  and
  \begin{equation*}
    \begin{xy}
      \Square[\Lambda^{n+1}_{J_+}(f)`X_n`%
      \Lambda^{n+1}_J(f)\times_{\Lambda^{n+1}_{J\cap j}(f)}\pi_0(P^{\le n}(f))`%
      \pi_0(P^{\le n}(f));```]
    \end{xy}
  \end{equation*}
  Composing the left vertical arrows, we see that the map
  \begin{equation*}
    \Lambda^{n+1}_{J_+}(f) \too \Lambda^{n+1}_{J_+}(\tn(f))
  \end{equation*}
  is a cover; this completes the induction step.
\end{proof}

Our goal is now to construct, for any $i$, a ``missing face map''
\begin{equation*}
  d_i\colon\Lambda^{n+1}_i(\tn(f))\to\pi_0(P^{\ge n}(f))=\tn(X,f)_n .
\end{equation*}

Compose the covers $\Lambda^{n+1}_i(f)\to\Lambda^{n+1}_i(\tn(f))$ and
$\lambda^{n+1}_i(f)$ to obtain a cover
$X_{n+1}\to\Lambda^{n+1}_i(\tn(f))$.  Denote by $q d_i$ the composite
\begin{equation*}
    \begin{xy}
        \morphism[X_{n+1}`X_n;d_i]
        \morphism(500,0)[X_n`\pi_0(P^{\ge n}(f));]
    \end{xy}
\end{equation*}

\begin{lemma}\label{lemma:di2}
  The diagram
  \begin{equation}\label{eq:di}
    \begin{xy}
      \morphism/{@{>}@<3pt>}/<1000,0>[X_{n+1}\times_{\Lambda^{n+1}_i(\tn(f))}X_{n+1}`X_{n+1};]
      \morphism/{@{>}@<-3pt>}/<1000,0>[X_{n+1}\times_{\Lambda^{n+1}_i(\tn(f))}X_{n+1}`X_{n+1};]
      \morphism(1000,0)<750,0>[X_{n+1}`\pi_0(P^{\geq n}(f));qd_i]
    \end{xy}
  \end{equation}
  commutes.
\end{lemma}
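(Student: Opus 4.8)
The diagram \eqref{eq:di} commutes if and only if the two composites
\begin{equation*}
  X_{n+1}\times_{\Lambda^{n+1}_i(\tn(f))}X_{n+1}\rightrightarrows X_{n+1}\xrightarrow{\ qd_i\ }\pi_0(P^{\geq n}(f))
\end{equation*}
agree, that is, if and only if $qd_i$ descends along the cover $p\colon X_{n+1}\to\Lambda^{n+1}_i(\tn(f))$ obtained by composing $\lambda^{n+1}_i(f)$ with the cover $\Lambda^{n+1}_i(f)\to\Lambda^{n+1}_i(\tn(f))$ of Lemma~\ref{lemma:di1}; here $qd_i=q\circ d_i$, with $q\colon X_n\to\pi_0(P^{\geq n}(f))$ the orbit map. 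Unwinding the fibre product, a generalized point of the source is a pair $(x,x')$ of $(n+1)$-simplices of $X$ with $f(x)=f(x')$ and $q(d_jx)=q(d_jx')$ for every $j\neq i$; since the relation $\p_0,\p_1\colon P^{\geq n}(f)_1\rightrightarrows X_n$ preserves the boundary of an $n$-simplex (a short simplicial-identity check), this also forces $d_ix$ and $d_ix'$ to share a boundary. So the content of the lemma is the relative, Lie-theoretic avatar of the classical fact that, in a Kan complex, two $(n+1)$-simplices agreeing on all faces but the $i$-th have $i$-th faces that are homotopic rel boundary.

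Because $\pi_0(P^{\geq n}(f))$ is the coequalizer of $\p_0,\p_1\colon P^{\geq n}(f)_1\rightrightarrows X_n$, it suffices to exhibit a cover $r\colon\widetilde Z\to Z$ (write $Z$ for the fibre product above) together with a finite zigzag of maps $\widetilde Z\to P^{\geq n}(f)_1$ whose endpoints chain $d_i\pr_1 r$ to $d_i\pr_2 r$: then $q\,d_i\pr_1 r=q\,d_i\pr_2 r$, and since $r$ is an effective epimorphism (Axiom~\ref{axiom:subcan}) we conclude $q\,d_i\pr_1=q\,d_i\pr_2$, as desired. Such maps are produced by the classical ``$\pi_n$ is well defined'' horn-filling argument, rendered canonical as follows. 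First reduce to the case in which $d_jx=d_jx'$ on the nose for $j\neq i$: using the inductive construction of $\Lambda^{n+1}_i(f)\to\Lambda^{n+1}_i(\tn(f))$ in Lemma~\ref{lemma:di1} (which applies $q$ one face at a time) together with the horn fillers of the Lie $\infty$-groupoids $X$ and $P^{\geq n}(f)$ (covers, by Definition~\ref{defi:ngpd} and Theorem~\ref{thm:kmor}), one replaces, over a cover of $Z$, the second simplex $x'$ by a simplex $x''$ with $d_jx''=d_jx$ for $j\neq i$, with $f(x'')=f(x')$, and with an edge of $P^{\geq n}(f)$ between $d_ix''$ and $d_ix'$. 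In the reduced case one writes down, from $(x,x')$, the relative horn in $\Lambda^{n+2}_{n+2}(f)$ whose $k$-th face is $s_{n-1}d_kx$ for $k<n$, whose $n$-th face is $x$, whose $(n+1)$-st face is $x'$, and which carries the degenerate simplex $s_nf(x)\in Y_{n+2}$ — this is a well-defined map $Z\to\Lambda^{n+2}_{n+2}(f)$ — and pulls $Z$ back along the cover $\lambda^{n+2}_{n+2}(f)$ to obtain a cover of $Z$ carrying a tautological $(n+2)$-simplex $w$ of $X$. The simplicial identities then show that $d_{n+2}w$ lies in $P^{\geq n}(f)_1$ (Definition~\ref{defi:relkmor}) and is an edge between $d_ix$ and $d_ix'$; this handles $i=n+1$, and the other values of $i$ are handled by the analogous reindexed horn.

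The main obstacle is precisely this construction: singling out, for each $i$, the horn whose forced missing face is exactly the wanted path in $P^{\geq n}(f)$, and carrying out the argument naturally in $(x,x')$. The device that turns the classically non-canonical horn fillers into honest morphisms is to pull $Z$ back along the horn-filler covers $\lambda^m_k(f)$ (and, in the reduction step, along covers built from $q$ and from fillers in $P^{\geq n}(f)$), extract the equality of orbit classes over the pullback, and then descend, using that the pullback projections are effective epimorphisms. The remaining work — the combinatorics of joins and faces, and the bookkeeping for general $i$ and for the reduction to on-the-nose agreement of the non-$i$ faces — is routine once this pattern is fixed.
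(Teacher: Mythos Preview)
Your overall strategy matches the paper's: enrich a pair $(x,x')\in Z:=X_{n+1}\times_{\Lambda^{n+1}_i(\tn(f))}X_{n+1}$ by explicit edges $p_j\in P^{\ge n}(f)_1$ witnessing $q(d_jx)=q(d_jx')$ for $j\ne i$, use these to replace $x'$ by a simplex sharing the $\Lambda^{n+1}_i$-horn of $x$, and then fill a single $(n+2)$-horn to obtain the desired edge between the $i$-th faces. The horn-filling portion of your argument (the ``reduced case'') is correct and is essentially the paper's construction of $K_{n+3}\to\cdots\to\mathbb{K}$.

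The gap is in the reduction step. You assert that one can carry this out ``over a cover of $Z$'', but producing the witnesses $p_j$ amounts to lifting along
\[
  (\p_0,\p_1)\colon P^{\ge n}(f)_1 \longrightarrow X_n\times_{\pi_0(P^{\ge n}(f))}X_n,
\]
and nothing in the hypotheses guarantees that this map is a cover. Knowing that $P^{\ge n}(f)_\bullet$ is a Lie $\infty$-groupoid (Theorem~\ref{thm:kmor}) only controls its horn maps; knowing that $X_n\to\pi_0(P^{\ge n}(f))$ is a cover says nothing about $(\p_0,\p_1)$. So the object $\widetilde Z$ you build (call it $\mathbb{K}$: pairs $(x,x')$ together with the $p_j$'s) exists, but the projection $\mathbb{K}\to Z$ is not known to be a cover, and your appeal to Axiom~\ref{axiom:subcan} does not apply.

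This is precisely the point the paper isolates. It works under the additional standing assumption that $\C$ has \emph{enough points} (stated in the Remark immediately following the lemma), and uses it as follows: the map $g\colon\mathbb{K}\to Z$ sits in a pullback square over $(P^{\ge n}(f)_1)^{\times(n+1)}\to(X_n\times_{\pi_0}X_n)^{\times(n+1)}$; at any point $p\colon\C\to\st$ the latter map is surjective because $p(P^{\ge n}(f)_\bullet)$ is Kan, hence $p(g)$ is surjective; enough points then forces $g$ to be an epimorphism. That is all one needs: from $qd_i\pr_1\, g=qd_i\pr_2\, g$ and $g$ epi one concludes $qd_i\pr_1=qd_i\pr_2$. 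Your write-up should either invoke this hypothesis and argue that $\widetilde Z\to Z$ is an epimorphism (not a cover), or supply an independent reason why $(\p_0,\p_1)$ is a cover in your setting.
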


\begin{remark}
  Recall that a \emph{point} of $\C$ is a functor $p\colon\C\to\st$
  which preserves finite limits, which preserves arbitrary colimits,
  and which takes covers to surjections.  We say $\C$ has \emph{enough
    points} if for every pair of maps $f\neq g$ in $\C$, there exists
  a point $p$ such that $p(f)\neq p(g)$.

  We prove the lemma under the assumption that $\C$ has enough points.
  This assumption is satisfied in many examples of interest, and has
  the benefit of allowing for an elementary proof.

  One could proceed without this assumption by using Ehresman's theory of
  sketches in combination with Barr's theorem on the existence of a
  Boolean cover of the topos of sheaves on $\C$.  The latter approach
  is discussed in \cite[Section 2 -- ``For Logical Reasons'']{Bek:04}
  or in more depth in \cite[Chapter 7, especially 7.5]{Joh:77}.
\end{remark}

\begin{proof}
  To show that \ref{eq:di} commutes, we construct an epi
  \begin{equation*}
        \mathbb{K}\to^g X_{n+1}\times_{\Lambda^{n+1}_i(\tn(f))}X_{n+1}
  \end{equation*}
  and we show that the diagram
  \begin{equation}\label{eq:difat}
    \begin{xy}
        \morphism/{@{>}@<3pt>}/[\mathbb{K}`X_{n+1};]
        \morphism/{@{>}@<-3pt>}/[\mathbb{K}`X_{n+1};]
        \morphism(500,0)<750,0>[X_{n+1}`\pi_0(P^{\geq n}(f));qd_i]
    \end{xy}
  \end{equation}
  commutes. This implies that \ref{eq:di} commutes.

  {\bf Step 1: Construct $g$.}

  By way of motivation, the fork
  \begin{equation*}
    \begin{xy}
      \morphism/{@<3pt>}/[\mathbb{K}`X_{n+1};]
      \morphism/{@<-3pt>}/[\mathbb{K}`X_{n+1};]
    \end{xy}
  \end{equation*}
  can be understood as a ``fattened version'' of the kernel pair of the cover $X_{n+1}\to\Lambda^{n+1}_i(\tn(f))$.

  In more detail, sections of $X_{n+1}\times_{\Lambda^{n+1}_i(\tn(f))} X_{n+1}$ consist of pairs of $n+1$-simplices of $X$ which live over the same $n+1$-simplex of $Y$ and such that their $i^{th}$-horns are homotopic rel boundary over $Y$. We will construct $\mathbb{K}$ by adding in the data of the homotopies. We start by defining a space of homotopies $\mathbb{H}^{n+1}_i(f)$ as the pullback
  \begin{equation*}
    \begin{xy}
      \morphism(0,0)<1750,0>[\mathbb{H}^{n+1}_i(f)`(P^{\ge
        n}(f)_1)^{\times(n+1)};]
      \morphism(0,0)<0,-1000>[\mathbb{H}^{n+1}_i(f)`\Lambda^{n+1}_i(f);(d_0)^{\times(n+1)}]
      \morphism(1750,0)|r|<0,-500>[(P^{\ge
        n}(f)_1)^{\times(n+1)}`(X_n)^{\times(n+1)};(d_0)^{\times(n+1)}]
      \morphism(1750,-500)<0,-500>[(X_n)^{\times(n+1)}`(\pi_0(P^{\ge
        n}(f)))^{\times(n+1)};]
      \morphism(0,-1000)<750,0>[\Lambda^{n+1}_i(f)`(X_n)^{\times(n+1)};]
      \morphism(750,-1000)<1000,0>[(X_n)^{\times(n+1)}`(\pi_0(P^{\ge
        n}(f)))^{\times(n+1)};]
    \end{xy}
  \end{equation*}
  Observe that the pullback exists because the right vertical maps are both covers. Observe that a section of $\mathbb{H}^{n+1}_i(f)$ consists of a pair of relative $i^{th}$ $(n+1)$-horns of $f$ along with choices of homotopies rel boundary over $Y$ between the $j^{th}$-faces of each relative horn for $j\neq i$.

  We now glue this data in to form $\mathbb{K}$. For this, note that in addition to the map $(d_0)^{\times(n+1)}\colon\mathbb{H}^{n+1}_i(f)\to\Lambda^{n+1}_i(f)$, there is another map $(d_1)^{\times(n+1)}\colon\mathbb{H}^{n+1}_i(f)\to\Lambda^{n+1}_i(f)$. Define $\mathbb{K}$ to be the iterated pullback
  \begin{equation*}
    \begin{xy}
      \morphism(0,1000)<0,-500>[\mathbb{K}`X_{n+1}\times_{\Lambda^{n+1}_i(f)}\mathbb{H}^{n+1}_i(f);]
      \morphism(0,1000)<1850,0>[\mathbb{K}`X_{n+1};]
      \morphism(1850,1000)|r|<0,-500>[X_{n+1}`\Lambda^{n+1}_i(f);\lambda^{n+1}_i(f)]
      \morphism(1000,500)<850,0>[\mathbb{H}^{n+1}_i(f)`\Lambda^{n+1}_i(f);(d_1)^{\times(n+1)}]
      \square<1000,500>[X_{n+1}\times_{\Lambda^{n+1}_i(f)}\mathbb{H}^{n+1}_i(f)`\mathbb{H}^{n+1}_i(f)`X_{n+1}`\Lambda^{n+1}_i(f);``(d_0)^{\times(n+1)}`\lambda^{n+1}_i(f)]
    \end{xy}
  \end{equation*}
  The limit $\mathbb{K}$ exists because $\lambda^{n+1}_i(f)$ is a cover ($f$ is an $\infty$-stack).  Observe that $\mathbb{K}$ encodes the data of pairs $(x_0,x_1)$ of $(n+1)$-simplices in the same fiber of $f$ and explicit fiber homotopies rel boundary $(p_j)_{0\le j\ne i}^{n+1}$ between all but their $i^{th}$ faces. The projections along the left and right $X_{n+1}$ factors induce a map
  \begin{equation*}
    \begin{xy}
      \morphism<1000,0>[\mathbb{K}`X_{n+1}\times_{\Lambda^{n+1}_i(\tn(f))}X_{n+1};g]
    \end{xy}
  \end{equation*}
  {\bf Step 2: Prove that $g$ is an epi.}

  Because we assume that $\C$ has enough points, it suffices to check that the map $p(g)$ is a surjection for any point $p\colon\C\to\st$.  A point $p$ preserves finite limits and arbitrary colimits, and takes covers to surjections.  As a result, $p$ takes each construction we have been considering to its analogue in the category $\st$.

  It suffices to show that if $f\colon X_{\bullet}\to Y_{\bullet}$ is a Kan fibration of simplicial sets, then the map
  \begin{equation*}
    \begin{xy}
        \morphism<1850,0>[\mathbb{K}= X_{n+1}\times_{\Lambda^{n+1}_i(f)}\mathbb{H}^{n+1}_i(f)\times_{\Lambda^{n+1}_1(f)}X_{n+1}`X_{n+1}\times_{\Lambda^{n+1}_i(\tn(f))}X_{n+1};g]
    \end{xy}
  \end{equation*}
  is surjective.  This map fits into a pullback square
  \begin{equation*}
    \begin{xy}
        \square<1500,500>[\mathbb{K}`(P^{\ge n}(f)_1)^{\times(n+1)}`X_{n+1}\times_{\Lambda^{n+1}_i(\tn(f))}X_{n+1}`(X_n\times_{\pi_0 (P^{\ge n}(f))} X_n)^{\times(n+1)};`g``]
    \end{xy}
  \end{equation*}
  The map
  \begin{equation*}
    P^{\ge n}(f)_1\to X_n\times_{\pi_0 (P^{\ge n}(f))} X_n
  \end{equation*}
  is surjective because the simplicial set $P^{\ge n}(f)_\bullet$ is Kan. As a result, the map $g$ is surjective, because surjections of sets are preserved under products and pullbacks.

  {\bf Step 3: Show that the square \eqref{eq:difat} commutes.}

  We will construct a sequence of covers
  \begin{equation}\label{eq:diseq}
    K_{n+3}\to\cdots\to K_0=\mathbb{K}
  \end{equation}
  fitting into a commuting square
  \begin{equation}\label{eq:diK}
    \begin{xy}
      \square(0,-250)<750,500>[K_{n+3}`P^{\ge n}(f)_1`\mathbb{K}`X_n\times X_n;h`c`(d_0,d_1)`(d_i,d_i)g]
    \end{xy}
  \end{equation}
  By way of motivation, we note that the restriction of this sequence of covers to a given section $(x_0,x_1,(p_j)_{0\le j\ne i}^{n+1})$ of $\mathbb{K}$ will encode the process of homotoping the $i^{th}$ horn of $x_1$ to the $i^{th}$-horn of $x_0$ one face at a time, using the specified homotopies $p_j$, and then using this to produce a homotopy between the $i^{th}$ faces of $x_0$ and $x_1$.

  Granting the existence of the sequence \eqref{eq:diseq} and the square \eqref{eq:diK}, we conclude the lemma as follows. Recall that $q$ denotes the map $X_n\too\pi_0(P^{\ge n}(f))$.  By definition,
  \begin{equation*}
    qd_0=qd_1\colon P^{\ge n}(f)_1\too\pi_0(P^{\ge n}(f))
  \end{equation*}
  The square \ref{eq:diK} implies that
  \begin{align*}
    qd_i\pr_1gc&=qd_0h\\
    &=qd_1h\\
    &=qd_i\pr_2gc
  \end{align*}
  The map $c\colon K_{n+3}\too\mathbb{K}$ is an epimorphism, because it is a cover (Axiom \ref{axiom:subcan}).  We conclude that
  \begin{equation*}
    qd_i\pr_1g=qd_i\pr_2g,
  \end{equation*}
  or equivalently, that \ref{eq:difat} commutes.

  {\bf Step 3a: the induction.}

  We now construct \ref{eq:diseq}.  In detail, a section of $\mathbb{K}$ is a tuple $(x_0,x_1,(p_j)_{0\le j\ne i}^{n+1})$ where
  \begin{align*}
    x_k&\in X_{n+1},\text{ and}\\
    p_j&\in P^{\ge n}(f)_1,\intertext{such that}
    f(x_0)&=f(x_1),\intertext{and, for $k=0,1$, and all $j$,}
    d_{n+k}p_j&=d_j x_k.
  \end{align*}
  Equivalently, a section of $\mathbb{K}$ is a tuple $(x_0,x_1,(p_j)_{j=0}^{n+1})$ where $(x_0,x_1,(p_j)_{0\le j\ne i}^{n+1})$ satisfies the conditions above, and $p_i=s_nd_ix_1$. Note further that, for $i<n$
  \begin{align}
    d_i p_j &= s_{n-1}d_{n-1}d_i p_j\nonumber\\
        &= s_{n-1}d_n d_i p_j\nonumber\\
        &= s_{n-1}d_i d_{n+1}p_j\nonumber\\
        &= s_{n-1}d_i d_j x_1.\label{eq:forkhorn}
  \end{align}

  For the base of the induction, Equation \eqref{eq:forkhorn} implies that the assignment
  \begin{equation*}
    (x_0,x_1,(p_j)_{j=0}^{n+1})\mapsto((d_0s_nx_1,\ldots,d_{n-1}s_nx_1,-,x_1,p_{n+1}),s_nf(x_1))
  \end{equation*}
  defines a map $\mathbb{K}\to\Lambda^{n+2}_n(f)$. Denote by $K_1$ the pullback
  \begin{equation*}
    K_1:=\mathbb{K}\times_{\Lambda^{n+2}_n(f)}X_{n+2}
  \end{equation*}
  The projection $K_1\to\mathbb{K}$ is a cover, because it is the pullback of the cover $\lambda^{n+2}_n(f)$. Similarly, if we denote a section of $K_1$ by $(x_0,x_1,(p_j)_{j=0}^{n+1},z_{n+1})$, then Equation \eqref{eq:forkhorn} implies that the assignment
  \begin{equation*}
    (x_0,x_1,(p_j)_{j=0}^{n+1},z_{n+1})\mapsto((d_0s_{n+1}x_1,\ldots,d_{n-1}s_{n+1}x_1,p_n,-,d_n z_{n+1}),s_nf(x_1))
  \end{equation*}
  defines a map $K_1\to\Lambda^{n+2}_{n+1}(f)$. Denote by $K_2$ the pullback
  \begin{equation*}
    K_2:=K_1\times_{\Lambda^{n+2}_{n+1}(f)}X_{n+2}
  \end{equation*}
  The projection $K_2\to K_1$ is a cover, because it is the pullback of the cover $\lambda^{n+2}_{n+1}(f)$.

  For the inductive step suppose that we have constructed a cover $K_\ell\to K_{\ell-1}$, such that sections of $K_\ell$ are tuples $(x_0,x_1,(p_j)_{j=0}^{n+1},(z_j)_{j=n+2-\ell}^{n+1})$ with
    \begin{align*}
        (x_0,x_1,(p_j)_{j=0}^{n+1},(z_j)^{n+1}_{n+3-\ell})&\in K_{\ell-1},\text{ and}\\
        z_{n+2-\ell}&\in X_{n+2},\intertext{such that}
        f(z_{n+2-\ell})&=s_{n+1}d_{n+2}f(z_{n+2-\ell}),\text{ and}\\
        d_i z_{n+2-\ell}&=\left\{
            \begin{array}{rl}
                d_is_{n+1}d_{n+1}z_{n+3-\ell} & i\neq n+2-\ell,n+1\\
                p_{n+2-\ell} & i=n+2-\ell \\
                d_n z_{n+3-\ell} & i=n+1
            \end{array}
            \right.
    \end{align*}
    Then Equation \eqref{eq:forkhorn} implies that the assignment
    \begin{equation*}
        \begin{xy}
            \morphism/|->/<1000,0>[(x_0,x_1,(p_j)_{j=0}^{n+1},(z_j)_{j=n+2-\ell}^{n+1})`;]
            \morphism(500,-250)/{}/[((d_0s_{n+1}d_{n+1}z_{n+2-\ell},\ldots,d_{n-\ell}s_{n+1}d_{n+1}z_{n+2-\ell},p_{n+1-\ell},d_{n+2-\ell}s_{n+1}d_{n+1}z_{n+2-\ell},`;]
            \morphism(500,-500)/{}/[`\ldots,d_ns_{n+1}d_{n+1}z_{n+2-\ell},-,d_{n+1}z_{n+2-\ell}),s_{n+1}d_{n+1}f(z_{n+2-\ell}));]
        \end{xy}
    \end{equation*}
    defines a map $K_\ell\to\Lambda^{n+2}_{n+1}(f)$. Denote by $K_{\ell+1}$ the pullback
    \begin{equation*}
        K_{\ell+1}:=K_\ell\times_{\Lambda^{n+2}_{n+1}(f)}X_{n+2}
    \end{equation*}
    This completes the induction step for $\ell<n+2$.

    The induction above demonstrates the existence of the sequence of covers
    \begin{equation*}
        K_{n+2}\to\cdots \to K_0=\mathbb{K}
    \end{equation*}
    The construction allows us to denote a section of $K_{n+2}$ by
    \begin{align*}
        &(x_0,x_1,(p_j)_{0\le j\ne i}^{n+1},(z_j)_{j=0}^{n+1})\intertext{where}
        &s_{n+1}f(x_0)=f(z_0),\intertext{and, for $j\ne i$,}
        &d_jx_0=d_jd_{n+1}z_0.
    \end{align*}
    For $i\ne n+1$, the assignment
    \begin{multline*}
        (x_0,x_1,(p_j)_{0\le j\ne i}^{n+1},(z_j)_{j=0}^{n+1})\mapsto\\
            ((d_0s_{n+1}x_0,\ldots,d_{i-1}s_{n+1}x_0,-,d_{i+1}s_{n+1}x_0,\ldots,d_ns_{n+1}x_0,x_0,d_{n+1}z_0),s_{n+1}f(x_0))
    \end{multline*}
    defines a map $K_{n+2}\to\Lambda^{n+2}_i(f)$. Denote by $K_{n+3}$ the pullback
    \begin{equation*}
        K_{n+3}:=K_{n+2}\times_{\Lambda^{n+2}_i(f)}X_{n+2}
    \end{equation*}

    Similarly, for $i=n+1$, the assignment
    \begin{equation*}
        (x_0,x_1,(p_j)_{j=0}^n,(z_j)_{j=0}^n)\mapsto((d_0s_nx_0,\ldots,d_{n-1}s_nx_0,x_0,d_{n+1}z_0,-),s_nf(x_0))
    \end{equation*}
    defines a map $K_{n+2}\to\Lambda^{n+2}_{n+2}(f)$. Denote by $K_{n+3}$ the pullback
    \begin{equation*}
        K_{n+3}:=K_{n+2}\times_{\Lambda^{n+2}_{n+2}(f)}X_{n+2}
    \end{equation*}
    Note that, for any $i$, the map $K_{n+3}\to K_{n+2}$ is a cover, because it is a pullback of the cover $\lambda^{n+2}_j(f)$ (where $j=i$ if $i<n+1$ and $j=n+2$ if $i=n+1$).  Denote a section of $K_{n+3}$ by $(\mathbf{x},w)$ where $\mathbf{x}\in K_{n+2}$ and $w\in X_{n+2}$.  The construction, in all cases, guarantees that the assignment
    \begin{equation*}
        (\mathbf{x},w)\mapsto d_iw
    \end{equation*}
    defines a map $h\colon K_{n+3}\too P^{\ge n}(f)_1$ which, along with $c\colon K_{n+3}\too\mathbb{K}$, gives \ref{eq:diK}.
\end{proof}

By Axiom \ref{axiom:subcan}, \ref{eq:di} determines a map
\begin{equation*}
    \Lambda^{n+1}_i(\tn(f)) \to^{d_i} \tn(X,f)_n ,
\end{equation*}
We now extend \ref{eq:ntn} to a map
\begin{equation*}
    \tr_{n+1}\tn(f)\colon\tr_{n+1}\tn(X,f)_\bullet\to\tr_{n+1}Y_\bullet
\end{equation*}
On $k$-simplices, for $k\le n$, the map $\tr_{n+1}\tn(f)$ equals the map $\tr_n\tn(f)$.  On $(n+1)$-simplices, $\tr_{n+1}\tn(f)$ is the canonical map
\begin{equation*}
    \Lambda^{n+1}_1(\tn(f))\to Y_{n+1}
\end{equation*}
The missing face map
\begin{equation*}
    \tr_{n+1}\tn(X,f)_{n+1}=\Lambda^{n+1}_1(\tn(f))\to^{d_1}\pi_0(P^{\ge n}(f))=\tr_{n+1}\tn(X,f)_n
\end{equation*}
makes $\tr_{n+1}\tn(X,f)_\bullet$ into an $(n+1)$-truncated simplicial object.

\begin{definition}
  Let $f\colon X_\bullet\to Y_\bullet$ be an $\infty$-stack such that
  $\pi_0(P^{\ge n}(f))$ exists.  Define $\tn(X,f)_\bullet$ to be the limit
  \begin{equation*}
    \tn(X,f)_\bullet:=\csk_{n+1}\tr_{n+1}\tn(X,f)_\bullet\times_{\Csk_{n+1}Y_\bullet}Y_\bullet
  \end{equation*}
  The \emph{$n$-strictification of $f$} is the map
  \begin{equation*}
    \tn(f)\colon\tn(X,f)_\bullet\to Y_\bullet
  \end{equation*}
\end{definition}

\begin{proposition}\label{prop:tn}
  Let $f\colon X_\bullet\to Y_\bullet$ be an $\infty$-stack such that
  $\pi_0(P^{\ge n}(f))$ exists.  The $n$-strictification
  $\tn(f)\colon \tn(X,f)_\bullet\to Y_\bullet$ is an $n$-stack. The
  maps $f$ and $\tn(f)$ are isomorphic if and only if $f$ is an
  $n$-stack.
\end{proposition}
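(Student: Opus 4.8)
The plan is to verify the conditions of Definition~\ref{defi:nst} for $\tn(f)$ degree by degree, using the preparatory lemmas, and then to read off the equivalence. First observe that in degrees $\le n+1$ the simplicial object $\tn(X,f)_\bullet$ coincides with the $(n+1)$-truncated object $\tr_{n+1}\tn(X,f)_\bullet$ already constructed: the coskeleton and the fibre product with $Y_\bullet$ affect only degrees $>n+1$, since $\Csk_{n+1}Y_\bullet$ agrees with $Y_\bullet$ through degree $n+1$. Hence for $0<k\le n$ and $0\le i\le k$ the limit $\Lambda^k_i(\tn(f))$ exists and $\lambda^k_i(\tn(f))$ is a cover, directly by Lemma~\ref{lemma:di1}; that lemma also supplies the existence of $\Lambda^{n+1}_i(\tn(f))$ for every $i$, and $\tn(X,f)_{n+1}=\Lambda^{n+1}_1(\tn(f))$ by construction.

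The heart of the matter is degree $n+1$, where we must show that $\lambda^{n+1}_i(\tn(f))$ is an isomorphism for every $i$. For $i=1$ it is an identity. For $i\ne1$, I would write down the inverse using the missing face maps $d_j\colon\Lambda^{n+1}_j(\tn(f))\to\tn(X,f)_n$ of Lemma~\ref{lemma:di2}: a relative $\Lambda^{n+1}_i$-horn $w=(w_j)_{j\ne i}$ is sent to the element of $\tn(X,f)_{n+1}=\Lambda^{n+1}_1(\tn(f))$ with faces $w_j$ for $j\ne 1,i$ and $d_i(w)$ in slot~$i$. Showing that this is a well-defined relative $\Lambda^{n+1}_1$-horn and a two-sided inverse of $\lambda^{n+1}_i(\tn(f))$ amounts to a list of compatibilities among the $d_j$ and the structure maps already defined in lower degrees --- in effect, that $\tr_{n+1}\tn(X,f)_\bullet$ satisfies the simplicial identities in top degree and that the $d_j$ are mutually consistent. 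Since each $\lambda^{n+1}_j(f)$, and hence each composite $X_{n+1}\to\Lambda^{n+1}_j(\tn(f))$, is a cover and therefore (Axiom~\ref{axiom:subcan}) an effective epimorphism, every such identity may be checked after pulling back to $X_{n+1}$, where it reduces to a genuine simplicial identity of $X_\bullet$ together with the defining property of the maps $d_j$. I expect this bookkeeping --- tracking all the $d_j$ and the epimorphisms $\Lambda^{n+1}_j(f)\to\Lambda^{n+1}_j(\tn(f))$ simultaneously --- to be the main obstacle; it is in the same spirit as the proof of Lemma~\ref{lemma:di2}.

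For degrees $k>n+1$ the object $\tn(X,f)_\bullet$ is, by construction, $(n+1)$-coskeletal relative to $Y_\bullet$, so $\tn(X,f)_k\cong M_k(\tn(f))$. Existence of $\Lambda^k_i(\tn(f))$ follows from Lemma~\ref{lemma:psrep} together with the fact that $Y_\bullet$ is a Lie $\infty$-groupoid, exactly as in the proof of Theorem~\ref{thm:hk1}. For $k\ge n+3$ the map $\lambda^k_i(\tn(f))$ is an isomorphism because the missing $(k-1)$-face of a relative boundary sphere already lies in the coskeletal range and is therefore pinned down by the remaining faces via the simplicial identities; for $k=n+2$ one needs the same kind of consistency check as in the previous paragraph, since the missing $(n+1)$-face is modelled by $\Lambda^{n+1}_1(\tn(f))$ rather than by a full boundary, and again one descends along the covers from $X_\bullet$. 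This completes the proof that $\tn(f)$ is an $n$-stack.

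For the equivalence: if $f\cong\tn(f)$ then $f$ is an $n$-stack by what we have just shown. Conversely, suppose $f$ is an $n$-stack. Then $P^{\ge n}(f)_\bullet$ is a Lie $0$-groupoid by Theorem~\ref{thm:kmor}, hence a constant simplicial object, so $\pi_0(P^{\ge n}(f))\cong P^{\ge n}(f)_0=X_n$ with the canonical map $X_n\to\pi_0(P^{\ge n}(f))$ an isomorphism; likewise $\lambda^{n+1}_1(f)$ is an isomorphism, so $\tn(X,f)_{n+1}=\Lambda^{n+1}_1(f)\cong X_{n+1}$. Thus the natural map induces an isomorphism $\tr_{n+1}X_\bullet\cong\tr_{n+1}\tn(X,f)_\bullet$. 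Iterating diagram~\eqref{mulambda} shows that an $n$-stack is $(n+1)$-coskeletal relative to its target, whence
\begin{equation*}
  X_\bullet\cong\csk_{n+1}\tr_{n+1}X_\bullet\times_{\Csk_{n+1}Y_\bullet}Y_\bullet\cong\csk_{n+1}\tr_{n+1}\tn(X,f)_\bullet\times_{\Csk_{n+1}Y_\bullet}Y_\bullet=\tn(X,f)_\bullet
\end{equation*}
over $Y_\bullet$; the natural map $X_\bullet\to\tn(X,f)_\bullet$ agrees with this chain of isomorphisms in degrees $\le n+1$, so it is an isomorphism, and $f\cong\tn(f)$.
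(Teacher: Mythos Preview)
Your outline matches the paper's proof closely. The only organizational difference worth flagging is in degree $n+1$: the paper routes the argument through the relative matching object $M_{n+1}(\tn(f))$, writing $\lambda^{n+1}_i(\tn(f))=d_{\hat\imath}\circ(1,d_1)$ and exhibiting $d_{\hat1}\circ(1,d_i)$ as its inverse, then appeals to ``the proof of Lemma~\ref{lemma:di2}'' for the two identities $(1,d_1)d_{\hat1}(1,d_i)=(1,d_i)$ and $(1,d_i)d_{\hat\imath}(1,d_1)=(1,d_1)$. Your suggestion to check these after precomposing with the cover $X_{n+1}\to\Lambda^{n+1}_i(\tn(f))$ is in fact cleaner than that appeal: the point is that a single $x\in X_{n+1}$ simultaneously lifts every horn appearing in the composite, so each occurrence of a missing-face map evaluates by its defining property to $q\,d_j^X(x)$, and the identities become tautological. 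For the final equivalence the paper avoids your coskeletal reformulation and argues more directly: once the $n$-skeleta of $f$ and $\tn(f)$ agree, so do their $\Lambda^{n+1}_i$-horns, and since both have $\lambda^k_i$ an isomorphism for $k>n$ one climbs inductively; this sidesteps having to verify that the limits $M_k(f)$ exist in $\C$.
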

\begin{proof}
  We show that $\tn(f)$ is an $n$-stack. Lemma \ref{lemma:di1} established that $\lambda^k_i(\tn(f))$ is a cover for $k\le n$ and all $i$.  We now show that the map $\lambda^{n+1}_i(\tn(f))$ is an isomorphism for all $i$.  The inclusion
  $\Lambda^{n+1}_i\hookrightarrow\partial\Delta^{n+1}$ induces a map
  \begin{equation*}
    d_{\hat{\imath}}\colon M_{n+1}(\tn(f))\to\Lambda^{n+1}_i(\tn(f))
  \end{equation*}
  Observe that, in the notation of \ref{eq:Jintj}, if $J=[n+1]\setminus\{i\}$, then
  \begin{equation*}
    M_{n+1}(\tn(f))\cong \Lambda^{n+1}_i(\tn(f))\times_{\Lambda^{n+1}_{J\cap i}(f)}\pi_0(P^{\ge n}(f))
  \end{equation*}
  The missing face map $d_i\colon\Lambda^{n+1}_i(\tn(f))\to\pi_0(P^{\ge n}(f))$ induces a map
  \begin{equation*}
    (1,d_i)\colon\Lambda^{n+1}_i(\tn(f))\to M_{n+1}(\tn(f))
  \end{equation*}
  Note that the map $(1,d_i)$ is a right inverse for the map $d_{\hat{\imath}}$.

  For all $i$, the map $\lambda^{n+1}_i(\tn(f))$ factors as the composite
    \begin{equation*}
        \begin{xy}
            \morphism|a|<1250,0>[\tn(X,f)_{n+1}=\Lambda^{n+1}_1(\tn(f))`M_{n+1}(\tn(f));(1,d_1)]
            \morphism(1250,0)|a|<1000,0>[M_{n+1}(\tn(f))`\Lambda^{n+1}_i(\tn(f));d_{\hat{\imath}}]
        \end{xy}
    \end{equation*}
  These maps fit into a commuting diagram
  \begin{equation*}
    \begin{xy}
        \Atrianglepair|lmraa|/>`>`>`{@<3pt>}`{@<3pt>}/<1000,500>[X_{n+1}`\Lambda^{n+1}_1(\tn(f))`M_{n+1}(\tn(f))`\Lambda^{n+1}_i(\tn(f));```(1,d_1)`d_{\hat{\imath}}]
        \morphism(1000,0)|b|/{@<3pt>}/<-1000,0>[M_{n+1}(\tn(f))`\Lambda^{n+1}_1(\tn(f));d_{\hat{1}}]
        \morphism(2000,0)|b|/{@<3pt>}/<-1000,0>[\Lambda^{n+1}_i(\tn(f))`M_{n+1}(\tn(f));(1,d_i)]
    \end{xy}
  \end{equation*}
  The proof of Lemma \ref{lemma:di2} shows that
  \begin{align*}
    (1,d_1)\circ d_{\hat{1}}\circ(1,d_i)&=(1,d_i)\\
    (1,d_i)\circ d_{\hat{\imath}}\circ(1,d_1)&=(1,d_1)
  \end{align*}
  We conclude that
  \begin{align*}
    (d_{\hat{1}}(1,d_i))\circ\lambda^{n+1}_i(\tn(f))&=(d_{\hat{1}}(1,d_i))\circ(d_{\hat{\imath}}(1,d_1))\\
    &=d_{\hat{1}}\circ(1,d_1)\\
    &=1_{\Lambda^{n+1}_1(\tn(f))}.
  \end{align*}
  Similarly,
  \begin{align*}
    \lambda^{n+1}_i(\tn(f))\circ d_{\hat{1}}(1,d_i)&=d_{\hat{\imath}}(1,d_1)\circ d_{\hat{1}}(1,d_i)\\
    &=1_{\Lambda^{n+1}_i(\tn(f))}.
  \end{align*}
  We have shown that $\lambda^{n+1}_i(\tn(f))$ is an isomorphism for all $i$.

  Lemma \ref{lemma:psrep} guarantees that the limit $\Lambda^{n+2}_i(\tn(f))$ exists. Note that a section of $\Lambda^{n+2}_i(\tn(f))$ consists of a tuple
  \begin{equation*}
    \{x_j\}_{j\neq i}\in \tn(X,f)_{n+1}^{\times(n+2)}=\Lambda^{n+1}_1(\tn(f))^{\times(n+2)}\cong M_{n+1}(\tn(f))^{\times(n+2)}
  \end{equation*}
  such that, for $k<j$, $d_kx_j= d_{j-1}x_k$, while a section of $\tn(X,f)_{n+2}=M_{n+2}(\tn(f))$ consists of a tuple
  \begin{equation*}
    \{x_j\}\in \tn(X,f)_{n+1}^{\times(n+3)}=M_{n+1}(\tn(f))^{\times(n+3)}
  \end{equation*}
  such that, for $k<j$, $d_kx_j=d_{j-1}x_k$. In particular, the map
  \begin{equation*}
    \Lambda^{n+2}_i(\tn(f))\to^{\partial d_i} M_{n+1}(\tn(f))
  \end{equation*}
  induces an embedding
  \begin{align*}
    \Lambda^{n+2}_i(\tn(f))&\hookrightarrow M_{n+2}(\tn(f))=\tn(X,f)_{n+2}\\
    \{x_j\}_{j\neq i}&\mapsto \{x_j\}_{j\neq i}\cup\{x_i:=\partial d_i(\{x_j\}_{j\neq i})\}
  \end{align*}
  Because $\lambda^{n+1}_i(\tn(f))$ is an isomorphism for all $i$, this map is an isomorphism, with inverse given by $\lambda^{n+2}_i(\tn(f))$.

    For $k>n+2$, the map $\lambda^k_i(\tn(f))$ is an isomorphism because the map $\tn(f)$ is $(n+1)$-coskeletal, and the inclusion $\Lambda^k_i\hookrightarrow\Delta^k$ is the identity on $(n+1)$-skeleta.  By inductively applying Lemma \ref{lemma:psrep}, we conclude that $\tn(X,f)_k$ is an object of $\C$ for all $k$ and that $\tn(f)$ is an $n$-stack.

    We have shown that if $f$ is isomorphic to $\tn(f)$, then $f$ is an $n$-stack. Conversely, suppose that $f$ is an $n$-stack.  For any $k$ and any map, the $(k-1)$-skeleton of the map determines its $\Lambda^k_i$-horns.  The horn-filling maps for $\tn(f)$ are isomorphisms above dimension $n$.  If $f$ is an $n$-stack, then the horn-filling maps for $f$ are also isomorphisms above dimension $n$.  We conclude that the canonical map from $f$ to $\tn(f)$ is an isomorphism if $f$ is an $n$-stack and if the map from $f$ to $\tn(f)$ induces an isomorphism on $n$-skeleta.

    When $f$ is an $n$-stack, the map from $f$ to $\tn(f)$ is automatically an isomorphism on $n$-skeleta.  Indeed, if $f$ is an $n$-stack, then $P^{\geq n}(f)_\bullet$ is a Lie 0-groupoid (Theorem \ref{thm:kmor}).  As a result, the map from $X_n$ to $\pi_0(P^{\geq n}(f))$ is an isomorphism.
\end{proof}

If $f\colon X_\bullet\to Y_\bullet$ is a hypercover, then Proposition \ref{prop:hyppi0} and Theorem \ref{thm:kmorhyp} imply that
\begin{equation*}
    \pi_0(P^{\ge n}(f))\cong M_n(f)
\end{equation*}
In particular, the $n$-strictification $\tn(f)$ is an $n$-stack.

\begin{proposition}\label{prop:tnhyp}
  If $f:X_{\bullet}\rightarrow Y_{\bullet}$ is a hypercover, then
  \begin{enumerate}
  \item the map $\tn(f)$ is an $n$-hypercover, and
  \item the map
    \begin{xy}
      \morphism[X_{\bullet}`\tn(X,f)_{\bullet};q]
    \end{xy}
    is a hypercover.
  \end{enumerate}
\end{proposition}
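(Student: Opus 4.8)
The plan is to derive both parts from Proposition~\ref{prop:tn}, Theorem~\ref{thm:hk1}(2), and the matching-object calculus used in the proofs of Theorems~\ref{prop:hk2} and~\ref{thm:hk1}, exploiting the fact that the matching maps of $\tn(f)$ are not merely covers but \emph{isomorphisms} in every degree $\ge n$.

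Since $f$ is a hypercover, Theorem~\ref{thm:kmorhyp} (in the case $n=\infty$) shows that the augmentation $\pi\colon P^{\ge n}(f)_\bullet\to M_n(f)$ is a hypercover, so Proposition~\ref{prop:hyppi0} gives an isomorphism $\pi_0(P^{\ge n}(f))\cong M_n(f)$; in particular $\tn(X,f)_\bullet$ is defined and, by Proposition~\ref{prop:tn}, $\tn(f)$ is an $n$-stack. The first task is to promote this to the statement that $\tn(f)$ is a hypercover, i.e.\ that $\mu_k(\tn(f))$ is a cover for every $k$. For $k<n$ the $(n-1)$-truncations of $f$ and $\tn(f)$ agree, so $M_k(\tn(f))=M_k(f)$ and $\mu_k(\tn(f))=\mu_k(f)$ is a cover. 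For $k=n$ the construction of $\tn$ identifies $\mu_n(\tn(f))$ with the map $\pi_0(P^{\ge n}(f))\to M_n(f)$ induced by $\pi$ on orbit spaces, which is the isomorphism just produced. For $k>n$ I would induct on $k$ using the pullback square~\eqref{mulambda} for $\tn(f)$: since $\tn(f)$ is an $n$-stack and $k>n$, the horn map $\lambda^k_i(\tn(f))$ is an isomorphism; the square~\eqref{mulambda} exhibits $M_k(\tn(f))\to\Lambda^k_i(\tn(f))$ as a pullback of $\mu_{k-1}(\tn(f))$, which is an isomorphism (for $k-1=n$ by the previous paragraph, and inductively for $k-1>n$), so $M_k(\tn(f))\to\Lambda^k_i(\tn(f))$ is an isomorphism and hence so is $\mu_k(\tn(f))$, being the composite of $\lambda^k_i(\tn(f))$ with its inverse. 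Thus $\tn(f)$ is a hypercover which is an $n$-stack, so Theorem~\ref{thm:hk1}(2) shows that $\tn(f)$ is an $n$-hypercover, proving~(1); note that along the way we have shown $\mu_k(\tn(f))$ is an isomorphism for every $k\ge n$.

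For~(2), I would run the inductive argument from the proof of Theorem~\ref{prop:hk2}(1) on the composable pair $q\colon X_\bullet\to\tn(X,f)_\bullet$ and $\tn(f)\colon\tn(X,f)_\bullet\to Y_\bullet$, whose composite $\tn(f)\circ q=f$ is a hypercover. One obtains, for each $k$, that $M_k(q)$ exists and sits in a pullback square
\begin{equation*}
  \begin{xy}
    \Square[M_k(q)`\tn(X,f)_k`M_k(f)`M_k(\tn(f));`\alpha_k`\mu_k(\tn(f))`]
  \end{xy}
\end{equation*}
whose bottom map is induced by $q$ and for which $\mu_k(f)=\alpha_k\circ\mu_k(q)$. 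For $k<n$ the component $q_k$ is the identity of $X_k$, so $\alpha_k$ identifies $M_k(q)$ with $X_k$ and $\mu_k(q)$ with the identity, a cover. For $k\ge n$, part~(1) shows $\mu_k(\tn(f))$ is an isomorphism, hence so is its pullback $\alpha_k$; as $\mu_k(f)$ is a cover, $\mu_k(q)=\alpha_k^{-1}\circ\mu_k(f)$ is a cover. Therefore $\mu_k(q)$ is a cover for every $k$, i.e.\ $q$ is a hypercover, proving~(2).

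The one point requiring care is the inductive step for $k>n$ in part~(1)---passing from ``$\lambda^k_i(\tn(f))$ is an isomorphism'' to ``$\mu_k(\tn(f))$ is an isomorphism''. Besides promoting $\tn(f)$ from an $n$-stack to a hypercover, this is precisely what licenses the cancellation $\mu_k(f)=\alpha_k\circ\mu_k(q)$ in part~(2): the inference that $\mu_k(q)$ is a cover would be invalid if $\alpha_k$ were merely a cover rather than an isomorphism. Carrying out this step only reuses the analysis of the squares~\eqref{mulambda} already performed in the proof of Proposition~\ref{prop:tn}, so it introduces no genuinely new difficulty.
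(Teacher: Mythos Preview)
Your proof is correct, but it takes a different route from the paper's. For part~(1), the paper argues directly at the level of the explicit description of $\tn(X,f)_\bullet$: it shows by hand that the maps $(1,d_1)$ and $d_{\hat 1}$ between $\Lambda^{n+1}_1(\tn(f))$ and $M_{n+1}(\tn(f))$ are mutually inverse (using that $X_{n+1}$ surjects onto both and Axioms~\ref{axiom:fg+g} and~\ref{axiom:subcan}), and from this deduces the closed-form identification $\tn(X,f)_\bullet\cong\Csk_n(X)_\bullet\times_{\Csk_n(Y)_\bullet}Y_\bullet$, which makes the $n$-hypercover property immediate. You instead bootstrap from Proposition~\ref{prop:tn}: knowing $\tn(f)$ is an $n$-stack, you feed the isomorphism $\mu_n(\tn(f))\cong 1$ into the square~\eqref{mulambda} and climb inductively to get $\mu_k(\tn(f))$ an isomorphism for all $k\ge n$. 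This is slicker and reuses machinery already in place, though it does not yield the explicit coskeletal description. For part~(2), the paper again argues by inspection in each range of $k$ (leaving $k>n$ as ``an exercise in combinatorics''), whereas your use of the pullback square from the proof of Theorem~\ref{prop:hk2}(1)---combined with the fact that $\mu_k(\tn(f))$ is an \emph{isomorphism} rather than merely a cover for $k\ge n$---gives a uniform and transparent argument. The one point you leave implicit is the existence of $M_k(\tn(f))$ and $M_k(q)$ at each stage; this follows from Lemma~\ref{lemma:phrep} once the lower $\mu_j$ are known to be covers, exactly as in the inductive proofs of Theorems~\ref{prop:hk2} and~\ref{thm:hk1}, so there is no real gap.
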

\begin{proof}
  We show that $\tn(X,f)_{n+1}=\Lambda^{n+1}_1(\tn(f))\cong M_{n+1}(\tn(f))$.  Consider the commuting triangle
  \begin{equation*}
    \begin{xy}
        \Atriangle|lra|/>`>`{@<3pt>}/[X_{n+1}`\Lambda^{n+1}_1(\tn(f))`M_{n+1}(\tn(f));``(1,d_1)]
        \morphism(1000,0)|b|/{@<3pt>}/<-1000,0>[M_{n+1}(\tn(f))`\Lambda^{n+1}_1(\tn(f));d_{\hat{1}}]
    \end{xy}
  \end{equation*}
  We showed in Lemma \ref{lemma:di1} that the map $X_{n+1}\to\Lambda^{n+1}_1(\tn(f))$ is a cover.  A similar argument shows that, because $f$ is a hypercover, the map $X_{n+1}\to M_{n+1}(\tn(f))$ is a cover.  Axiom \ref{axiom:fg+g} implies that both $d_{\hat{1}}$ and $(1,d_1)$ are covers.  Axiom \ref{axiom:subcan} implies that they are both epimorphisms.  By construction
  \begin{equation*}
    d_{\hat{1}}(1,d_1)=1_{\Lambda^{n+1}_1(\tn(f))}
  \end{equation*}
  As a result,
  \begin{align*}
    (1,d_1)d_{\hat{1}}(1,d_1)&=(1,d_1)\\
    &=1_{M_{n+1}(\tn(f))}(1,d_1)
  \end{align*}
  Because $(1,d_1)$ is an epimorphism, we conclude that $(1,d_1)d_{\hat{1}}=1_{M_{n+1}(\tn(f))}$.

  This isomorphism combines with the isomorphism
  \begin{align*}
    \tn(X,f)_n&\cong M_n(f)\\
        &=M_n(\tn(f))
  \end{align*}
  to show that
  \begin{equation*}
    \tn(X,f)_\bullet\cong\Csk_{n-1}(X)_\bullet\times_{\Csk_{n-1}(Y)_\bullet}Y_\bullet
  \end{equation*}
  We have shown that $\tn(f)$ is an $n$-hypercover.

  For $k<n$, $M_k(q)\cong X_k$ by inspection. A similar check shows that the map from $M_n(q)$ to $\tn(X,f)_n$ is an isomorphism.  We observed above that $\tn(X,f)_n\cong M_n(f)$ when $f$ is a hypercover.  This implies that the map $X_n\to M_n(q)$ is isomorphic to the cover $X_n\to M_n(f)$.  For $k>n$, an exercise in combinatorics shows that the map $X_k\to M_k(q)$ is isomorphic to the cover $X_k\to M_k(f)$.  We conclude the proof.
\end{proof}

\begin{proposition}\label{prop:coc}
  Let $X_\bullet$ be a Lie $\infty$-groupoid, $U_\bullet\to^f X_\bullet$ a hypercover, and $Y_\bullet$ a Lie $n$-groupoid. Then any span
  \begin{equation*}
    \begin{xy}
      \Atriangle/>`>`{}/<600,0>[U_{\bullet}`X_\bullet`Y_{\bullet};f``]
    \end{xy}
  \end{equation*}
  factors uniquely through $\tn(f)$ as in the diagram
  \begin{equation*}
    \begin{xy}
      \Vtrianglepair/<-`>`<-`>`<-/<600,300>[X_\bullet`U_{\bullet}`Y_{\bullet}`\tn(U,f)_\bullet;f``\tn(f)``]
    \end{xy}
  \end{equation*}
  Further, the vertical map in this diagram is a hypercover.
\end{proposition}
\begin{proof}
  The data of a span is equivalent to a commuting triangle
  \begin{equation*}
    \begin{xy}
      \Vtriangle<600,300>[U_{\bullet}`X_\bullet\times Y_{\bullet}`X_{\bullet};`f`\pi_{X}]
    \end{xy}
  \end{equation*}
  Theorem \ref{thm:hk1} shows that both diagonal maps are
  $\infty$-stacks over $X_\bullet$.

  We apply $\tn$ to obtain the commuting diagram
  \begin{equation*}
    \begin{xy}
      \morphism(0,0)<1200,0>[\tn(U,f)_{\bullet}`\tn(X\times
      Y,\pi_{X})_{\bullet};]
      \morphism(0,0)/@{>}|!{(0,-300);(1200,-300)}\hole/<600,-600>[\tn(U,f)_{\bullet}`X_{\bullet};]
      \morphism(1200,0)/@{>}|!{(0,-300);(1200,-300)}\hole/<-600,-600>[\tn(X\times Y,\pi_{X})_{\bullet}`X_{\bullet};]
      \morphism(0,-300)<0,300>[U_{\bullet}`\tn(U,f)_{\bullet};]
      \morphism(1200,-300)<0,300>[Y_{\bullet}`\tn(X\times
      Y,\pi_{X})_{\bullet};]
      \morphism(0,-300)<1200,0>[U_{\bullet}`Y_{\bullet};]
      \Vtriangle(0,-600)<600,300>[U_{\bullet}`Y_{\bullet}`X_{\bullet};``]
    \end{xy}
  \end{equation*}
  Proposition \ref{prop:tn} shows that the map
  \begin{equation*}
    \begin{xy}
      \morphism<1250,0>[X_{\bullet}\times Y_{\bullet}`\tn(X\times Y,\pi_{X})_{\bullet};]
    \end{xy}
  \end{equation*}
  is an isomorphism.  Proposition \ref{prop:tnhyp} shows that $\tn(f)$
  is an $n$-hypercover and that the map
  \begin{equation*}
    \begin{xy}
      \morphism[U_{\bullet}`\tn(U,f)_{\bullet};]
    \end{xy}
  \end{equation*}
  is a hypercover.
\end{proof}

\section{n-Bundles and Descent}\label{sec:descent}
A classical construction produces a principal bundle for a Lie group
from local data on the base.  This local data is frequently presented
in the form of a cocycle $\varphi$ on a cover $U\rightarrow X$. If we
pass to the nerve of the cover $f:U_{\bullet}\rightarrow X$, we can
encode the cocycle as a 0-stack
\begin{equation}\label{eq:0bun}
  \begin{xy}
    \morphism[E^{\varphi}_{\bullet}`U_{\bullet};p]
  \end{xy}
\end{equation}
The principal bundle corresponding to the cocycle is the 0-strictification
\begin{equation*}
  \begin{xy}
    \morphism<750,0>[\tau_0(E^{\varphi},fp)`X;\tau_0(fp)]
  \end{xy}
\end{equation*}
From this perspective, the principal bundle is representable because
the 0-stack $p$ has the structure of a \emph{twisted Cartesian
  product}.  Twisted Cartesian products were studied by Barratt,
Guggenheim and Moore in their work on principal and associated bundles
for simplicial groups \cite{BGM:59}.
\begin{definition}\label{def:locn}
  A map $p:E_{\bullet}\rightarrow X_{\bullet}$ is a \emph{twisted
    Cartesian product}, if there exists $Y_\bullet\in\sC$, and there exist isomorphisms
    \begin{equation*}
      \begin{xy}
        \Vtriangle<600,300>[E_k`X_k\times Y_k`X_k;\varphi_k`p`\pi_{X_k}]
        \morphism(0,300)|b|<1200,0>[E_k`X_k\times Y_k;\cong]
      \end{xy}
    \end{equation*}
    for each $k\in\mathbb{N}$, such that, for $i<k$,
    \begin{equation*}
      \varphi_{k-1} d_i^E= (d_i^X\times d_i^Y)\varphi_k,
    \end{equation*}
    and, for all $i$,
    \begin{equation*}
      \varphi_k s_i^E=(s_i^X\times s_i^Y)\varphi_k.
    \end{equation*}
\end{definition}

\begin{definition}
  A \emph{local $n$-bundle} is a twisted Cartesian product which is also an $n$-stack.
\end{definition}

In analogy with \ref{eq:0bun}, we consider local $n$-bundles on the total spaces of
$(n+1)$-hypercovers
\begin{equation*}
  \begin{xy}
    \morphism[E_{\bullet}`U_{\bullet};p]
    \morphism(500,0)[U_{\bullet}`X_{\bullet};f]
  \end{xy}
\end{equation*}
Our goal in this section is to show that the $n$-strictification
\begin{equation*}
  \begin{xy}
    \morphism<750,0>[\tn(E,fp)_{\bullet}`X_{\bullet};\tn(fp)]
  \end{xy}
\end{equation*}
exists.

From the definition, it suffices to show that the orbit space $\pi_0 P^{\geq n}(fp)$ exists. Because the map $fp$ is an $(n+1)$-stack, the relative
higher morphism space $P^{\geq n}(fp)$ is a Lie 1-groupoid (Theorem \ref{thm:kmor}).  We see that the existence of the $n$-strictification is equivalent to the existence of the orbit space of a Lie groupoid.

\begin{theorem}[Godemont]
  Let $\G$ be a Lie groupoid in the category of analytic
  manifolds over a complete normed field.  If the map
  $(s,t):\G_1\rightarrow \G_0\times\G_0$ is a closed embedding, then
  $\pi_0(\G)$ is an analytic manifold and the map
  $\G_0\rightarrow\pi_0(\G)$ is a surjective submersion.
\end{theorem}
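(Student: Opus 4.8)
The plan is to recognise this as Godement's theorem on quotients by regular equivalence relations, and to run the slice construction in the analytic setting. First I would transport the problem to $\G_0\times\G_0$: let $R\subseteq\G_0\times\G_0$ be the image of $(s,t)$, which by hypothesis is a closed submanifold, and let $p_1,p_2\colon R\to\G_0$ be the two projections. The unit $e$, the inverse $i$, and the composition $m$ of $\G$ show that $R$ is the graph of an equivalence relation on $\G_0$, and the analytic isomorphism $(s,t)\colon\G_1\xrightarrow{\cong}R$ identifies $s$ and $t$ with $p_1$ and $p_2$; hence $p_1$ and $p_2$ are surjective submersions. For $x_0\in\G_0$, regularity of $p_1$ makes $p_1^{-1}(x_0)$ a submanifold of $R$, and since $(s,t)$ is an embedding its image $O_{x_0}:=p_2(p_1^{-1}(x_0))=t(s^{-1}(x_0))$ is a closed submanifold of $\G_0$ of dimension $r:=\dim\G_1-\dim\G_0$ (locally constant, and $\le\dim\G_0$ because $(s,t)$ is an immersion into $\G_0\times\G_0$). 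These are the ``orbits'', and the underlying set of the quotient is $\G_0/R$.

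Next I would build charts. Fix $x_0$ and choose a submanifold $S\ni x_0$ with $T_{x_0}\G_0=T_{x_0}O_{x_0}\oplus T_{x_0}S$. Using that $p_1$ is a submersion together with this transversality, the constant-rank and implicit function theorems for analytic manifolds over a complete normed field (see \cite[Chapter III]{Ser:64}) let one shrink $S$ so that the saturation $R\cdot S:=p_2(p_1^{-1}(S))$ is an open neighbourhood of $x_0$ and so that there is an analytic isomorphism $R\cdot S\cong S\times O$, with $O$ an $r$-dimensional ball, under which the map ``send a point to the unique point of $S$ lying in its orbit'' becomes the first projection; in particular $S$ meets each orbit in $R\cdot S$ exactly once. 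Two such slices $S_\alpha,S_\beta$ have transition map obtained by composing the relevant projections with the analytic local sections just produced, hence it is again analytic, and the slices form an atlas on $\G_0/R$.

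It remains to assemble the quotient and identify it with $\pi_0(\G)$. The quotient map $q\colon\G_0\to\G_0/R$ is open, because $p_2\cong t$ is a submersion, so $q^{-1}(q(U))=R\cdot U$ is open for every open $U\subseteq\G_0$; together with the closedness of $R$ in $\G_0\times\G_0$ this forces $\G_0/R$ to be Hausdorff, hence --- with the atlas above --- an analytic manifold. In these charts $q$ is a first projection, so $q$ is a surjective submersion, i.e.\ a cover. Finally, the kernel pair of $q$ is exactly $R\rightrightarrows\G_0$, which under $(s,t)\colon\G_1\cong R$ is the pair $\G_1\rightrightarrows\G_0$ given by $s$ and $t$; since $(s,t)$ is onto $R$, a map out of $\G_0$ equalising $s$ and $t$ equalises the kernel pair of $q$, and, as covers are effective epimorphisms (Axiom~\ref{axiom:subcan}), it factors uniquely through $q$. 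Thus $q$ is the coequaliser of $s$ and $t$, so $\G_0\to\G_0/R$ is a model for the orbit space $\G_0\to\pi_0(\G)$.

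The hard part will be the slice step --- producing the analytic normal form $R\cdot S\cong S\times O$ for the ``retract each orbit to its point of $S$'' map --- which is precisely where both the submersion hypothesis on $s$ and $t$ and the availability of the analytic implicit function theorem over a complete normed field are used. The closed-embedding hypothesis is otherwise needed only to ensure that $R$ is a closed submanifold of $\G_0\times\G_0$, which is what makes the quotient Hausdorff and hence a genuine manifold.
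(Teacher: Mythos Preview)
The paper does not actually prove this theorem: immediately after stating it, the author writes ``Serre gives a proof in \cite[Theorem III.12.2]{Ser:64} which applies mutatis mutandi to the category of smooth manifolds'' and then moves on to axiomatise the conclusion as Axiom~\ref{axiom:gode}. So there is no in-paper argument to compare against; the paper treats the result as classical input.

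Your sketch is essentially the classical proof that the citation points to: pass from the groupoid to the closed equivalence relation $R=(s,t)(\G_1)\subset\G_0\times\G_0$, use that the projections $p_i$ are submersions to build slices via the analytic implicit function theorem, check Hausdorffness from closedness of $R$ and openness of the quotient map, and then identify the resulting quotient with the coequaliser defining $\pi_0(\G)$ using Axiom~\ref{axiom:subcan}. This is correct in outline and is precisely the route Serre takes. One small point worth tightening: your claim that $O_{x_0}=p_2(p_1^{-1}(x_0))$ is a closed submanifold is right, but the justification is not that $(s,t)$ is an embedding per se; rather, $p_1^{-1}(x_0)$ is a closed submanifold of $R$ (fibre of a submersion), hence closed in $\G_0\times\G_0$, and it sits inside $\{x_0\}\times\G_0$, so projecting by $p_2$ is an isomorphism onto its image. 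Otherwise the sketch is sound and matches the cited source.
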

Serre gives a proof in \cite[Theorem III.12.2]{Ser:64} which applies mutatis mutandi to the category of smooth manifolds.  Inspired by this, we formulate an analogue of Godemont's Theorem for categories with covers.  We begin by defining an analogue of closed embeddings.
\begin{definition}
    Let $f\colon X\to Y$ be a morphism in $\C$. The \emph{graph} $\Gamma_f$ of $f$ is the inclusion
    \begin{equation*}
        \begin{xy}
            \morphism/^{ (}->/<750,0>[X\times_Y Y`X\times Y;\Gamma_f]
        \end{xy}
    \end{equation*}
    The subcategory of \emph{regular embeddings} is the smallest sub-category of $\C$ which is closed under pullback along covers and which contains all graphs.
\end{definition}

An isomorphism is a regular embedding, because it is a pullback of the graph
\begin{equation*}
    \begin{xy}
        \morphism[*`*\times *;\Delta]
    \end{xy}
\end{equation*}
along a cover $X\rightarrow *$.  A regular embedding in the category of smooth manifolds is a certain type of closed embedding.

\begin{definition}
  A \emph{regular} Lie $n$-groupoid is a Lie $n$-groupoid $X_\bullet$
  such that the map $\mu_1(X):X_1\to M_1(X)\cong X_0\times X_0$ is a
  regular embedding.
\end{definition}

\begin{axiom}\label{axiom:gode}(Godemont's Theorem)
  Let $X_\bullet$ be a regular Lie $n$-groupoid.  The orbit space $\pi_0(X_\bullet)$ exists.
\end{axiom}

\begin{remark}
    In the setting of smooth or analytic manifolds, this axiom is just a restatement of Godemont's theorem: the definition of a Lie $n$-groupoid ensures that the map $(d_0,d_1)\colon X_1\to X_0\times X_0$ defines an equivalence relation on $X_0$, and that $\pi_0(X_\bullet)$ is the quotient of $X_0$ by this relation. Serre's treatment in \cite{Ser:64} shows that $\pi_0(X_\bullet)$ is a manifold if $X_1\to X_0\times X_0$ is a closed embedding.
\end{remark}

\begin{theorem}[Descent for $n$-bundles]
  \label{thm:desc}
  Suppose that Godemont's Theorem holds in $\C$.  If
  $p:E_{\bullet}\rightarrow U_{\bullet}$ is a local $n$-bundle and
  $f:U_{\bullet}\rightarrow X_{\bullet}$ is an $(n+1)$-hypercover,
  then the $n$-strictification
  \begin{equation*}
    \begin{xy}
      \morphism<750,0>[\tn(E,fp)_{\bullet}`X_{\bullet};\tn(fp)]
    \end{xy}
  \end{equation*}
  exists.
\end{theorem}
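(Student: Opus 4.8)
The plan is to follow the reduction sketched above: the map $\tn(fp)$ exists as soon as the orbit space $\pi_0(P^{\geq n}(fp))$ exists, and $P^{\geq n}(fp)_\bullet$ is a Lie groupoid, so one is reduced to producing the orbit space of a Lie groupoid. First I would nail down the preliminaries. Since $f$ is an $(n+1)$-hypercover it is an $(n+1)$-stack by Theorem~\ref{thm:hk1}, and $p$, being an $n$-stack, is \emph{a fortiori} an $(n+1)$-stack; so by Theorem~\ref{thm:hk1} the composite $fp$ is an $(n+1)$-stack, and by Theorem~\ref{thm:kmor} the relative higher morphism space $P^{\geq n}(fp)_\bullet$ is a Lie $1$-groupoid with $P^{\geq n}(fp)_0\cong E_n$. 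By the construction of $\tn$ and Proposition~\ref{prop:tn}, $\tn(E,fp)$ exists once $\pi_0(P^{\geq n}(fp))$ does, and by Axiom~\ref{axiom:gode} this holds as soon as
\[
  \mu_1(P^{\geq n}(fp))\colon P^{\geq n}(fp)_1 \too P^{\geq n}(fp)_0\times P^{\geq n}(fp)_0\cong E_n\times E_n
\]
is a regular embedding. Everything comes down to this.

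To establish regularity I would play the two hypotheses against each other. From the side of $f$: because $f$ is an $(n+1)$-hypercover, Theorem~\ref{thm:kmorhyp} shows the augmentation $P^{\geq n}(f)_\bullet\to M_n(f)$ is a $1$-hypercover, so $P^{\geq n}(f)_\bullet$ is the nerve of the cover $\mu_n(f)\colon U_n\to M_n(f)$; hence $\mu_1(P^{\geq n}(f))\colon U_n\times_{M_n(f)}U_n\to U_n\times U_n$ is the base change of the graph (diagonal) of $\mathrm{id}_{M_n(f)}$ along the cover $\mu_n(f)\times\mu_n(f)$, so $P^{\geq n}(f)_\bullet$ is a regular Lie groupoid. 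From the side of $p$: because $p$ is an $n$-stack, Theorem~\ref{thm:kmor} shows $P^{\geq n}(p)_\bullet$ is a Lie $0$-groupoid, i.e.\ a constant simplicial object on $E_n$, and I would transport this rigidity through the twisted Cartesian product isomorphisms $E_k\cong U_k\times Y_k$ of Definition~\ref{def:locn}. Concretely, the conditions defining a point of $P^{\geq n}(fp)_1\subseteq E_{n+1}$ separate, under $E_{n+1}\cong U_{n+1}\times Y_{n+1}$, into a condition on the $U$-coordinate (placing it in $P^{\geq n}(f)_1$) and a condition on the $Y$-coordinate (placing it in $F:=\{\,y\in Y_{n+1}\mid d_iy\text{ is }s_{n-1}\text{-degenerate for }i<n\,\}$), giving a natural isomorphism $P^{\geq n}(fp)_1\cong P^{\geq n}(f)_1\times F$; and rigidity of $P^{\geq n}(p)_\bullet$ forces the face map $d_n\colon F\to Y_n$ to be an isomorphism, with inverse $s_n$, and the twisted last face of the product to agree with $d_n$ over degenerate base simplices. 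Under these identifications $\mu_1(P^{\geq n}(fp))$ becomes the map $(\xi,\alpha)\mapsto\bigl((d_n\xi,\alpha),(d_{n+1}\xi,\tau(\xi,s_n\alpha))\bigr)$ from $P^{\geq n}(f)_1\times Y_n$ to $(U_n\times Y_n)\times(U_n\times Y_n)$, where $\tau$ denotes the twisting function. This factors as the graph of $(\xi,\alpha)\mapsto\tau(\xi,s_n\alpha)$ followed by the base change of $\mu_1(P^{\geq n}(f))$ along the cover $(U_n\times Y_n)\times(U_n\times Y_n)\to U_n\times U_n$; since graphs are regular embeddings and regular embeddings are closed under composition and stable under base change along covers, $\mu_1(P^{\geq n}(fp))$ is a regular embedding, and Axiom~\ref{axiom:gode} then supplies $\pi_0(P^{\geq n}(fp))$ and hence $\tn(E,fp)$.

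The step I expect to be the main obstacle is the middle one: keeping the twisted Cartesian product structure on $p$ under control as it is pushed through the higher morphism space construction — precisely, identifying $P^{\geq n}(fp)_\bullet\to P^{\geq n}(f)_\bullet$ as again a twisted Cartesian product with $0$-truncated fibre $P^{\geq n}(Y)_\bullet$, which requires tracking, in the join combinatorics underlying $P^{\geq n}(-)$, exactly which structure maps are the ``product'' faces and which is the twisted face. This is the category-with-covers analogue of the classical fact that the fibre of a twisted Cartesian product is a Kan complex (an $n$-groupoid when the projection is an $n$-stack), and it is the combinatorial heart of the argument; once it is in place, assembling $\mu_1(P^{\geq n}(fp))$ out of a graph and a base change of $\mu_1(P^{\geq n}(f))$, and invoking Godemont's Theorem, is routine.
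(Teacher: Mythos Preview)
Your proposal is correct and follows essentially the same route as the paper: reduce to regularity of $\mu_1(P^{\geq n}(fp))$, use Theorem~\ref{thm:kmorhyp} on the $(n+1)$-hypercover $f$ to see that $P^{\geq n}(f)_1\hookrightarrow U_n\times U_n$ is a regular embedding, use the twisted Cartesian product structure together with the $0$-truncatedness of $P^{\geq n}(p)_\bullet$ to obtain $P^{\geq n}(fp)_1\cong P^{\geq n}(f)_1\times Y_n$, and then factor $\mu_1(P^{\geq n}(fp))$ as a graph followed by a pullback of that embedding along a cover. The paper isolates your ``middle step'' as a separate lemma (indeed calling it ``the crux of the proof''), proving it via the intermediate identification $P^{\geq n}(fp)_1\cong P^{\geq n}(f)_1\times_{U_n}P^{\geq n}(p)_1$, which is exactly the decomposition you are aiming for.
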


\begin{remark}
    To study bundles for simplicial Lie groupoids, one should use twisted \emph{fiber} products rather than twisted Cartesian products in the definition of local $n$-bundles.  The theorem also holds for this more general notion.
\end{remark}

The following lemma is the crux of the proof.
\begin{lemma}
    There exists an isomorphism
    \begin{equation*}
        \begin{xy}
            \morphism(0,0)|a|<1000,0>[P^{\geq n}(fp)_1`P^{\geq n}(f)_1\times Y_n;\psi]
            \morphism(0,0)|b|<1000,0>[P^{\geq n}(fp)_1`P^{\geq n}(f)_1\times Y_n;\cong]
        \end{xy}
    \end{equation*}
    such that the maps $\phi_n d_n$ and $(d_n\times 1_{Y_n})\psi$ are equal.
\end{lemma}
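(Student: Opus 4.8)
The plan is to \emph{separate variables} using the twisted Cartesian product structure of $p$. The key structural observation is that in a twisted Cartesian product (Definition~\ref{def:locn}) the isomorphisms $\varphi_k\colon E_k\cong U_k\times Y_k$ are compatible with \emph{all} degeneracies and with \emph{all but the top} face map $d_k^E$; the twist lives only in $d_{k}^E$. Since the conditions defining $P^{\geq n}(fp)_1$ involve only $d_0^E,\dots,d_n^E$ (and degeneracies), and since $p$ is a projection, these conditions will split cleanly into a $U$-part and a $Y$-part. I would compare $P^{\geq n}(fp)$ with $P^{\geq n}(f)$ this way, and deal with the leftover $Y$-part by comparing with $P^{\geq n}(p)$, whose $n$-stack hypothesis is the only nontrivial ingredient.

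First I would write out the $1$-simplices explicitly, using the description recorded just after Definition~\ref{defi:relkmor}: a $1$-simplex of $P^{\geq n}(fp)$ is an $(n+1)$-simplex $e\in E_{n+1}$ with $(fp)(e)=s_nd_n(fp)(e)$ and $d_ie=s_{n-1}d_{n-1}d_ie$ for $i<n$; a $1$-simplex of $P^{\geq n}(f)$ is a $u\in U_{n+1}$ with $f(u)=s_nd_nf(u)$ and $d_iu=s_{n-1}d_{n-1}d_iu$ for $i<n$. Writing $\varphi_{n+1}(e)=(u,y)\in U_{n+1}\times Y_{n+1}$ and using that $(fp)(e)=f(u)$ (as $p$ is the projection) and that $d_0^E,\dots,d_n^E$, $d_{n-1}^E$ and all degeneracies are $\varphi$-compatible, the conditions on $e$ become: $f(u)=s_nd_nf(u)$; $d_iu=s_{n-1}d_{n-1}d_iu$ for $i<n$; and $d_iy=s_{n-1}d_{n-1}d_iy$ for $i<n$. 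The first two say exactly $u\in P^{\geq n}(f)_1$, and the third cuts out a subobject $S\subseteq Y_{n+1}$. Hence $\varphi_{n+1}$ restricts to an isomorphism $P^{\geq n}(fp)_1\cong P^{\geq n}(f)_1\times S$, while $\varphi_n$ gives $P^{\geq n}(fp)_0=E_n\cong U_n\times Y_n=P^{\geq n}(f)_0\times Y_n$ on vertices.

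It then remains to identify $S$ with $Y_n$. The degeneracy $s_n^Y$ carries $Y_n$ into $S$ (from $d_is_n=s_{n-1}d_i$ for $i<n$ and $d_{n-1}s_{n-1}=1$ one computes $d_is_ny'=s_{n-1}d_{n-1}d_is_ny'$), and $d_n^Ys_n^Y=1_{Y_n}$; the real content is that $s_n^Yd_n^Y$ restricts to the identity of $S$ — i.e. that a simplex meeting the above conditions equals $s_n^Y$ of its $n$-th face. This is \emph{false} for a general simplicial object, and is exactly where the hypothesis enters. Since $p$ is an $n$-stack, Theorem~\ref{thm:kmor} gives that $P^{\geq n}(p)_\bullet$ is a Lie $0$-groupoid, hence isomorphic to a constant simplicial object; in particular its face map $P^{\geq n}(p)_1\to P^{\geq n}(p)_0=E_n$, which is induced by $d_n^E$, is an isomorphism with inverse the degeneracy induced by $s_n^E$. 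The same variable separation identifies $P^{\geq n}(p)_1$ with $T\times S$ for a subobject $T\subseteq U_{n+1}$ and transports the isomorphism to $d_n^U\times d_n^Y\colon T\times S\to U_n\times Y_n$, with inverse $s_n^U\times s_n^Y$. Comparing $Y$-components of $(s_n^U\times s_n^Y)(d_n^U\times d_n^Y)=1_{T\times S}$, and using that the projection $T\times S\to S$ is a cover, hence an epimorphism (Axioms~\ref{axiom:fib},~\ref{axiom:top},~\ref{axiom:subcan}), yields $s_n^Yd_n^Y|_S=1_S$. Therefore $d_n^Y\colon S\to Y_n$ is an isomorphism, with inverse $s_n^Y$.

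Finally I would set $\psi$ to be the composite $P^{\geq n}(fp)_1\cong P^{\geq n}(f)_1\times S\xrightarrow{1\times d_n^Y}P^{\geq n}(f)_1\times Y_n$ on $1$-simplices, and $\psi=\varphi_n\colon E_n\cong U_n\times Y_n$ on vertices; both are isomorphisms. The maps called $d_n$ on $P^{\geq n}(fp)$ and on $P^{\geq n}(f)$ are induced by $d_n^E$ and $d_n^U$, which are $\varphi$-compatible, so for $e$ with $\varphi_{n+1}(e)=(u,y)$ both $\psi d_n$ and $(d_n\times 1_{Y_n})\psi$ send $e$ to $(d_n^Uu,d_n^Yy)$, which is the asserted identity. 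I expect the main obstacle to be the identification of $S$ with $Y_n$: the rest is bookkeeping, whose only delicate point is to verify that no condition in sight ever involves the twisted face map $d_{n+1}^E$. For the existence of auxiliary limits such as $S$, and to globalize the element-level reasoning, one can run the argument on points exactly as in the proof of Lemma~\ref{lemma:di2}.
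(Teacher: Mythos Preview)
Your proof is correct and follows essentially the same route as the paper: separate variables via the twisted Cartesian product structure, identify the $U$-component with $P^{\geq n}(f)_1$, and handle the $Y$-component by invoking Theorem~\ref{thm:kmor} to see that $P^{\geq n}(p)_\bullet$ is a Lie $0$-groupoid. The only cosmetic difference is that where you write $P^{\geq n}(fp)_1\cong P^{\geq n}(f)_1\times S$ and then argue $S\cong Y_n$, the paper packages the same computation as an isomorphism $P^{\geq n}(fp)_1\cong P^{\geq n}(f)_1\times_{U_n}P^{\geq n}(p)_1$ (via $(u,y)\mapsto(u,(s_nd_nu,y))$) followed by the target isomorphism $P^{\geq n}(p)_1\cong P^{\geq n}(p)_0\cong U_n\times Y_n$; this has the minor advantage of never needing the auxiliary object $S$ to exist on its own in $\C$.
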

\begin{proof}
  The definition of $P^{\geq n}(fp)_1$ allows us to view it as a
  sub-object of $E_{n+1}$.  Since $p$ is a twisted Cartesian product,
  there exists $Y_\bullet\in\sC$ and isomorphisms
    \begin{equation*}
      \begin{xy}
        \morphism(0,0)|a|<750,0>[E_k`U_k\times Y_k;\varphi_k]
        \morphism(0,0)|b|<750,0>[E_k`U_k\times Y_k;\cong]
      \end{xy}
    \end{equation*}
    such that, for $i<k$,
    \begin{equation*}
      \varphi_{k-1} d_i^E= (d_i^U\times d_i^Y)\varphi_k,
    \end{equation*}
    and, for all $i$,
    \begin{equation*}
      \varphi_k s_i^E=(s_i^U\times s_i^Y)\varphi_k.
    \end{equation*}
    Using this, we see that sections of $P^{\geq n}(fp)_1$ consist of pairs
    \begin{align*}
        (u,y)&\in U_{n+1}\times Y_{n+1}\intertext{such that, for $i<n$,}
        d_iy&=s_{n-1}d_{n-1}d_iy,\\
        d_iu&=s_{n-1}d_{n-1}d_iu,\intertext{and}
        f(u)&=s_nd_nf(u).
    \end{align*}
    Similarly, sections of $P^{\geq n}(p)_1$ consist of pairs
    \begin{align*}
        (u,y)&\in U_{n+1}\times Y_{n+1}\intertext{such that, for $i<n$,}
        d_iy&=s_{n-1}d_{n-1}d_iy,\intertext{and}
        u&=s_nd_nu.
    \end{align*}
    These equations say that if $(u,y)$ is a section of $P^{\geq n}(fp)_1$, then $u$ is a section of $P^{\geq n}(f)_1$ and the natural map
    \begin{equation}\label{eq:descpfmp1}
        \begin{xy}
            \morphism(0,0)<1000,0>[P^{\geq n}(fp)_1`P^{\geq n}(f)_1\times_{U_n}P^{\geq n}(p)_1;]
            \morphism(0,-200)/|->/<1000,0>[(u,y)`(u,(s_nd_nu,y));]
        \end{xy}
    \end{equation}
    is an isomorphism.

    Because $p$ is an $n$-stack, $P^{\geq n}(p)_{\bullet}$ is a Lie $0$-groupoid (Theorem \ref{thm:kmor}).  The target map
    \begin{equation*}
        \begin{xy}
            \morphism<750,0>[P^{\geq n}(p)_1`P^{\geq n}(p)_0;]
        \end{xy}
    \end{equation*}
    is therefore an isomorphism.  Using this isomorphism and the map \ref{eq:descpfmp1}, we obtain the desired isomorphism
    \begin{equation*}
        \begin{xy}
            \morphism(0,0)|a|<1000,0>[P^{\geq n}(fp)_1`P^{\geq n}(f)_1\times Y_n;\psi]
            \morphism(0,0)|b|<1000,0>[P^{\geq n}(fp)_1`P^{\geq n}(f)_1\times Y_n;\cong]
        \end{xy}
    \end{equation*}
\end{proof}

\begin{proof}[Proof of Theorem \ref{thm:desc}]
  Axiom \ref{axiom:gode} reduces the proof to showing that
  \begin{equation*}
    \begin{xy}
        \morphism<1250,0>[P^{\geq n}(fp)_1`P^{\geq n}(fp)_0\times P^{\geq n}(fp)_0;]
    \end{xy}
  \end{equation*}
  is a regular embedding. The map
    \begin{equation*}
        \begin{xy}
            \morphism<750,0>[P^{\geq n}(f)_{\bullet}`M_n(f);\pi]
        \end{xy}
    \end{equation*}
    is a 1-hypercover, because $f$ is an $(n+1)$-hypercover (Theorem \ref{thm:kmorhyp}).  In particular, the map $\mu_1(\pi)$ gives an isomorphism
    \begin{equation*}
        \begin{xy}
            \morphism(0,0)|a|<1000,0>[P^{\geq n}(f)_1`U_n\times_{M_n(f)}U_n;\mu_1(\pi)]
            \morphism(0,0)|b|<1000,0>[P^{\geq n}(f)_1`U_n\times_{M_n(f)}U_n;\cong]
        \end{xy}
    \end{equation*}
    The canonical map
    \begin{equation*}
        \begin{xy}
            \morphism(0,0)/^{ (}->/<1000,0>[U_n\times_{M_n(f)}U_n
            `U_n\times U_n;\imath]
        \end{xy}
    \end{equation*}
    is a regular embedding.  Composing this with the map above, we obtain a regular embedding
    \begin{equation*}
        \begin{xy}
            \morphism<1000,0>[P^{\geq n}(f)_1`U_n\times U_n;\imath\mu_1(\pi)]
        \end{xy}
    \end{equation*}
    The isomorphisms
    \begin{equation*}
        \begin{xy}
            \morphism<750,0>[P^{\geq n}(fp)_0`E_n;\cong]
            \morphism(750,0)<750,0>[E_n`U_n\times Y_n;\cong]
        \end{xy}
    \end{equation*}
    and
    \begin{equation*}
        \begin{xy}
            \morphism(0,0)|a|<1000,0>[P^{\geq n}(fp)_1`P^{\geq n}(f)_1\times Y_n;\psi]
            \morphism(0,0)|b|<1000,0>[P^{\geq n}(fp)_1`P^{\geq n}(f)_1\times Y_n;\cong]
        \end{xy}
    \end{equation*}
    allow us to factor the map
    \begin{equation*}
        \begin{xy}
            \morphism<1200,0>[P^{\geq n}(fp)_1`P^{\geq n}(fp)_0\times P^{\geq n}(fp)_0;]
        \end{xy}
      \end{equation*}
      as the composite
      \begin{multline*}
        \begin{xy}
          \morphism<900,0>[P^{\geq n}(fp)_1`P^{\geq n}(fp)_1\times Y_n;%
          \Gamma_{d_{n+1}^E}]
          \morphism(900,0)<1200,0>[P^{\geq n}(fp)_1\times Y_n`%
          Y_n\times P^{\geq n}(f)_1\times Y_n;%
          \psi\times 1_Y]
        \end{xy} \\
        \begin{xy}
          \morphism(2100,0)<1400,0>[`Y_n\times U_n\times U_n\times Y_n;%
          1_Y\times(\imath\mu_1(\pi))\times 1_Y]
        \end{xy}
      \end{multline*}
      Each map in this sequence is a regular embedding.
\end{proof}

\section{Strict Lie n-Groups and their Actions}\label{sec:slie}
Principal and associated bundles for discrete simplicial groups provide examples of local $n$-bundles in $\sst$.  This theory was developed by Barratt, Guggenheim and Moore \cite{BGM:59}.  In this section, we develop analogous results for simplicial Lie groups.  While we restrict to simplicial groups for ease of exposition, the results and proofs carry over to simplicial Lie groupoids. We refer the reader to \cite[Section 4]{NSS:15} for a related treatment of principal bundles for simplicial Lie groups.

\begin{definition}
    A \emph{simplicial Lie group} $G_{\bullet}$ in $\C$ is a simplicial diagram in the category of group objects in $\C$.  Denote by $\sgp(\C)$ the category of simplicial Lie groups.
\end{definition}

Eilenberg and Mac Lane \cite{EiM:53} introduced a pair of functors $\W$ and $\LW$ from simplicial groups to simplicial sets which generalize the universal bundle and nerve of a group.

\begin{definition}
  Let $G_{\bullet}$ be a simplicial group.
  \begin{enumerate}
  \item The \emph{total space $\W_{\bullet}G$ of the universal
      $G_{\bullet}$-bundle} is the simplicial set with
    \begin{align*}
      \W_nG&:= G_0\times\cdots\times G_n\\
      d_i(g_0,\ldots, g_n)&:=(g_0,\ldots,g_{i-2},g_{i-1}d_ig_i,d_ig_{i+1},\ldots,d_ig_n)\\
      s_i(g_0,\ldots, g_n)&:=(g_0,\ldots,g_{i-1},e,s_ig_i,\ldots,s_ig_n)
    \end{align*}
  \item The \emph{nerve} $\LW_{\bullet}G$ of $G_{\bullet}$ is the
    simplicial set with
    \begin{align*}
        \LW_0G&:=\ast,\intertext{and, for $n>0$,}
        \LW_nG &:= G_0\times\cdots\times G_{n-1} \\
        d_i(g_0,\ldots, g_{n-1}) &:=
        (g_0,\ldots,g_{i-2},g_{i-1}d_ig_i,d_ig_{i+1},\ldots,d_ig_{n-1}) \\
        s_i(g_0,\ldots, g_{n-1}) &:=
        (g_0,\ldots,g_{i-1},e,s_ig_i,\ldots,s_ig_{n-1})
    \end{align*}
  \item The assignment which sends
    $(g_0,\ldots,g_n)\in\W_nG$ to $(g_0,\ldots,g_{n-1})\in\LW_nG$
    defines a twisted Cartesian product
    $\W_{\bullet}G\rightarrow\LW_{\bullet}G$.  This is the
    \emph{universal $G_{\bullet}$-bundle}.
  \end{enumerate}
\end{definition}
Because $\C$ has finite products, the same formulas give functors $\W$
and $\LW$ from the category of simplicial Lie groups to the
category $\sC$.

\begin{definition}
  If $G_{\bullet}$ is a simplicial Lie group and $X_\bullet$ is a simplicial object in $\C$, then a left
  \emph{left action $G_\bullet\circlearrowleft X_\bullet$} consists of maps
  \begin{equation*}
    \begin{xy}
      \morphism(0,0)<750,0>[G_k\times X_k`X_k;]
      \morphism(0,-200)/|->/<750,0>[(g,x)`gx;]
    \end{xy}
  \end{equation*}
  for each $k$, such that, in all dimensions and for all $i$:
    \begin{align*}
        g_1(g_2x)&=(g_1g_2)x,\\
        ex&=x,\\
        d_i(gx)&=(d_ig)(d_ix),\text{ and}\\
        s_i(gx)&=(s_ig)(s_ix).
    \end{align*}
    Right actions are defined analogously.
\end{definition}
When $G_{\bullet}$ and $X_{\bullet}$ are constant simplicial diagrams, this is the usual notion of a left action of a Lie group. The right action of $G_{\bullet}$ on itself induces a right $G_{\bullet}$-action on $\W_\bullet G$.
\begin{definition}
    Suppose we have a left action $G_{\bullet}\circlearrowleft X_{\bullet}$.  The \emph{homotopy quotient} $(\W G\times_G X)_{\bullet}$ is defined by
    \begin{align*}
        (\W G\times_G X)_n &:= \LW_n G\times X_n\\
        s_i(g_0,\ldots, g_{n-1},x)&:=(g_0,\ldots,g_{i-1},e,s_ig_i,\ldots,s_ig_{n-1},s_ix)\\
        d_n(g_0,\ldots, g_{n-1},x)&:=(g_0,\ldots,g_{n-2},g_{n-1}d_nx)\intertext{and, for $i<n$}
        d_i(g_0,\ldots, g_{n-1},x)&:=(g_0,\ldots,g_{i-1}d_ig_i,\ldots,d_ig_{n-1},d_ix).
    \end{align*}
\end{definition}
If $G_{\bullet}$ acts on $X_{\bullet}$ and $Y_{\bullet}$ and $f:X_{\bullet}\rightarrow Y_{\bullet}$ is $G_{\bullet}$-equivariant, then $f$ induces a map of homotopy quotients
\begin{equation*}
    \begin{xy}
        \morphism<1000,0>[(\W G\times_G X)_{\bullet}`(\W G\times_G Y)_{\bullet};1\times_G f]
    \end{xy}
\end{equation*}
given on $n$-simplices by $1_{\LW G}\times f_n$.

\begin{definition}\label{def:slien}
    A \emph{strict Lie $n$-group} is a simplicial Lie group $G_{\bullet}$ such that the horn-filling maps $\lambda^k_i(G)$ are isomorphisms for $k\geq n$.
\end{definition}
We might have defined a strict Lie $n$-group as a simplicial Lie group such that the maps $\lambda^k_i(G)$ were also covers for all $k<n$.  An argument due to Moore shows that this follows from our definition.
\begin{proposition}
    Let $G_{\bullet}$ be a strict Lie $n$-group.  The simplicial object underlying $G_{\bullet}$ is a Lie $(n-1)$-groupoid.
\end{proposition}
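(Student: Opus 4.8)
\emph{The plan.}
The conclusion ``$G_\bullet$ is a Lie $(n-1)$-groupoid'' unpacks into three assertions: for every $k>0$ and $0\le i\le k$, the limit $\Lambda^k_i(G)$ exists in $\C$, the horn map $\lambda^k_i(G)\colon G_k\to\Lambda^k_i(G)$ is a cover, and it is an isomorphism for $k>n-1$. The isomorphism clause (for $k\ge n$) is exactly Definition~\ref{def:slien}, and isomorphisms are covers, so the plan is to prove, \emph{for all} $k>0$, that $\Lambda^k_i(G)$ exists and $\lambda^k_i(G)$ is a cover. This is the Moore-style fact that the simplicial object underlying a simplicial group is Kan with submersive horn maps, and it makes no reference to $n$.

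\emph{The key steps.}
I would induct on $k$. The case $k=1$ is essentially free: $\hom(\Lambda^1_i,G)=G_0$, while $\lambda^1_0(G)$ and $\lambda^1_1(G)$ are the face maps $d_1,d_0\colon G_1\to G_0$, each a homomorphism of Lie groups split by the degeneracy $s_0$. For the inductive step, suppose $\lambda^j_l(G)$ is a cover (so in particular $\Lambda^j_l(G)$ exists) for all $0<j<k$. Since $\Lambda^k_i$ is collapsible and $(k-1)$-dimensional, Lemma~\ref{lemma:psrep}, applied with $X_\bullet=G_\bullet$, $Y_\bullet=\ast$, $S=T=\Lambda^k_i$ and truncation level $k-1$, gives the existence of $\Lambda^k_i(G)=\hom(\Lambda^k_i,G)$ in $\C$. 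Being a limit of Lie groups along the face and degeneracy homomorphisms of $G_\bullet$, this $\Lambda^k_i(G)$ is itself a Lie group, and $\lambda^k_i(G)$ is a homomorphism. Finally, Moore's explicit filler for a $\Lambda^k_i$-horn in a simplicial group is a finite expression in the multiplication, the inversion and the operators $d_j,s_j$, so it defines a morphism $\sigma\colon\Lambda^k_i(G)\to G_k$ in $\C$ with $\lambda^k_i(G)\circ\sigma=\mathrm{id}$. Hence $\lambda^k_i(G)$ is a homomorphism of Lie groups admitting a section.

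The remaining point, which closes both the base case and the induction, is that any homomorphism of Lie groups $p\colon A\to B$ with a section $\sigma$ is a cover, and I expect this to be formal: $\beta(b,a):=(b,\sigma(b)a)$ is an automorphism of $B\times A$, so $\alpha:=\pr_A\circ\beta\colon B\times A\to A$, $(b,a)\mapsto\sigma(b)a$, is a cover (projections off a factor are covers, by Axioms~\ref{axiom:fib}--\ref{axiom:top}); likewise $\gamma(b,a):=(b\cdot p(a),a)$ is an automorphism of $B\times A$, so $p\circ\alpha=\pr_B\circ\gamma\colon(b,a)\mapsto b\cdot p(a)$ is a cover; and Axiom~\ref{axiom:fg+g}, applied to the composable pair $\alpha$ then $p$, forces $p$ to be a cover. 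Granting this, $\lambda^k_i(G)$ is a cover for all $k>0$, so $G_\bullet$ is a Lie $\infty$-groupoid, and Definition~\ref{def:slien} then identifies it as a Lie $(n-1)$-groupoid.

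\emph{The main obstacle.}
The only genuinely substantive input is Moore's horn-filling formula, together with the observation that it assembles into a morphism of $\C$ rather than a mere map of underlying sets; the rest is bookkeeping with Lemma~\ref{lemma:psrep} and the cover axioms. The difficulty I anticipate is writing Moore's filler in a form that makes this naturality transparent, and carrying through the induction with a hypothesis exactly strong enough to invoke Lemma~\ref{lemma:psrep} at each stage. Notably, no ``constant rank'' hypothesis on $\C$ is needed: the split-epimorphism argument above suffices to pass from ``admits a section'' to ``cover''.
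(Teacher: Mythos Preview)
Your proposal is correct and follows essentially the same strategy as the paper: induct on $k$, invoke Lemma~\ref{lemma:psrep} to produce $\Lambda^k_i(G)$, use Moore's explicit filler, and conclude via Axiom~\ref{axiom:fg+g} that $\lambda^k_i(G)$ is a cover.

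The only difference is cosmetic packaging of the final step. The paper runs Moore's construction with a free parameter $g\in G_k$ (rather than $g=e$), obtaining a single automorphism $\varphi$ of $\Lambda^k_i(G)\times G_k$ through which the first projection factors as $\lambda^k_i(G)\circ\pi_{G_k}\circ\varphi$; Axiom~\ref{axiom:fg+g} then finishes. You instead fix $g=e$ to extract a section $\sigma$, and separately prove the clean lemma ``a Lie-group homomorphism admitting a section in $\C$ is a cover'' via your two automorphisms $\beta,\gamma$ of $B\times A$. Your lemma is a pleasant standalone statement (and, as you note, does not require $\sigma$ to be a homomorphism, only $p$), while the paper's version avoids isolating it by folding the free parameter into $\varphi$; but the underlying mechanism---shear by an automorphism of the product, compare to a projection, apply Axiom~\ref{axiom:fg+g}---is identical.
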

\begin{proof}
    We perform an induction on the dimension of the horns to show that the maps $\lambda^k_i(G)$ are covers for $k<n$.

    Suppose that for $l<k$ and all $i$, the limit $\Lambda^l_i(G)$ exists and the map $\lambda^l_i(G)$ is a cover. Lemma \ref{lemma:psrep} shows that the limit $\Lambda^k_i(G)$ exists for all $i$.

    Sections of $\Lambda^k_i(G)\times G_k$ are tuples
    \begin{equation*}
        ((g_0,\ldots,-,\ldots,g_k),g)\in\Lambda^k_i(G)\times G_k
    \end{equation*}
    such that, for $j<m\neq i$,
    \begin{align*}
        d_{m-1}g_j&=d_jg_m.
    \end{align*}
    We perform an induction on $0\le\ell\le k+1$ to construct sections $g^\ell\in G_k$ such that for $j<\ell$ we have
    \begin{equation*}
        d_jg^\ell=g_j.
    \end{equation*}
    Fix $((g_0,\ldots,-,\ldots,g_k),g)\in\Lambda^k_i(G)\times G_k$, and set
    \begin{equation*}
        g^0:=g.
    \end{equation*}
    Now suppose that for $0\le\ell$, we have $g^\ell\in G_k$ such that, for $j<\ell$,
    \begin{equation*}
        d_jg^\ell=g_j.
    \end{equation*}
    We define
    \begin{equation*}
        a^\ell_j:=g_j(d_jg^\ell)^{-1}\in G_{k-1}.
    \end{equation*}
    The horn relations on the $g_j$ ensure that $(a^\ell_0,\ldots,-,\ldots,a^\ell_k)$ defines a section of $\Lambda^k_i(G)$, so we have
    \begin{equation*}
        ((a^\ell_0,\ldots,-,\ldots,a^\ell_k),g^l)\in\Lambda^k_i(G)\times G.
    \end{equation*}
    We define
    \begin{equation*}
        g^{l+1}:=(s_\ell a^\ell_\ell)g^\ell.
    \end{equation*}
    A short exercise shows that, for $j<\ell+1$,
    \begin{align*}
        d_jg^{\ell+1}=g_j.
    \end{align*}
    This completes the induction step.  We now define
    \begin{equation*}
        \begin{xy}
            \morphism(0,0)<1800,0>[\Lambda^k_i(G)\times G_k`\Lambda^k_i(G)\times G_k;\varphi]
            \morphism(0,-250)/|->/<1800,0>[((g_0,\ldots,-,\ldots,g_k),g)`((g_0(d_0g^{k+1})^{-1},\ldots,-,\ldots,g_k(d_kg^{k+1})^{-1}),g^{k+1});]
        \end{xy}
    \end{equation*}
    By construction, $\varphi$ is an isomorphism.  It factors the projection
    \begin{equation*}
        \begin{xy}
            \morphism<750,0>[\Lambda^k_i(G)\times G_k`\Lambda^k_i(G);]
        \end{xy}
    \end{equation*}
    as
    \begin{equation*}
        \begin{xy}
            \square/>`>`>`<-/<1150,500>[\Lambda^k_i(G)\times G_k`\Lambda^k_i(G)\times G_k`\Lambda^k_i(G)`G_k;\varphi``\pi_{G_k}`\lambda^k_i(G)]
            \morphism(0,500)|b|<1150,0>[\Lambda^k_i(G)\times G_k`\Lambda^k_i(G)\times G_k;\cong]
        \end{xy}
    \end{equation*}
    Axiom \ref{axiom:fg+g} guarantees that $\lambda^k_i(G)$ is a
    cover. Lemma \ref{lemma:psrep} shows that the limit $\Lambda^{k+1}_i(G)$ exists for all $i$.  This concludes the induction step.
\end{proof}

Observe that a strict Lie 1-group is a Lie group viewed as a constant simplicial diagram.  A strict Lie 2-group is a simplicial Lie group $G_{\bullet}$ such that the simplicial object underlying $G_{\bullet}$ is the nerve of a Lie groupoid.  Therefore, a strict Lie 2-group could be equivalently described as a Lie group in the category of Lie groupoids.  Strict Lie 2-groups are relatively abundant.  For example, the Lie 2-group associated to a finite-dimensional nilpotent differential graded Lie algebra concentrated in degrees $[-1,0]$ is a strict Lie 2-group (see \cite{Get:02} for details).

\begin{theorem}\label{thm:wbar}\mbox{}
  \begin{enumerate}
      \item The nerve of a strict Lie $n$-group is a Lie $n$-group.
      \item The homotopy quotient of a $G_{\bullet}$-equivariant $n$-stack is an $n$-stack.
  \end{enumerate}
\end{theorem}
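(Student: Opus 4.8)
The plan is to prove (1) by constructing horn fillers for $\LW_\bullet G$ directly from the group structure of $G_\bullet$, in the spirit of Moore's argument and of the Proposition immediately preceding this theorem, and to prove (2) by reducing it to the behaviour of $n$-stacks under pullback along covers.

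For part (1): since $\LW_0G=\ast$, it suffices to show that $\lambda^k_i(\LW G)$ is a cover for every $k>0$ and $0\le i\le k$, and an isomorphism for $k>n$. I would argue by induction on $k$, as in the preceding Proposition: assuming $\Lambda^l_i(\LW G)$ exists and $\lambda^l_i(\LW G)$ is a cover for $l<k$, Lemma~\ref{lemma:psrep} (applied with target $\ast$, using collapsibility of $\Lambda^k_i$) produces $\Lambda^k_i(\LW G)$. I would then solve the defining face equations for a filler $(g_0,\dots,g_{k-1})$ of a $\Lambda^k_i$-horn coordinate by coordinate: all but one coordinate are forced by the horn data together with translations and degeneracies in the $G_j$ (translations being isomorphisms), while the admissible values of the remaining coordinate form a torsor over $\lambda^{k-1}_j(G)$ for a suitable $j$. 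By the preceding Proposition $G_\bullet$ is a Lie $(n-1)$-groupoid, so this $\lambda^{k-1}_j(G)$ is a cover, and an isomorphism once $k-1\ge n$. Assembling these facts exhibits an isomorphism carrying $\lambda^k_i(\LW G)$ to a projection twisted by $\lambda^{k-1}_j(G)$, so by Axiom~\ref{axiom:fg+g} it is a cover, and an isomorphism for $k>n$. The extreme indices $i=0$ and $i=k$ — which, as in the computation of $\Lambda^k_i(N_\bullet G)$, require the inverse — are reduced to an interior index by rearranging the face equations using the inverse of $G_\bullet$.

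For part (2): let $f\colon X_\bullet\to Y_\bullet$ be a $G_\bullet$-equivariant $n$-stack. The homotopy quotient construction supplies the evident simplicial quotient maps $\rho_X\colon\W G\times X\to\W G\times_G X$ and $\rho_Y\colon\W G\times Y\to\W G\times_G Y$, each levelwise a cover (a trivial bundle with fibre the corresponding $G_k$), and these fit into a square with $1\times f$ along the top and $1\times_G f$ along the bottom. Using that $G_\bullet$ acts freely on $\W_\bullet G$ and that $f$ is equivariant, one checks levelwise that this square is a pullback in $\sC$. Now $1\times f$ is the pullback of $f$ along the projection $\W G\times Y\to Y$, hence an $n$-stack by Theorem~\ref{thm:hk1}. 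Since $\hom(\Lambda^k_i,-)$ preserves limits, applying it to the pullback square and simplifying identifies $\lambda^k_i(1\times f)$ with the pullback of $\lambda^k_i(1\times_G f)$ along the cover $\rho_{Y,k}\colon(\W G\times Y)_k\to(\W G\times_G Y)_k$. As covers and isomorphisms are reflected under pullback along covers (Axioms~\ref{axiom:fg+g} and~\ref{axiom:subcan}), and $\lambda^k_i(1\times f)$ is a cover for all $k$ and an isomorphism for $k>n$, the same holds for $\lambda^k_i(1\times_G f)$; existence of the $\Lambda^k_i(1\times_G f)$ follows from Lemma~\ref{lemma:psrep} by a parallel induction. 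Part (1), together with the analogous (easier) statement for $\W$, guarantees that $\LW_\bullet G$, $\W G\times_G X$ and $\W G\times_G Y$ are Lie $\infty$-groupoids, so the assertion is meaningful.

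I expect the main obstacle to be the combinatorial core of part (1): identifying, for each pair $(k,i)$, the ``free'' coordinate, writing down the explicit expressions — built from the multiplication, unit and inverse of $G_\bullet$ — for the remaining coordinates, and checking that these are compatible with all the simplicial identities, so that the resulting bijections are genuine isomorphisms in $\C$ rather than merely bijections of points. For part (2), the point requiring care is that forming the horn objects $\Lambda^k_i$ commutes, up to pullback along a cover, with the free quotient by $G_\bullet$; this reduces once more to the bookkeeping just indicated and to the existence of the auxiliary limits $\hom(\Lambda^k_i,G)$, which is furnished by Lemma~\ref{lemma:psrep} since $\Lambda^k_i$ is collapsible.
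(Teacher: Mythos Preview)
Your plan for Part (1) is essentially the paper's proof: the paper writes down, for each $k$ and $i$, an explicit isomorphism $\Lambda^k_i(\LW G)\cong\LW_{k-1}G\times\Lambda^{k-1}_j(G)$ (with $j=i$ for $i<k$ and $j=k-1$ for $i\in\{k-1,k\}$) fitting into a commuting square that identifies $\lambda^k_i(\LW G)$ with $1\times\lambda^{k-1}_j(G)$. This is exactly the ``one free coordinate, governed by a horn in $G$'' picture you describe.

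For Part (2) you take a genuinely different route. The paper does not pass through a quotient map $\rho$ or invoke reflection of covers; it simply repeats the Part (1) computation for the homotopy quotient, producing an explicit isomorphism $\Lambda^k_i(1\times_G\varphi)\cong\LW_kG\times\Lambda^k_i(\varphi)$ under which $\lambda^k_i(1\times_G\varphi)$ becomes $1\times\lambda^k_i(\varphi)$. This yields existence of the relative horn objects and the cover/isomorphism property in one stroke. Your descent argument is valid and more conceptual, but there is a gap in the sentence ``Part (1), together with the analogous statement for $\W$, guarantees that $\W G\times_G X$ and $\W G\times_G Y$ are Lie $\infty$-groupoids.'' Part (1) concerns $\LW G$, not homotopy quotients, and the assertion for $\W G\times_G Y$ is the special case $Y\to\ast$ of the very statement you are proving. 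What you actually need to run Lemma~\ref{lemma:psrep} in your induction is only that the vertex maps $(\W G\times_G Y)_k\to(\W G\times_G Y)_0$ are covers; this does hold, but by your own reflection trick applied to face maps (precompose with the levelwise cover $\rho_{Y,k}$, observe that the composite factors as a face map of the honest product $\W G\times Y$ followed by $\rho_{Y,k-1}$, and apply Axiom~\ref{axiom:fg+g}), not by invoking Part (1). Once this is supplied your approach goes through; it trades the paper's explicit combinatorics for an appeal to Theorem~\ref{thm:hk1} and the locality of covers and isomorphisms.
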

\begin{proof}
  Let $G_{\bullet}$ be a strict $n$-group, or let
  $\varphi:X_{\bullet}\rightarrow Y_{\bullet}$ be a
  $G_{\bullet}$-equivariant $n$-stack.  We show that, in the first case,
  $\LW_{\bullet}G$ is a Lie $n$-group, and, in the second, that
  $1\times_{G}\varphi$ is an $n$-stack.

  Denote sections of $\LW_kG$ by
  \begin{align*}
    \g^j&=(g^j_0,\ldots,g^j_{k-1})\in
    G_0\times\cdots\times G_{k-1} =
    \LW_kG.
  \end{align*}
  Denote sections of $\Lambda^k_i(\LW G)$ by
  \begin{equation*}
    (\g^0,\ldots,-,\ldots,\g^k)\in \Lambda^k_i(\LW G).
  \end{equation*}
  We now give a series of isomorphisms which relate horns in the
  nerve or homotopy quotient to horns in the strict Lie
  $n$-group or equivariant $n$-stack.  The formulas proceed from
  the observation that the highest face in these horns determines all
  but the last coordinates of the lower ones; these last coordinates
  themselves determine a horn in the original simplicial Lie group
  or $G_{\bullet}$-map.

  For $k>0$, the maps $\Lambda^k_i(\LW G)\to
  \LW_{k-1}G\times\Lambda^{k-1}_i(G)$ given by
  \begin{equation*}
    (\g^0,\ldots,\widehat{\g^i},\ldots,\g^k) \mapsto
    \begin{cases}
      (\g^k,(g^0_{k-2},\ldots,-,\ldots,g^{k-2}_{k-2},(g^k_{k-2})^{-1}g^{k-1}_{k-2}))
      & i<k-1 , \\[5pt]
      (\g^k,(g^0_{k-2},\ldots,g^{k-2}_{k-2},-)) & i=k-1 , \\[5pt]
      (\g^{k-1},(g^0_{k-2},\ldots,g^{k-2}_{k-2},-)) & i=k ,
    \end{cases}
  \end{equation*}
  are isomorphisms. For $k>0$ and $i<k$, the maps
  $\Lambda^k_i(\W G\times_G\varphi)\to
  \LW_k G\times\Lambda^k_i(\varphi)$ given by
  \begin{multline*}
    (((\g^0,x_0),\ldots,\widehat{(\g^i,x_i)},\ldots,(\g^k,x_k)),(\h,y))
    \\
    \mapsto
    \begin{cases}
      (\h,((x_0,\ldots,\widehat{x_i},\ldots,x_{k-1},(h_{k-1})^{-1}x_k),y))
      & i<k , \\
      (\h,((x_0,\ldots,x_{k-1},-),y)) & i=k ,
    \end{cases}
  \end{multline*}
  are isomorphisms. For $k>1$, the isomorphisms for horns in the
  nerve fit into the following commuting squares:
  \begin{align*}
    & \begin{xy}
      \Square(0,0)/>`=`>`>/[\LW_k G`\Lambda^k_i(\LW G)`%
      \LW_{k-1}G\times G_{k-1}`%
      \LW_{k-1}G\times\Lambda^{k-1}_i(G);%
      \lambda^k_i(\LW G)``\cong`1\times\lambda^{k-1}_i(G)]
    \end{xy} \\
    & \begin{xy}
      \Square(0,0)/>`=`>`>/[\LW_k G`\Lambda^k_k(\LW G)`%
      \LW_{k-1}G\times G_{k-1}`%
      \LW_{k-1}G\times\Lambda^{k-1}_{k-1}(G);%
      \lambda^k_k(\LW G)``\cong`1\times\lambda^{k-1}_{k-1}(G)]
    \end{xy}
  \end{align*}
  For $k=1$, we have
  \begin{equation*}
    \begin{xy}
      \Square/>`=`>`>/[\LW_1 G`\Lambda^1_i(\LW G)`G_0`\ast;\lambda^1_i(\LW G)``\cong`]
    \end{xy}
  \end{equation*}
  Similarly, for $k\geq 1$, the isomorphisms for horns in the homotopy
  quotients fit into the commuting squares
  \begin{equation*}
    \begin{xy}
      \Square/>`=`>`>/[\W G\times_G X_k`%
      \Lambda^k_i(\W G\times_G\varphi)`%
      \LW_k G\times X_k`%
      \LW_k G\times\Lambda^k_i(\phi);%
      \lambda^k_i(1\times_G\varphi)``\cong`1\times\lambda^k_i(\varphi)]
    \end{xy}
  \end{equation*}
  These squares show that the relevant horn-filling maps for
  $\LW_{\bullet}G$ and $1\times_G\varphi$ are covers for
  all $k$ and isomorphisms for $k>n$.
\end{proof}

\begin{remark}
    While we do not need it for this paper, one could define a strict $n$-stack as a homomorphism of simplicial Lie groups such that the relative horn-filling maps are isomorphisms in dimensions at least $n$.  The analogues of the results above hold in the relative case, with minimal changes to the proofs.  One could also make analogous definitions for simplicial Lie groupoids.  The analogues of the results above hold, with minimal changes to the proofs.
\end{remark}

\section{A Finite Dimensional String 2-Group}
\label{sec:string}

In this section, we specialize to the category of smooth manifolds and
apply our results to construct finite dimensional Lie 2-groups.  Let
$A$ be an abelian group. For each natural number $n$, Eilenberg and
MacLane introduced a simplicial abelian group $K(A,n)_{\bullet}$ whose
geometric realization represents the cohomology functor $H^n(-;A)$.
They further observed that $\LW_{\bullet}K(A,n)$ is isomorphic to
$K(A,n+1)_{\bullet}$.  This construction and identification
also exist for abelian Lie groups.

\begin{definition}
  Let $G$ be a Lie group. Let $A$ an abelian Lie group.
  \begin{enumerate}
  \item An \emph{$A$-valued $n$-cocycle} on $\LW_{\bullet}G$ is a span
    \begin{equation*}
      \begin{xy}
        \Atriangle/>`>`{}/<600,0>[U_{\bullet}`\LW_{\bullet}G`K(A,n)_{\bullet};``]
      \end{xy}
    \end{equation*}
    such that $U_{\bullet}\rightarrow\LW_{\bullet}G$ is a hypercover.
  \item An \emph{equivalence of cocycles} is a commuting diagram of cocycles
    \begin{equation*}
      \begin{xy}
        \Atrianglepair(0,0)/>`<-`>`<-`>/<600,300>[U^0_{\bullet}`\LW_{\bullet}G`V_{\bullet}`K(A,n)_{\bullet};````]
        \Vtrianglepair(0,-300)/<-`>`<-`>`<-/<600,300>[\LW_{\bullet}G`V_{\bullet}`K(A,n)_{\bullet}`U^1_{\bullet};````]
      \end{xy}
    \end{equation*}
    such that the maps $V_{\bullet}\rightarrow U^i_{\bullet}$ are hypercovers.
  \end{enumerate}
\end{definition}

The connected $2$-types $\mathcal{G}_{\bullet}\in\ssm$ which have
arisen in the literature are determined by
\begin{enumerate}
\item a Lie group $G$,
\item an abelian Lie group $A$, and
\item an equivalence class of $3$-cocycles
  \begin{equation*}
    \begin{xy}
      \Atriangle/>`>`{}/<600,0>[U_{\bullet}`\LW_{\bullet}G`K(A,3)_{\bullet};``]
    \end{xy}
  \end{equation*}
\end{enumerate}
Much work has gone into finding geometric models for smooth $2$-types.
By pulling back the universal twisted $K(A,2)$-bundle along a cocycle,
we obtain a local $2$-bundle
\begin{equation*}
  \begin{xy}
    \morphism[E_{\bullet}`U_{\bullet};].
  \end{xy}
\end{equation*}
The composite
\begin{equation*}
  \begin{xy}
    \morphism[E_{\bullet}`U_{\bullet};]
    \morphism(500,0)[U_{\bullet}`\LW_{\bullet}G;]
  \end{xy}
\end{equation*}
is an $\infty$-stack.  This shows that connected smooth $2$-types can be
realized as finite dimensional Lie $\infty$-groups.

Over the last decade there have been many attempts to do better.  The most relevant of these is provided by Schommer-Pries
\cite{Sch:11} who showed that connected smooth $2$-types can be
realized as weak group objects in the bicategory of finite dimensional
Lie groupoids.  Zhu \cite{Zhu:09}, drawing on ideas of Duskin,
constructed a nerve for such weak group objects and showed that the
nerve is a Lie $2$-group.

The tools in this article allow us to construct a Lie $2$-group
$X_{\bullet}$ directly from the data above.  The object
produced is equivalent to the one obtained by Zhu from Schommer-Pries.  Our methods
extend to $n>2$.

By Theorem \ref{thm:wbar}, Proposition \ref{prop:coc} specializes to the following.
\begin{corollary}
  \label{cor:coc}
  Any $A$-valued $n$-cocycle
  \begin{equation*}
    \begin{xy}
      \Atriangle/>`>`{}/<600,0>[U_{\bullet}`\LW_{\bullet}G`K(A,n)_{\bullet};f``]
    \end{xy}
  \end{equation*}
  factors uniquely through $\tn(f)$ as in the diagram
  \begin{equation*}
    \begin{xy}
      \Vtrianglepair/<-`>`<-`>`<-/<600,300>[\LW_{\bullet}G`U_{\bullet}`K(A,n)_{\bullet}`\tn(U,f)_\bullet;f``\tn(f)``]
    \end{xy}
  \end{equation*}
  This factorization is an equivalence of cocycles.
\end{corollary}

We can now use Theorem \ref{thm:desc} to produce a Lie $2$-group from
an $A$-valued $3$-cocycle on $\LW_{\bullet}G$. We can assume, without
loss of generality, that the $3$-cocycle
\begin{equation}\label{eq:coc}
  \begin{xy}
    \Atriangle|aab|/>`>`{}/<600,0>[U_{\bullet}`\LW_{\bullet}G`K(A,3)_{\bullet};f`\varphi`]
  \end{xy}
\end{equation}
has $U_0=\ast$ and $f$ a $3$-hypercover.  We pull back the
universal $K(A,2)$-bundle along $\varphi$ to obtain a local $2$-bundle
\begin{equation*}
  \begin{xy}
    \morphism<750,0>[\varphi^{\ast}\W K(A,2)`U_{\bullet};p]
  \end{xy}
\end{equation*}
We descend this local $2$-bundle along the $3$-hypercover $f$, as in
Theorem \ref{thm:desc}. We obtain a $2$-stack
\begin{equation*}
  \begin{xy}
    \morphism<1240,0>[X_\bullet:=\tau_2(\varphi^{\ast}\W K(A,2),fp)_{\bullet}`\LW_{\bullet}G;]
  \end{xy}
\end{equation*}
The object $X_\bullet$ is the desired Lie $2$-group.

We now examine $X_{\bullet}$ in more detail.  For simplicity, we ignore
degeneracies.  The hypercover in the cocycle \ref{eq:coc} is
determined by its $2$-skeleton (Proposition \ref{prop:tnhyp}).  The
$2$-skeleton consists of
\begin{enumerate}
\item a cover $f_1:U\rightarrow G$, which we view as a 1-truncated hypercover, and
\item a cover $f_2:V\rightarrow(\csk_1 U\times_{\Csk_1\LW G}\LW G)_2$.
\end{enumerate}
The Lie $2$-group $X_{\bullet}$ has the same $1$-skeleton as this hypercover.

The $2$-simplices of $X_{\bullet}$ are determined by the Lie
$1$-groupoid $P^{\geq 2}(fp)_{\bullet}$.  Its vertex manifold,
$P^{\geq 2}(fp)_0$, is isomorphic to $V\times A$.  We showed that
\begin{align*}
  P^{\geq 2}(fp)_1&\cong (V\times_{(\csk_1 U\times_{\Csk_1\LW G}\LW G)_2}V)\times K(A,2)_2\\
  &\cong (V\times_{(\csk_1 U\times_{\Csk_1\LW G}\LW G)_2}V)\times A
\end{align*}
at the end of the proof of Theorem \ref{thm:desc}.  The target map of
$P^{\geq 2}(fp)_{\bullet}$ is given by
\begin{equation*}
  (v_2,v_3,a)\mapsto(v_2,a)
\end{equation*}
We abuse notation and denote by $\varphi$ the restriction of the map
\begin{equation*}
  \begin{xy}
    \morphism[U_3`K(A,3)_3;\varphi]
  \end{xy}
\end{equation*}
to $P^{\geq 2}(f)_1\subset U_3$.  The source in the Lie groupoid
$P^{\geq 2}(f)_{\bullet}$ is given by
\begin{equation*}
  (v_2,v_3,a) \mapsto (v_3,\varphi(v_2,v_3)+a)
\end{equation*}
The Lie $1$-groupoid structure on $P^{\geq 2}(fp)_{\bullet}$ ensures
that $\varphi$ is an $A$-valued 1-cocycle on the cover
\begin{equation*}
  \begin{xy}
    \morphism<1000,0>[V`(\csk_1 U\times_{\Csk_1\LW G}\LW G)_2;]
  \end{xy}
\end{equation*}
The orbit space
\begin{equation*}
  \begin{xy}
    \morphism<1000,0>[X_2`(\csk_1 U\times_{\Csk_1\LW G}\LW G)_2;]
  \end{xy}
\end{equation*}
is the principal $A$-bundle determined by this data.

We now describe the higher simplices.  The map
$\Lambda^k_i(\tau_2(fp))$ is an isomorphism for $k>2$ and all
$i$ (Proposition \ref{prop:tn}). For $k=3$, the data of these
isomorphisms can be reduced to a trivialization $\zeta$ of the bundle
\begin{equation*}
  \begin{xy}
    \morphism(0,0)<1500,0>[d_3^*P^{\vee}\otimes d_2^*P\otimes
    d_1^*P^{\vee}\otimes d_0^*P`(\csk_1 U\times_{\Csk_1\LW G}\LW G)_3;]
    \end{xy}
\end{equation*}
Here $d_i$ denotes the face map from $3$-simplices to $2$-simplices in the simplicial object
$(\csk_1 U\times_{\Csk_1\LW G}\LW G)_{\bullet}$, and $P^{\vee}$
denotes the dual bundle of $P$.

Proposition \ref{prop:tn} implies that $X_{\bullet}$ is determined by
its $3$-skeleton.  In the present context, this is equivalent to
requiring that $\zeta$ satisfy a pentagonal coherence condition coming
from the $1$-skeleton of the $4$-simplex.

Summing up, we see that the Lie $2$-group $X_{\bullet}$ is reducible
to the following data:
\begin{enumerate}
\item a cover $f\colon U\too G$,
\item a principal $A$-bundle $P\too(\csk_1 U\times_{\Csk_1\LW G}\LW G)_2$, and
\item a trivialization $\zeta$ of
  \begin{equation*}
    \begin{xy}
      \morphism(0,0)<1500,0>[d_3^*P^{\vee}\otimes d_2^*P\otimes d_1^*P^{\vee}\otimes d_0^*P`(\csk_1 U\times_{\Csk_1\LW G}\LW G)_3;]
    \end{xy}
  \end{equation*}
  which satisfies a pentagonal coherence condition coming from the
  1-skeleton of the 4-simplex.
\end{enumerate}
If we take $\spn$ for $G$, $U(1)$ for $A$, and a cocycle representing
the fractional first Pontrjagin class $\frac{p_1}{2}$, then the
resulting Lie $2$-group is the nerve, \textit{\`{a} la} Duskin--Zhu, of
Schommer-Pries's model for $\strn$.

The discussion above gives a construction of higher central extensions
of Lie groups.  More generally, we might consider higher
\emph{abelian} extensions.  We consider an abelian Lie group $A$ on
which the Lie group $G$ acts by automorphisms.  The data which
specifies a higher abelian extension of a Lie group, and the
construction which produces a Lie $2$-group from this data, are
analogous to the case of higher central extensions above.  We sketch
the necessary changes.

For each $n>0$, the $G$-action on $A$ induces $G$-actions on
$\W_{\bullet}K(A,n-1)$ and $K(A,n)_{\bullet}$ such that the universal
$K(A,n-1)$-bundle
\begin{equation*}
  \begin{xy}
    \morphism<925,0>[\W_{\bullet}K(A,n-1)`K(A,n)_{\bullet};]
  \end{xy}
\end{equation*}
is $G$-equivariant with respect to these actions. We define the
\emph{twisted universal bundle}
\begin{equation*}
  \begin{xy}
    \morphism<1000,0>[\W^G_{\bullet}K(A,n-1)`K^G(A,n)_{\bullet};]
  \end{xy}
\end{equation*}
by taking the homotopy quotient of the universal $K(A,n-1)$-bundle
with respect to the $G$-action.  An $A$-valued $n$-cocycle on
$\LW_{\bullet}G$ is now a commuting triangle
\begin{equation*}
  \begin{xy}
    \Vtriangle<600,300>[U_{\bullet}`K^G(A,n)_{\bullet}`\LW_{\bullet}G;``]
  \end{xy}
\end{equation*}
such that $U_{\bullet}\rightarrow\LW G_{\bullet}$ is a hypercover.
Equivalences of cocycles are defined analogously.  The analogue of
Corollary \ref{cor:coc} holds, and the construction proceeds just
as above.  The only difference is that, if we unpack the construction
of the Lie $2$-group $X_{\bullet}$ produced from this more general
notion of cocycle, we observe that instead of the $2$-simplices $X_2$
being the total space of a principal $A$-bundle, $X_2$ is now the
total space of a $G$-twisted principal $A$-bundle.  We leave the
remaining details to the interested reader.

The methods above additionally allow for the construction of Lie
$n$-groups for $n>2$.  In particular, there exists a finite
dimensional model of $\fivebrn$ as a Lie $7$-group extending $\strn$
by $K(\mathbb{Z},7)$.  We expect that a model of $\fivebrn$ also
exists as a Lie $6$-group extending $\strn$ by $K(U(1),6)$.

\bibliographystyle{amsplain}
\bibliography{master}

\providecommand{\bysame}{\leavevmode\hbox to3em{\hrulefill}\thinspace}
\providecommand{\MR}{\relax\ifhmode\unskip\space\fi MR }
% \MRhref is called by the amsart/book/proc definition of \MR.
\providecommand{\MRhref}[2]{%
  \href{http://www.ams.org/mathscinet-getitem?mr=#1}{#2}
}
\providecommand{\href}[2]{#2}
\begin{thebibliography}{10}

\bibitem{BGM:59}
M.G. Barratt, V.K.A.M. Guggenheim, and J.C. Moore, \emph{On {S}emi-{S}implicial
  {F}ibre-{B}undles}, Amer. J. Math. \textbf{81} (1959), 639--657.

\bibitem{BeG:13}
K.~Behrend and E.~Getzler, \emph{Geometric {H}igher {G}roupoids and
  {C}ategories}, arXiv:1508.02069, 2013.

\bibitem{Bek:04}
T.~Beke, \emph{Higher cech theory}, K-Theory \textbf{32} (2004), 293--322.

\bibitem{BoV:73}
J.M. Boardman and R.M. Vogt, \emph{Homotopy {I}nvariant {A}lgebraic
  {S}tructures on {T}opological {S}paces}, Springer-Verlag, 1973.

\bibitem{DHI:04}
D.~Dugger, S.~Hollander, and D.~Isaksen, \emph{Hypercovers and {S}implicial
  {P}resheaves}, Math. Proc. Cambridge Philos. Soc. \textbf{136} (2004), no.~1,
  9--51.

\bibitem{Dus:79}
J.~Duskin, \emph{Higher-{D}imensional {T}orsors and the {C}ohomology of {T}opoi
  {I}: {T}he {A}belian {T}heory}, Applications of Sheaves (Proc. Res. Sympos.
  Appl. Sheaf Theory to Logic, Algebra and Anal., Univ. Durham, Durham, 1977),
  Lecture Notes in Mathematics, vol. 753, Springer, 1979, pp.~255--279.

\bibitem{Dus:02}
\bysame, \emph{Simplicial {M}atrices and the {N}erves of {W}eak n-{C}ategories
  {I} - {N}erves of {B}icategories}, Theory Appl. Categ. \textbf{9} (2002),
  198--308.

\bibitem{EiM:53}
S.~Eilenberg and S.~MacLane, \emph{On the {G}roups ${H}(\pi,n)$ {I}}, Ann. of
  Math. (2) \textbf{58} (1953), no.~1, 55--106.

\bibitem{Get:02}
E.~Getzler, \emph{A {D}arboux {T}heorem for {H}amiltonian {O}perators in the
  {F}ormal {C}alculus of {V}ariations}, Duke Math. J. \textbf{111} (2002),
  535--560.

\bibitem{Get:09}
\bysame, \emph{Lie {T}heory for {N}ilpotent ${L}_{\infty}$-{A}lgebras}, Ann. of
  Math. (2) \textbf{170} (2009), no.~1, 271--301.

\bibitem{Gle:82}
P.~Glenn, \emph{Realization of {C}ohomology in {A}rbitrary {E}xact
  {C}ategories}, J. Pure Appl. Algebra \textbf{25} (1982), no.~1, 33--105.

\bibitem{Hen:08}
A.~Henriques, \emph{Integrating ${L}_{\infty}$-{A}lgebras}, Compos. Math.
  \textbf{144} (2008), no.~4, 1017--1045.

\bibitem{Ill:72}
L.~Illusie, \emph{Complexe {C}otangent et {D}eformations {II}}, Lecture Notes
  in Mathematics, vol. 283, Springer-Verlag, 1972.

\bibitem{Joh:77}
P.T. Johnstone, \emph{Topos theory}, Academic Press, 1977.

\bibitem{Joy:02}
A.~Joyal, \emph{Quasi-{C}ategories and {K}an {C}omplexes}, Journal of Pure and
  Applied Algebra \textbf{175} (2002), 207--222.

\bibitem{Lur:09}
J.~Lurie, \emph{Higher {T}opos {T}heory}, Annals of Mathematics Studies,
  Princeton University Press, 2009.

\bibitem{NSS:15}
T.~Nikolaus, U.~Schreiber, and D.~Stevenson, \emph{Principal $\infty$-bundles:
  presentations}, Journal of Homotopy and Related Structures \textbf{10}
  (2015), 565–622.

\bibitem{Pri:13}
J.P. Pridham, \emph{Presenting {H}igher {S}tacks as {S}implicial {S}chemes},
  Adv. Math. \textbf{238} (2013), 184--245.

\bibitem{Sch:11}
C.~Schommer-Pries, \emph{Central {E}xtensions of {S}mooth 2-{G}roups and a
  {F}inite {D}imensional {S}tring 2-{G}roup}, Geom. Topol. \textbf{15} (2011),
  no.~2, 609--676.

\bibitem{Ser:64}
J.P. Serre, \emph{Lie {A}lgebras and {L}ie {G}roups, 1964 {L}ectures {G}iven at
  {H}arvard {U}niversity}, 2 ed., Lecture Notes in Mathematics, vol. 1500,
  Springer-Verlage, 1992.

\bibitem{Zhu:09}
C.~Zhu, \emph{n-{G}roupoids and {S}tacky {G}roupoids}, Int. Math. Res. Not.
  (2009), no.~21, 4087--4141.

\end{thebibliography}

\end{document}